\documentclass[a4paper,11pt]{amsart}
\usepackage[utf8]{inputenc}
\usepackage[english]{babel}
\usepackage{comment}

\newcommand{\Hepi}{\mathrm{Hepi}}

\DeclareMathOperator{\Lan}{\mathrm{Lan}}
\DeclareMathOperator{\colim}{\mathrm{colim}}
\DeclareMathOperator{\HH}{\mathrm{HH}}
\DeclareMathOperator{\cHH}{\mathcal{HH}}
\DeclareMathOperator{\Hom}{\mathrm{Hom}}

\DeclareMathOperator{\Ext}{\mathrm{Ext}}
\DeclareMathOperator{\Tw}{\mathrm{Tw}}
\DeclareMathOperator{\Tot}{\mathrm{Tot}}
\newcommand{\GS}{\mathrm{GS}}


\usepackage[margin=2.5cm]{geometry}
\usepackage{enumerate}
\usepackage{float}
\usepackage[final]{graphicx}
\usepackage{epstopdf} 

\usepackage{tabularx}
\newcolumntype{C}[1]{>{\centering\arraybackslash}m{#1}}

\usepackage{enumitem}
\usepackage{comment}
\usepackage{soul}

\usepackage{tikz-cd}
\usepackage{amsmath, multirow}
\usepackage{amsfonts}
\usepackage{amssymb}
\usepackage{amsthm}
\usepackage{hyperref}
\usepackage{cleveref}


\usepackage[T1]{fontenc}
\usepackage{mathrsfs}
\usepackage{mathtools}

\usepackage{tikz}
\usetikzlibrary{decorations.markings}
\usetikzlibrary{decorations.pathreplacing}
\usetikzlibrary{arrows,shapes,positioning,patterns}
\usetikzlibrary{knots}
\tikzstyle{none}=[inner sep=0pt]
\pgfdeclarelayer{edgelayer}
\pgfdeclarelayer{nodelayer}
\pgfsetlayers{edgelayer,nodelayer,main}
\usepackage{url} 
\usepackage[all]{xy}

\newcommand{\op}{\mathrm{op}}

\newcommand{\Ab}{\mathbf{Ab}}

\newcommand{\bA}{\mathbf{A}}

\newcommand{\bD}{\mathbf{D}}

\newcommand{\cC}{\mathcal{C}}

\newcommand{\cA}{\mathcal{A}}
\newcommand{\cF}{\mathcal{F}}

\newcommand{\cM}{\mathcal{M}}

\newcommand{\Nerve}{\mathrm{N}}

\newcommand{\bR}{\mathbb R}
\newcommand{\bQ}{\mathbb Q}
\newcommand{\N}{\mathbb N}

\newcommand{\bC}{\mathbf C}

\newcommand{\bZ}{\mathbb Z}

\renewcommand{\H}{\mathrm{H}}

\newcommand{\Mod}{\mathbf{Mod}}
\newcommand{\Alg}{\mathbf{Alg}}

\newcommand{\Tor}{\rm Tor}

\theoremstyle{plain}
\newtheorem{thm}{Theorem}[section]
\newtheorem{prop}[thm]{Proposition}
\newtheorem{lemma}[thm]{Lemma}
\newtheorem{cor}[thm]{Corollary}
\theoremstyle{remark}
\newtheorem{rem}[thm]{Remark}

\newtheorem{example}[thm]{Example} 
\newtheorem{q}[thm]{Question} 
\newtheorem{notation}[thm]{Notation} 
 
\theoremstyle{definition}
\newtheorem{defn}[thm]{Definition}

\newtheorem*{crit*}{Criterion~B}

\definecolor{aquamarine}{rgb}{0.5, 1.0, 0.83}
\definecolor{princetonorange}{rgb}{1.0, 0.56, 0.0}
\definecolor{caribbeangreen}{rgb}{0.0, 0.8, 0.6}
\definecolor{bunired}{rgb}{0.8, 0.0, 0.0}
\definecolor{cdgreen}{rgb}{0.0, 0.42, 0.24}
\definecolor{lavender(floral)}{rgb}{0.71, 0.49, 0.86}
\definecolor{bluedefrance}{rgb}{0.19, 0.55, 0.91}
\definecolor{iris}{rgb}{0.35, 0.31, 0.81}
\definecolor{darkgreen}{rgb}{0.33, 0.42, 0.18}

\newcommand{\tG}{{\tt G}}

\title{Diagrammatic Hochschild cohomology via cohomology of categories, and incidence algebras}
\author{Luigi Caputi}
\author{Francesco Vaccarino }


\begin{document}

\begin{abstract}
In this paper, we study the Hochschild cohomology of diagrams of algebras introduced by Gerstenhaber and Schack and provide computations for filtrations of incidence algebras. Our aims are threefold: firstly, we revisit and explore the connection between the Gerstenhaber-Schack complexes and the Baues-Wirsching cohomology of categories. 
Secondly, we analyse the behaviour of (diagrammatic) Hochschild cohomology in the context of homological epimorphisms of algebras. Thirdly, we 
study diagrams of incidence algebras. In particular, as a main application, we compute the diagrammatic Hochschild cohomology of diagrams of incidence algebras associated to finite filtrations of simplicial complexes. 
\end{abstract}

\maketitle

\section{Introduction}

Hochschild cohomology plays a central role in the study of algebraic structures, capturing both algebraic and homotopical properties. It has been extensively studied in contexts ranging from representation theory to algebraic geometry and topology; see~\cite{loday,witherspoon2019hochschild} for comprehensive introductions. In this work, we focus on diagrammatic Hochschild cohomology, an extension of the classical Hochschild cohomology developed through the framework introduced by Gerstenhaber and Schack~\cite{zbMATH03865538, GERSTENHABER1983143, MR0981619, MR0965749}. The diagrammatic approach, which is also closely related to deformations of algebras, generalizes cohomological computations from algebras to diagrams of algebras (that is, presheaves of algebras indexed by a small category), thereby enabling insights into their interrelationships. As an intriguing example of application, it was shown in~\cite{GERSTENHABER1983143} --  see also~\cite{zbMATH04118524, zbMATH06676099} -- that simplicial cohomology is Hochschild cohomology. Since the seminal works of Gerstenhaber and Schack, diagrammatic Hochschild cohomology has generated increasing interest, and it has been studied and generalized to various contexts; see, among others, eg.~\cite{zbMATH02153391,zbMATH05863900,zbMATH06111448, zbMATH06893235, zbMATH07529817,zbMATH07707528}.
Computations of diagrammatic Hochschild cohomology are generally not easy to handle. To enhance the available computational tools and facilitate applications, this paper utilizes the cohomologies of categories, specifically the Baues-Wirsching cohomology~\cite{BAUES1985187}. We start by revising and highlighting the connection between diagrammatic Hochschild cohomology and Baues-Wirsching cohomology by means of a suitable spectral sequence -- which, to the authors' knowledge, was first due to Robinson~\cite{robinson2008cohomology}. 

 In this paper, for a presheaf $\cA\colon \bC^\op\to \Alg_k$ of $k$-algebras on a finite category $\bC$, we analyse the second page of the aforementioned spectral sequence~\cite{robinson2008cohomology}, which converges to the diagrammatic Hochschild cohomology $\HH_{\GS}^*(\cA,\cA)$ of Gerstenhaber and Schack. It is, in fact, implicit in Gerstenhaber-Schack's definition that diagrammatic Hochschild cohomology can be seen as a  (generalization of a) local cohomology for a twisted system of coefficients over
the nerve of the indexing category -- see also~\cite{robinson2008cohomology} and \cite{zbMATH05364854} for related considerations. This fact can be formalized using the language of natural systems introduced by Baues and ~\cite{BAUES1985187}. Although Hochschild cohomology is not fully functorial, 
diagrammatic Hochschild cohomology yields a natural system $\cHH^q(\cA,\cA)\colon \Tw\bC\to \mathbf{Ab}$, that is, a functor on the twisted arrow category~$\Tw \bC$ (see also Proposition~\ref{lemma:natsystHH}). Stemming from these considerations,   the
main objectives of this paper are threefold: first, to review and refine the foundational results linking Gerstenhaber-Schack complexes to Baues-Wirsching cohomology of categories; second, to study the behavior of Hochschild cohomology in the special case of homological epimorphisms of algebras; and third, to apply these results to the context of filtrations of incidence algebras of simplicial complexes. 

The relation between diagrammatic Hochschild cohomology and Baues-Wirshing cohomology  can be summarized as follows -- see~~\cite{robinson2008cohomology} and Theorem~\ref{thm:spseqtoHH}:

\begin{thm}\label{thm:intro1}
    Let $\bC$ be a finite category and $\cA$  a presheaf of $k$-algebras on $\bC$ over a field~$k$.
    Then, there is a first quadrant spectral sequence:
    \[
E_2^{p,*}\cong H^p_{BW}(\bC, \cHH^*(\cA,\cA)) 
    \]
    converging to the  diagrammatic Hochschild cohomology of the diagram $\cA$, whose rows are the Baues-Wirshing cohomology groups of $\bC$ with coefficients in the natural system $\cHH^*(\cA,\cA)$. 
\end{thm}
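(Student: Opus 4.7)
The plan is to realise this as the spectral sequence associated to the Gerstenhaber-Schack bicomplex, filtered by the nerve (simplicial) direction. Recall that for a presheaf $\cA\colon\bC^\op\to\Alg_k$ the GS bicomplex has
\[
C^{p,q}_{\GS}(\cA,\cA) \;=\; \prod_{\sigma\in N_p\bC} C^q\bigl(\cA(c_p),\cA(c_0)\bigr),
\]
where the product ranges over $p$-chains $\sigma=(c_0\xrightarrow{f_1}c_1\to\cdots\xrightarrow{f_p}c_p)$ in $\bC$, and $\cA(c_0)$ is viewed as an $\cA(c_p)$-bimodule via the composed presheaf map $\cA(c_p)\to\cA(c_0)$. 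The vertical differential is the factor-wise Hochschild coboundary; the horizontal differential is an alternating sum of nerve face operators, where the interior faces compose adjacent morphisms of $\sigma$ and the two extremal faces additionally post-compose (respectively pre-compose) cochains with the algebra maps $\cA(f_1)$ and $\cA(f_p)$. By construction, the totalisation of this bicomplex computes $\HH^*_{\GS}(\cA,\cA)$.

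Then I would consider the spectral sequence obtained by filtering by the nerve degree~$p$. Since $\bC$ is finite and products of $k$-modules are exact, vertical cohomology commutes with the product, yielding
\[
E_1^{p,q} \;\cong\; \prod_{\sigma\in N_p\bC} \HH^q\bigl(\cA(c_p),\cA(c_0)\bigr).
\]
By Proposition~\ref{lemma:natsystHH}, the assignment sending a morphism $f\colon c\to c'$ of $\bC$ to $\HH^q(\cA(c'),\cA(c))$ assembles into a natural system $\cHH^q(\cA,\cA)\colon\Tw\bC\to\Ab$. Hence $E_1^{p,q}$ agrees, as a graded abelian group, with the Baues-Wirsching cochain group in degree $p$ with coefficients in the natural system $\cHH^q(\cA,\cA)$.

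The key step is to verify that the induced horizontal differential $d_1$ coincides with the Baues-Wirsching coboundary for $\cHH^q(\cA,\cA)$. For the interior face maps, composition of consecutive arrows of $\sigma$ does not alter the Hochschild cochain complex, so the induced map on cohomology is the identity, matching the trivial endpoint-free actions in the BW formula for $d_i$ with $1\leq i\leq p$. For the two extremal face operators, the horizontal differential of the GS complex post- and pre-composes cochains with the algebra maps associated to the first and last morphism of $\sigma$; by Proposition~\ref{lemma:natsystHH} this is exactly the $\Tw\bC$-action of the natural system on the endpoints. Matching signs is then routine and yields $E_2^{p,q}\cong H^p_{BW}(\bC,\cHH^q(\cA,\cA))$. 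Convergence to $\HH^*_{\GS}(\cA,\cA)$ follows from the first-quadrant structure, since the filtration is bounded and exhaustive in each total degree.

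The hard part is precisely this identification at the $E_1\Rightarrow E_2$ stage: one has to confirm that the endpoint face operators of the GS bicomplex encode exactly the left and right natural-system actions of $\cHH^q(\cA,\cA)$, with compatible signs. Once the natural system structure is established via Proposition~\ref{lemma:natsystHH}, this amounts to direct bookkeeping; the construction of the spectral sequence and its convergence are then standard.
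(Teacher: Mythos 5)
Your proposal is correct and follows essentially the same route as the paper: filter the Gerstenhaber--Schack bicomplex by nerve degree, take vertical (Hochschild) cohomology first, identify the resulting $E_1$-rows with the Baues--Wirsching cochain complex of $\bC$ with coefficients in the natural system $\cHH^q(\cA,\cA)$ (the paper does this at the cochain level in Proposition~\ref{prop:agreementchain} before passing to cohomology, which is the same bookkeeping of face operators and natural-system actions you describe), and conclude convergence from boundedness of the first-quadrant double complex.
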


The isomorphisms in Theorem~\ref{thm:intro1} already have some interesting advantages. For example, assume that $\bC$ is a free finite category. Then,  Theorem~\ref{thm:intro1} implies that the second page of the considered spectral sequence is trivial when $p\geq 2$ -- cf.~Corollary~\ref{cor:highercolumns}. As a consequence,   the whole information of the spectral sequence is contained in the columns of the second page corresponding to $p=0,1$.  
To further simplify the computations on the second page of the spectral sequence, this paper restricts to the case of homological epimorphisms of algebras~\cite{GEIGLE1991273, 10.21099/tkbjm/1496165029}. 
The reason why we focus on homological epimorphisms, rather than more general algebra maps, stems from the observation that Hochschild cohomology (which is generally not functorial in both variables) yields a functor when restricted to the category $\Alg^{\Hepi}_k$ of algebras and homological epimorphisms. As a consequence, when restricting to diagrams of algebras and homological epimorphisms, Theorem~\ref{thm:intro1} can be refined:  in Corollary~\ref{cor:ssseqwithlims} we show that the rows $E_2^{*,q}$ are isomorphic to the functor cohomology groups of the category $\bC$ (rather than of the twisted arrow category~$\Tw\bC$). As a direct consequence, we get the following (see also  Corollary~\ref{cor:spseqterminal}):

\begin{thm}
    Let $\bC$ be a category with a terminal object $\bar{c}$, and $\cA$  a presheaf of $k$-algebras on $\bC$ which is a surjective homological epimorphism.
    Then, for all $p,q \in \N$, we have 
    \[
    E_2^{p,q}=
    \begin{cases}
    0 &\text{ if } p>0\\
    \HH^q(\cA(\bar{c}),\cA(\bar{c})) &\text{ otherwise} 
    \end{cases}
    \]
    In particular, diagrammatic Hochschild cohomology of $\cA$ is the Hochschild cohomology of $\cA(\bar{c})$.
\end{thm}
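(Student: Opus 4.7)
The plan is to combine the refined spectral sequence of Corollary~\ref{cor:ssseqwithlims} with the classical vanishing of higher derived limits over a category with a terminal object. The key simplification afforded by the homological-epimorphism hypothesis is precisely that, on the $E_2$-page, Baues--Wirsching cohomology of $\Tw\bC$ collapses to ordinary functor cohomology of $\bC$; once this identification is in place, the terminal object should collapse the second page to a single column, forcing the spectral sequence to degenerate.

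More precisely, I would first apply Corollary~\ref{cor:ssseqwithlims} to obtain an isomorphism
$$E_2^{p,q}\cong H^p(\bC,\HH^q(\cA(-),\cA(-))),$$
where the right-hand side denotes the $p$-th right-derived functor of $\lim_{\bC}$ applied to the functor $c\mapsto \HH^q(\cA(c),\cA(c))$. I would then invoke the standard fact that when $\bC$ has a terminal object $\bar c$, the evaluation functor $F\mapsto F(\bar c)$ is exact and naturally isomorphic to $\lim_{\bC}$; hence $H^p(\bC,F)=0$ for every $p\geq 1$ and every $F\colon\bC\to \mathbf{Ab}$, while $H^0(\bC,F)=F(\bar c)$. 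Applied to $F=\HH^q(\cA(-),\cA(-))$, this yields the stated formula for $E_2^{p,q}$.

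Finally, since the entire $E_2$-page is concentrated in the column $p=0$, the spectral sequence degenerates, and the edge map provides an isomorphism
$$\HH^n_{\GS}(\cA,\cA)\cong E_\infty^{0,n}\cong E_2^{0,n}\cong \HH^n(\cA(\bar c),\cA(\bar c)),$$
which is precisely the last assertion. The real content of the argument sits in the preliminary step already handled by Corollary~\ref{cor:ssseqwithlims}: under the homological-epimorphism hypothesis one must check that the natural system $\cHH^q(\cA,\cA)$ factors through $\bC$ rather than through $\Tw\bC$, so that Baues--Wirsching cohomology can be replaced by ordinary derived-limit cohomology. Once that reduction is granted, the terminal-object argument is classical and the degeneration of the spectral sequence is immediate, so I do not expect any further obstacles.
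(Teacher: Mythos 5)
Your proof is correct and follows essentially the same route as the paper: Corollary~\ref{cor:ssseqwithlims} identifies $E_2^{p,q}$ with the higher limits $\mathrm{lim}^p_{\bC}\HH^q(\cA(-),\cA(-))$, and the terminal-object vanishing of Remark~\ref{rem:fucntorterminal} kills them in positive degrees and evaluates the degree-zero term at $\bar c$, after which degeneration is immediate. One small caution: the vanishing statement you quote should be phrased for \emph{contravariant} functors on a category with a terminal object (equivalently, covariant functors on a category with an initial object) --- as stated for arbitrary covariant $F\colon\bC\to\Ab$ it is false (limits over a cospan are pullbacks, not evaluation at the terminal object) --- but since $\HH^q(\cA(-),\cA(-))$ is a presheaf on $\bC$, this is exactly the case you need.
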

In the final section of the paper, we focus on our primary application: the computation of diagrammatic Hochschild cohomology for filtrations of incidence algebras. This application might also be of independent interest in persistent homology computations~\cite{carlssondata}. In fact,  Gerstenhaber and Schack~\cite{GERSTENHABER1983143} prove that, for a simplicial complex~$\Sigma$, we have $\H^*(\Sigma;k)\cong \HH^*(I(\Sigma),I(\Sigma))$, where $I(\Sigma)\coloneqq I((\Sigma,\subseteq))$ is the incidence algebra of the face poset associated to $\Sigma$. Therefore, a filtration of simplicial complexes can be turned into a filtration of incidence algebras, preserving the cohomology groups. As simplicial injective maps of simplicial complexes induce homological epimorphisms of incidence algebras, which are also epimorphisms, we can apply the developed theory to this case. Then, we show the following:
\begin{thm}
    Let $\mathcal{I}\colon [n]^{\op}\to \mathbf{Alg}_k$ be induced by a filtration of finite simplicial complexes, via $\mathcal{I}(i)\coloneqq I(\Sigma_i)$ for $0\leq i\leq n$, and restriction maps. Then, we have
    \[
    \HH_{\GS}^q(\mathcal{I}, \mathcal{I})\cong \H^q(\Sigma_n;k)
    \]
    for all $q\geq 0$. 
\end{thm}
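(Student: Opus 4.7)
The plan is to deduce the theorem as a direct application of the terminal-object reduction (Corollary~\ref{cor:spseqterminal}) combined with the classical Gerstenhaber-Schack identification between the Hochschild cohomology of the incidence algebra of a simplicial complex and its simplicial cohomology. Concretely, I view $\mathcal{I}$ as a presheaf of algebras on $\bC=[n]$: in this reading, $[n]$ (with the usual order $0\le 1\le \cdots\le n$) is the indexing category, which carries $n$ as a terminal object, and $\mathcal{I}\colon [n]^{\op}\to \Alg_k$ is the contravariant functor sending $i\le j$ to the restriction map $I(\Sigma_j)\to I(\Sigma_i)$ induced by $\Sigma_i\hookrightarrow \Sigma_j$; in particular $\mathcal{I}(n)=I(\Sigma_n)$.

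Next, I would verify the hypotheses of Corollary~\ref{cor:spseqterminal}. Surjectivity of each restriction $I(\Sigma_j)\to I(\Sigma_i)$ is immediate from the basis description of incidence algebras in terms of chains in the face poset. The key property, namely that each such restriction is a homological epimorphism, is the input established earlier in the paper for incidence algebras: simplicial inclusions of subcomplexes induce homological epimorphisms on the corresponding incidence algebras. With both properties in hand, $\mathcal{I}$ is a diagram of surjective homological epimorphisms indexed by a category with a terminal object.

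Applying Corollary~\ref{cor:spseqterminal} with $\bC=[n]$ and $\bar c=n$ then yields
\[
\HH_{\GS}^q(\mathcal{I},\mathcal{I}) \;\cong\; \HH^q(\mathcal{I}(n),\mathcal{I}(n)) \;=\; \HH^q(I(\Sigma_n),I(\Sigma_n)),
\]
for every $q\ge 0$. Combining this with the Gerstenhaber-Schack formula $\HH^q(I(\Sigma),I(\Sigma))\cong \H^q(\Sigma;k)$ recalled in the introduction, applied to $\Sigma=\Sigma_n$, produces $\HH_{\GS}^q(\mathcal{I},\mathcal{I})\cong \H^q(\Sigma_n;k)$, as claimed.

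The only substantive step is the verification that restriction maps of incidence algebras arising from simplicial inclusions are homological epimorphisms; once this is supplied by the preparatory results on incidence algebras, the argument is a formal assembly of Corollary~\ref{cor:spseqterminal} with the classical Gerstenhaber-Schack computation. I therefore expect the main obstacle to reside there -- presumably handled by exhibiting a natural resolution of $I(\Sigma_i)$ as an $I(\Sigma_j)$-bimodule, or by directly checking the vanishing $\Tor_{>0}^{I(\Sigma_j)}(I(\Sigma_i),I(\Sigma_i))=0$ from the combinatorics of the inclusion $\Sigma_i\hookrightarrow \Sigma_j$ -- while the remainder of the proof is entirely formal.
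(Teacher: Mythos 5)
Your proposal is correct and follows essentially the same route as the paper: verify that the restriction maps $I(\Sigma_j)\to I(\Sigma_i)$ are surjective homological epimorphisms (Proposition~\ref{prop:homepiincalg}), apply the terminal-object collapse of the spectral sequence (Corollary~\ref{cor:spseqterminal}, which the paper invokes via Corollary~\ref{cor:ssseqwithlims}), and conclude with the Gerstenhaber--Schack identification $\HH^*(I(\Sigma_n),I(\Sigma_n))\cong \H^*(\Sigma_n;k)$ of Theorem~\ref{thm:HHis SC}. The substantive input you flag at the end is indeed handled in the paper by showing the kernel of each restriction is an idempotent projective ideal (Lemma~\ref{lemma:homideals}).
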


To summarize, in this paper, we study a spectral sequence that converges to diagrammatic Hochschild cohomology, explicitly describing the cohomology groups in terms of higher limits of functors over twisted arrow categories. Then, we simplify the computation of the second page of such a spectral sequence using homological epimorphisms of algebras. Finally, we apply these computations to the case of filtrations of incidence algebras, revealing deep connections to combinatorial and topological invariants. This interplay between theory and application not only generalizes existing results but also paves the way for future developments in applied algebraic topology. 

\subsection*{Organization of the paper}
Section~\ref{section:basics} introduces classical notions of Hochschild cohomology and functor (co-)homology. Section~\ref{section:diagrammaticHH} develops the diagrammatic perspective via the Gerstenhaber-Schack framework and its reinterpretation through natural systems. Section~\ref{section:homologicalepi} explores diagrams of homological epimorphisms and their implications in the computation of Hochschild cohomology. Section~\ref{sectionapplications} applies these results to incidence algebras, highlighting their relevance to persistent homology. 

\subsection*{Conventions}

Throughout the paper, if not otherwise specified, $k$ shall denote a field. All $k$-algebras are finite-dimensional, and, unless otherwise specified, all categories are finite.

\section{Preliminary notions}
\label{section:basics}

In this section, to set the notations and basic notions needed in the follow-up, we first recall the definition of Hochschild cohomology (see, eg.~\cite{loday}, for a more detailed account), the base-change spectral sequence~\cite[Chapter~XVI.5]{cartanEilenberg} together with its role in Hochschild cohomology computations, and then we proceed by reviewing the notion of functor (co-)homology of categories~\cite{Gabriel1967CalculusOF}. 

\subsection{Hochschild cohomology}

Let $k$ be a field, $A$ an associative $k$-algebra and $M$ a $A$-bimodule. Consider the cochain complex consisting of the cochains
\begin{equation}\label{eq:HH}
    C_{\HH}^n(A,M)\coloneqq \{ f\colon \prod_n A\to M \mid f \text{ is $n$-multilinear} \} = \Hom_k(A^{\otimes n},M) \ ,
\end{equation}
equipped with the Hochschild differentials $d_{\HH}^n\colon C_{\HH}^{n-1}(A,M)\to C_{\HH}^{n}(A,M)$ for each $n>0$. The differentials~$d_{\HH}^n$ are defined by the formula:
\[
\begin{split}
    d_{\HH}^n f(a_1,\dots,a_{n})\coloneqq & a_1 f(a_2,\dots, a_{n})  +\\
    & \sum_{0<i<n}(-1)^i f(a_1,\dots, a_ia_{i+1},\dots,a_{n})+\\
    & (-1)^{n} f(a_1,\dots,a_{n-1})a_{n} \ ,
\end{split}
\]
with $d_{\HH}^0$ to be the zero map.
The  cohomology groups $\HH^n(A,M)\coloneqq \ker d_{\HH}^{n+1}/\mathrm{Im} \; d_{\HH}^{n}$ 
associated to the cochain complex $(C^*_{\HH}(A,M), d_{\HH})$  are called the Hochschild cohomology groups of $A$ with coefficients in $M$. If $M=A$, we write $\HH^*(A)$ for the Hochschild cohomology of $A$ with coefficients in $A$ (seen as a bimodule over itself). In degree $0$, the Hochschild cohomology group $\HH^0(A)$ yields the center of the $k$-algebra $A$, and in degree~$1$ it is related to the so-called K\"ahler differentials of $A$~\cite{loday}. Observe also  that Hochschild cohomology is graded with grading $\HH^*(A,M) =\bigoplus_{n\geq 0}\HH^n(A,M)$.

In the assumption that~$k$ is a field (but, more generally, if $A$ is a projective $R$-module over a commutative unital ring~$R$), it is standard to interpret Hochschild cohomology groups in terms of Ext-functors -- see, e.g.~\cite [Chapter~IX.6]{cartanEilenberg} or \cite[Corollary~9.1.5]{Weibel}. With this interpretation at hand, we have 
\begin{equation}\label{eq:HHExt}
\HH^n(A,M)\cong\Ext^n_{A^e}(A,M) \  ,    
\end{equation}
where $A^e\coloneqq A\otimes A^{\op}$ is the enveloping algebra of $A$.  The interpretation of Hochschild cohomology in terms of Ext-groups allows one to use, in computations, any projective resolution of $A$ as an $A^e$-module.

   \begin{example}\label{ex:Aproj}
       Let $A$ be an $A^e$-projective module. Then, $\HH^n(A,M)=0$ for all $n>0$ and all $A$-bimodules $M$.  
   \end{example}

   Example~\ref{ex:Aproj} can be improved. In fact,  if $\HH^n(A,M)=0$ for all $n>0$ and all $A$-bimodules~$M$, then $A$ is $A^e$-projective and, equivalently, $A$ is a separable $k$-algebra. 

\begin{example}
    Let  $A=M_p(k)$ be the $k$-algebra of $p\times p $ matrices over the field $k$. Then, $A$ is $A^e$-projective, and, by Example~\ref{ex:Aproj}, its Hochschild cohomology groups are all trivial in positive degrees. 
\end{example}
   
   Computations of Hochschild cohomology groups for non-separable algebras can generally be more difficult, and one can easily get non-trivial Hochschild cohomology groups in any positive degree. For example, a direct computation shows the following:

   \begin{example}
       Let $k=\bR$ and $A=\bR[x]/(x^m)$. Then, we have
\[
\HH^n(A)=
\begin{cases}
    A & \text{if } n=0\\
    A/(mx^{m-1}) & \text{if } n>0
\end{cases}
\]
One can generalize this example to any field $k$, and any monic polynomial $f$ in $k[x]$ -- cf.~\cite{zbMATH00028035, redondo}. 
   \end{example}

Hochschild cohomology of algebras is not well-behaved with respect to morphisms of pairs of algebras and bimodules. 
   \begin{rem}\label{rem:functo}
   Hochschild cohomology does not yield a bifunctor $(A,M)\mapsto  \HH^*(A,M)$ on the category of pairs of $k$-algebras and $ k$-algebra morphisms, and bimodules. However, it satisfies some limited functoriality, as we now describe. First, observe that Hochschild cohomology is covariant with respect to the second argument. On the other hand, if $M$ is a $A$-bimodule, then a morphism $\phi\colon B\to A$ of $k$-algebras induces a $B$-bimodule structure on~$M$. Then, restriction of scalars yields a map $\phi^*\colon \HH^*(A,M)\to \HH^*(B,M)$ between the Hochschild cohomology groups~\cite[Section~1.5]{loday}; note that $\phi^*$ is induced by the cochain map $C^*_{\HH}(A,M)\to C^*_{\HH}(B,M)$, and Hochschild cohomology is contravariant in the first argument. This latter property implies that Hochschild cohomology does not generally extend to a functor $A\mapsto \HH^*(A,A)$ on the whole category of $k$-algebras and $k$-algebra morphisms. \end{rem}

   \begin{rem}
       Although Hochschild cohomology is not generally functorial, functoriality holds for an important class of $k$-algebras; for example, restricting to the category of symmetric Frobenius algebras yields a contravariant functor (in Hochschild cohomology)~\cite{MR3219576}.
\end{rem}
   
\subsection{The base-change spectral sequence} 
Let $\Mod_R$ be the category of $R$-modules over a ring~$R$, and $\mathbf{Ab}$ the category of Abelian groups. If $R\to S$ is a ring homomorphism and $N$ is an $S$-module, there are natural functors $(-)\otimes_R S \colon \Mod_R\to \Mod_S$ and $\Hom_S(-,N)\colon \Mod_S\to \mathbf{Ab}$, and   a spectral sequence called the \emph{base-change spectral sequence}: 

    \begin{thm}[{\cite[Chapter~XVI.5]{cartanEilenberg}}]\label{thm:change_ring_ss}
        Let $\phi\colon A\to B$ be a morphism of $k$-algebras. 
        Let $M$ be a left $A$-module and $N$ a left $B$-module (considered also as $A$-module via $\phi$). Then, there is a spectral sequence of cohomological type
        \[
       E^{p,q}_2= \Ext^p_B(\Tor^A_q(B,M),N)\Rightarrow \Ext^{p+q}_A(M,N)
        \]
        converging to $\Ext^*_A(M,N)$, naturally in both $M$ and $N$. 
    \end{thm}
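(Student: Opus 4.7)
The plan is to derive this as a hypercohomology-type spectral sequence obtained by extending scalars on a projective resolution. The key ingredient is the extension--restriction of scalars adjunction: for any $A$-module $M'$ and $B$-module $N$ (regarded also as an $A$-module via $\phi$), there is a natural isomorphism $\Hom_A(M', N) \cong \Hom_B(B \otimes_A M', N)$. Picking a projective resolution $P_\bullet \to M$ of $M$ over $A$, this adjunction gives a natural isomorphism of cochain complexes $\Hom_A(P_\bullet, N) \cong \Hom_B(B \otimes_A P_\bullet, N)$, and hence $\Ext^n_A(M, N) \cong H^n(\Hom_B(B \otimes_A P_\bullet, N))$.

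The complex $X_\bullet \coloneqq B \otimes_A P_\bullet$ consists of projective $B$-modules (because $B \otimes_A (-)$ carries summands of free $A$-modules to summands of free $B$-modules), and $H_q(X_\bullet) = \Tor^A_q(B, M)$ by definition. The task thus reduces to computing $H^*(\Hom_B(X_\bullet, N))$ in terms of the homology of $X_\bullet$ and the derived functors $\Ext^*_B(-, N)$. To this end, I would build a Cartan-Eilenberg projective resolution $Q_{\bullet,\bullet} \to X_\bullet$ in $\Mod_B$, form the (appropriately indexed) double complex $\Hom_B(Q_{\bullet,\bullet}, N)$, and study its two filtration spectral sequences. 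One filtration yields $E_2^{p,q} = \Ext^p_B(H_q(X_\bullet), N) = \Ext^p_B(\Tor^A_q(B, M), N)$; the other collapses, because each $X_i$ is already projective over $B$, onto $H^*(\Hom_B(X_\bullet, N)) \cong \Ext^*_A(M, N)$. Both abut to the cohomology of the total complex, producing the announced convergence.

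The main obstacles are technical rather than conceptual: constructing the Cartan-Eilenberg resolution, verifying the collapse of the second spectral sequence, and confirming naturality in both $M$ and $N$. Naturality is inherited from the naturality of the adjunction together with the functoriality (up to chain homotopy) of projective and Cartan-Eilenberg resolutions, which is enough to make the spectral sequence natural from the $E_2$-page onward. Alternatively, one can bypass the explicit double complex by invoking the Grothendieck composite-functor spectral sequence applied to $F = B \otimes_A (-)$ (with left derived functors $\Tor^A_q(B, -)$) and $G = \Hom_B(-, N)$ (with right derived functors $\Ext^p_B(-, N)$); the required acyclicity hypothesis -- that $F$ sends projectives to $G$-acyclic modules -- again follows from the fact that $B \otimes_A P$ is $B$-projective whenever $P$ is $A$-projective.
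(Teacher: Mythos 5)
Your proposal is correct and follows essentially the same route as the paper, which gives no independent proof but cites Cartan--Eilenberg XVI.5 and identifies the spectral sequence as the composite-functor spectral sequence for $-\otimes_A B$ followed by $\Hom_B(-,N)$; your explicit hyper-$\Ext$ construction (Cartan--Eilenberg resolution of $B\otimes_A P_\bullet$, collapse of one filtration via $B$-projectivity of the extended modules, identification of the abutment via the adjunction $\Hom_B(B\otimes_A -,N)\cong\Hom_A(-,N)$) is precisely the argument of that reference. The only cosmetic caveat is that the ``Grothendieck spectral sequence'' here is the mixed-variance version (left derived functors composed with right derived functors of a contravariant functor), which your double-complex argument handles correctly and which is the form actually treated in the cited source.
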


For a morphism $\phi\colon A\to B$ of $k$-algebras, and $N$ a given  $B$-module, the base-change spectral sequence of Theorem~\ref{thm:change_ring_ss} is the Grothendieck spectral sequence associated to the functors $\Phi=-\otimes_A B$ and $\Theta= \Hom_B(-,N)$, and it is natural in both the $A$-modules $M$ and the $B$-modules $N$.
We also refer to~\cite[Section~2]{suárezalvarez2007applications} for a more detailed description of its construction. 
    As a corollary of Theorem~\ref{thm:change_ring_ss}, we get a  spectral sequence converging to Hochschild cohomology (see also~\cite[Section~2.3.1]{suárezalvarez2007applications}): 

    \begin{cor}\label{cor:changeringspseq}
        Let $\phi\colon A\to B$ be a morphism of $k$-algebras. Then, for any $B$-bimodule $N$ there is a  spectral sequence
        \[
        \Ext^p_{B^e}(\Tor_q^{A}(B, B),N)\Rightarrow \HH^*(A,N)
        \]
        which is natural in $N$ (seen as $A$-bimodule via $\phi$).
    \end{cor}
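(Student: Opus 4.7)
The plan is to deduce the corollary by applying Theorem~\ref{thm:change_ring_ss} to the ring homomorphism between the enveloping algebras, and then to reinterpret both sides using the Ext/Tor descriptions of Hochschild (co)homology.

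First, I would consider the induced morphism of $k$-algebras $\phi^e\coloneqq \phi\otimes\phi^{\op}\colon A^e\to B^e$. Viewing $A$ as a left $A^e$-module via multiplication and viewing the $B$-bimodule $N$ as a left $B^e$-module, Theorem~\ref{thm:change_ring_ss} produces a spectral sequence
\[
E^{p,q}_2=\Ext^p_{B^e}\bigl(\Tor^{A^e}_q(B^e,A),N\bigr)\Rightarrow \Ext^{p+q}_{A^e}(A,N),
\]
natural in $N$. By the identification~\eqref{eq:HHExt}, the abutment is exactly $\HH^{p+q}(A,N)$, where $N$ is regarded as an $A$-bimodule via $\phi$, so it remains only to rewrite the $E_2$-page.

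The key computational step is the identification of $B^e$-modules
\[
\Tor^{A^e}_q(B^e,A)\cong \Tor^A_q(B,B).
\]
To establish this, I would use the (normalized) bar resolution $P_\bullet\to A$ of $A$ as a left $A^e$-module, with $P_n=A^{\otimes (n+2)}$. Applying $B^e\otimes_{A^e}(-)$ and collapsing the action of the outer two tensor factors of $A$ against $B^e=B\otimes B^{\op}$ via $\phi$, one obtains the complex whose $n$-th term is $B\otimes A^{\otimes n}\otimes B$; this is precisely the bar complex computing $\Tor^A_\bullet(B,B)$, where $B$ is viewed as a right (resp. left) $A$-module via $\phi$. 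Moreover, the residual $B^e$-action on the outer factors agrees on both sides, so the isomorphism is one of $B^e$-modules, as required.

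Substituting this identification into the $E_2$-page of the base-change spectral sequence yields exactly the spectral sequence in the statement, and naturality in $N$ is inherited from Theorem~\ref{thm:change_ring_ss}. The only nontrivial point is the Tor identification in the second step; everything else is a direct specialization of the base-change spectral sequence, so I expect no substantial obstacles beyond verifying that the relevant $B^e$-module structures match under the bar-complex identification.
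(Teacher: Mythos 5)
Your proposal is correct and follows essentially the same route as the paper: apply the base-change spectral sequence to $\phi^e\colon A^e\to B^e$ with $M=A$, identify the abutment with $\HH^*(A,N)$ via~\eqref{eq:HHExt}, and rewrite the $E_2$-page using $\Tor^{A^e}_q(B^e,A)\cong\Tor^A_q(B,B)$. The only difference is that you verify this last isomorphism directly with the bar resolution (implicitly using that $k$ is a field so the bar resolution is $A^e$-projective), whereas the paper simply cites~\cite[Corollary~IX.4.4]{cartanEilenberg}.
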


    \begin{proof}
        The morphism $\phi\colon A\to B$ induces a morphism $\phi^e\colon A^e\to B^e$ between the associated enveloping algebras. By Theorem~\ref{thm:change_ring_ss},  for any $B$-bimodule $N$ there is a spectral sequence 
        \[
        \Ext^p_{B^e}(\Tor^{A^e}_q(B^e,A),N)\Rightarrow \Ext^*_{A^e}(A,N)
        \]
        converging to $\Ext^*_{A^e}(A,N)\cong\HH^*(A,N)$. As $k$ is a field (but, more generally, for a semi-simple ring), the graded group $\Tor_*^{A^e}(B\otimes_k B^{\mathrm{op}}, A)$ is canonically isomorphic to $\Tor_*^{A}(B, B)$ by~\cite[Corollary~IX.4.4]{cartanEilenberg}. The statement about naturality follows by naturality in $N$ of the base-change spectral sequence. 
    \end{proof}

    If we further assume that $\Tor_0^A(B, B)=B\otimes_A B$ is isomorphic to $B$ (as $B$-modules, the isomorphism being induced by the multiplication), then the edge morphism in the base-change spectral sequence gives a map 
    \begin{equation}\label{eq:spseq}
    \HH^n(B)=\Ext^n_{B^e}(B,B)\to \HH^n(A,B)    
    \end{equation}
    which is a morphism of $k$-algebras (when both the Hochschild cohomology and Ext groups are equipped with the respective cup products) -- \emph{cf}.~\cite{suárezalvarez2007applications}. Note also that, if all groups $\Tor^A_q(B,B)$ vanish for $q\geq 1$, then the base-change spectral sequence degenerates at the second page, collapsing on the $p$-axis. In such case, the homology of the total complex, as well as the groups $\HH^*(A,N)$, can be identified with the groups~$E_{\infty}^{*,0}=E_2^{*,0}=\Ext^*_{B^e}(\Tor_0^A(B,B),N)$.

\subsection{Functor (co-)homology }

Let $\mathbf{C}$ be a small category and $\bA$ a complete and cocomplete Abelian category. 
Given a functor $\cF\colon \mathbf{C} \to \bA$,   we  recall  the definitions of \emph{functor homology} (resp.~\emph{cohomology}) groups $\mathrm{H}_*(\bC;\cF)$ (resp.~$\mathrm{H}^*(\bC;\cF)$) of $\bC$ with coefficients in $\cF$. We follow \cite{Gabriel1967CalculusOF} and  \cite[Section~6.1]{primo}. To do so, we first define functor (co-)homology in terms of Kan extensions, and then provide a more direct, computable approach.

Let~$\mathbf{1}$ be the category with a single object and a single morphism. There is a unique functor~$\mathcal{T}\colon \mathbf{C}\to \mathbf{1}$. Since $\bA$ is complete and cocomplete, both left and right Kan extensions of $\cF$ exist. In particular, the left Kan extension~$\Lan_{\mathcal{T}}\cF$ of $\cF$ along~$\mathcal{T}$ exists, and it yields the colimit functor of~$\cF$. Then, the \emph{functor homology}~$\mathrm{H}_n(\mathbf{C};\cF)$ of $\mathbf{C}$ with coefficients in~$\cF$ is  the $n$-th left derived functor of~$\Lan_{\mathcal{T}}\cF$; see, eg.~\cite{maclane:71}. We denote by $\colim_n\cF$ such derived functors, which are also called the \emph{higher colimits} of~$\cF$.
Analogously, the right Kan extension along~$\mathcal{T}$ exists and it yields the limit of~$\cF$; thus, functor cohomology~$\mathrm{H}^n(\mathbf{C};\cF)$ is given by the $n$-th derived functor of~$\lim \cF$. We denote them by $\lim^n\cF$, and these are also called the \emph{higher limits} of $\cF$ -- see  \cite{Gabriel1967CalculusOF} for the equivalence of the two approaches.

 Functor homology groups can be computed, more explicitly, as the homology groups of the chain complex:
\[
   \dots \xrightarrow{\partial_{n}} \bigoplus_{c_0\to \dots\to c_n} \cF(c_0) \xrightarrow{\partial_{n-1}}  \dots \xrightarrow{\partial_2} \bigoplus_{c_0\to c_1\to c_2} \cF(c_0) \xrightarrow{\partial_1} \bigoplus_{c_0\to c_1} \cF(c_0) \xrightarrow{\partial_0} \bigoplus_{c_0\in \mathbf{C}} \cF(c_0) \to 0
\]
indexed on the chains $c_0\to\dots\to c_n$ of objects and morphisms of $\bC$. This chain complex is equipped with the differential
\[
\partial_{n}(f(c_0\to\dots\to c_{n+1}))=\cF(c_0\to c_1)f(c_1\to\dots c_{n+1})+ \sum_{i=1}^{n+1} (-1)^i f(c_0\to\dots\to \widehat{c_i}\to \dots c_{n+1})
\]
where $\widehat{c_i}$ means that the object $c_i$ is removed from the chain, and where  $(c_0\to \dots\to c_n)$ denotes the inclusion of~$f\in \mathcal{F}(c_0)$ into the summand corresponding to the sequence $c_0\to\dots\to c_{n+1}$.

Dually, the Roos complex~\cite{Roos179864} computes the functor cohomology groups $\mathrm{H}^n(\mathbf{C};\cF)$. This is the cochain complex
\[
0\to \prod_{c_0\in \mathbf{C}} \cF(c_0)\xrightarrow{d^0} \prod_{c_0\to c_1} \cF(c_1)\xrightarrow{d^1} \prod_{c_0\to c_1\to c_2} \cF(c_2)\xrightarrow{d^2}\dots\xrightarrow{d^{n-1}} \prod_{c_0\to \dots\to c_n} \cF(c_n) \xrightarrow{d^{n}}\dots
\]
 with the differential $d^n$, whose evaluation on~$f\in \prod_{c_0\to \dots\to c_n} \cF(c_n)$ is given by
\begin{align*}
d^n(f)(c_0\to\dots\to c_{n+1})= &\ (-1)^{n+1}\cF(c_n\to c_{n+1})f(c_0\to \dots\to c_n)+ \\
 + &\sum_{i=0}^n (-1)^i f(c_0\to\dots\to \widehat{c_i}\to \dots c_{n+1}) \ .
\end{align*} 
In the previous formula, the symbol $(c_0\to \dots\to c_n)$ denotes the projection onto the factor corresponding to the sequence~$c_0\to\dots\to c_{n+1}$.

\begin{rem}
    Similar constructions can be obtained using contravariant functors instead of covariant ones -- hence, functors on the opposite category $\bC^\op$ of $\bC$. Note that passing to the opposite category implies that the Roos complexes' description should be modified accordingly. For example, if $\cF$ is contravariant on $\bC$, then the Roos complex computing the functor cohomology of $\bC$ with coefficients in $\cF$ is 
    \[
0\to \prod_{c_0\in \mathbf{C}} \cF(c_0)\xrightarrow{d^0} \prod_{c_0\to c_1} \cF(c_0)\xrightarrow{d^1} \prod_{c_0\to c_1\to c_2} \cF(c_0)\xrightarrow{d^2}\dots\xrightarrow{d^{n-1}} \prod_{c_0\to \dots\to c_n} \cF(c_0) \xrightarrow{d^{n}}\dots
\]
and, analogously, for functor homology. For further reference about this approach, see~\cite{turner-everitt-cell}.
\end{rem}

The homology of a category with coefficients in a functor has been extensively studied, and the literature is vibrant. When restricted to constant functors, the functor (co)homology groups depend only upon the geometric realization of the category $\bC$ -- cf.~\cite{quillenI}. 
In particular, by \cite[Corollary~2]{quillenI}, if $\bC$ is a category with an initial element, then functor (co)homology with coefficients in the constant functor is trivial and isomorphic to the (simplicial) homology of a point. More generally, we have:

\begin{rem}\label{rem:fucntorterminal}
    Let $R$ be a commutative ring, $\bC$ a category with initial object $c_0$, and  $\cF\colon \bC\to \mathbf{Mod}_R$ a functor. Then,  
    \[
\mathrm{H}^n(\mathbf{C};\cF)=
\begin{cases}
    0 & \text{if } n\geq 1\\
    \cF(0) & \text{if } n=0
\end{cases} \ .
\]
    For a proof of this fact, see, for example, \cite[Corollary~3.13]{functor_terminal} or also~\cite[Lemma~1 (3)]{everitt-turner-Booleancovers}. Analogously, if $\cF $ is a contravariant functor and $\bC$ has a terminal object $T$, then we have $\mathrm{H}^*(\mathbf{C};\cF)=0 $ for $*\geq 1$ and $\mathrm{H}^0(\mathbf{C};\cF)=\cF(T)$. 
\end{rem}

The cohomology of categories with coefficients in a functor was generalized by Baues and Wirsching in~\cite{BAUES1985187}, where they consider more general coefficient systems. We recall their construction in Section~\ref{sec:natsyst}. To avoid confusion between the two notions, sometimes we shall denote the functor cohomology of $\bC$ with respect to a functor $\cF$ simply by $\lim^n_{\bC} \cF$. 

\section{Diagrammatic Hochschild cohomology}
\label{section:diagrammaticHH}

In this section, we recall the definition of Hochschild cohomology of diagrams of $k$-algebras, or, shortly, diagrammatic Hochschild cohomology, as developed by Gerstenhaber and Schack (and for this reason sometimes referred to as \emph{Gerstenhaber-Schack cohomology}); for further details, we refer to the various texts~\cite{zbMATH03865538, GERSTENHABER1983143, MR0981619, MR0965749} and to the references therein. Then, we proceed by describing diagrammatic Hochschild cohomology in terms of Baues-Wirsching cohomology of categories~\cite{BAUES1985187}. Although most of the discussion in this section holds for locally finite categories and, more generally, for small categories, we shall restrict to the case of finite categories.

\subsection{Gerstenhaber-Schack complex} 
For a {finite}  category $\bC$, let $\cA$ be a presheaf of $k$-algebras on $\bC$. By a presheaf of $k$-algebras, we mean a functor 
\[
\cA\colon \bC^\op\to \mathbf{Alg}_k
\]
from the opposite category associated to $\bC$ to the category of $k$-algebras, and $k$-algebra homomorphisms. 
A presheaf of $\cA$-bimodules is a functor~$\cM$ on $\bC^\op$ such that~$\cM(c)$ is  a $\cA(c)$-bimodule for all $c\in \bC$, and such that for each morphism $c\to d$ in $\bC$ the map $\cM(d)\to \cM(c)$ is a morphism of $\cA(d)$-modules -- where $\cM(c)$ is viewed as a $\cA(d)$-module via the map $\cA(d)\to \cA(c)$.

Let $\Nerve(\bC)$ be the nerve of the category $\bC$, that is, the simplicial complex on the objects of $\bC$ with $p$-simplices given by the paths of morphisms in $\bC$ of length $p+1$. If $\sigma\in \Nerve(\bC)_p$ is a $p$-simplex,  we  write $\sigma$ as a chain $c_0\to c_1\to \dots\to c_p$ of elements of $\bC$; for such a simplex $\sigma$ we set $\max \sigma \coloneqq c_p$ and $\min\sigma\coloneqq c_0$. Using the definition of the Hochschild cochain complex of Equation~\eqref{eq:HH}, we construct a doubly graded $k$-module
\begin{equation}\label{eq:doublechaincomplexHH}
C^{p,q}(\cA,\cM)\coloneqq \prod_{\sigma\in\Nerve(\bC)_p} C_{\HH}^q (\cA(\max\sigma),\cM(\min\sigma)) \ ,
\end{equation}
where the indices $p,q$ run over all non-negative integers. 

\begin{rem}
The $k$-module $C^{p,q}(\cA,\cM)$ can be seen as the module of functions $\Gamma$ on $\Nerve(\bC)_p$ associating to a $p$-simplex~$\sigma$ the cochain $\Gamma^\sigma\in C^{q}_{\HH}(\cA(\max\sigma,\cM\min\sigma)$.    
\end{rem}

The bigraded module $C^{*,*}(\cA,\cM)$ is equipped with a vertical differential, the Hochschild differential. 
There is a further horizontal differential $C^{*,q}(\cA,\cM)\to C^{*+1,q}(\cA,\cM)$ which is induced by the simplicial differential on the nerve~$\Nerve(\bC)$. More precisely, for a $p$-simplex $\sigma\coloneqq c_0\to \dots\to c_p$ and any index $r=0,\dots, p$, we set 
\[
\partial_r\sigma\coloneqq c_0\to \dots \to \widehat{c_r}\to \dots \to c_p
\]
and define $\partial\sigma\coloneqq \sum_r(-1)^r\partial_r$. 
Observe that, for $r=1,\dots, p-1$,  we have $\min\partial_r\sigma=\min\sigma$ and $\max\partial_r\sigma=\max\sigma$ by definition. Now, let $\phi$ be the algebra morphism $\phi\colon \cA(c_p)\to \cA(c_{p-1})$ induced by the morphism $c_{p-1}\to c_p$. Then, the cochain $\Gamma^{\partial_{p}\sigma}\phi$ defined by \[(\Gamma^{\partial_{p}\sigma}\phi)(a_1,\dots, a_q)\coloneqq \Gamma^{\partial_{p}\sigma}(\phi(a_1),\dots,\phi(a_q)) \ ,\] with $a_1,\dots,a_q\in\cA(c_p)$, is an element of $C^q_{\HH}(\cA(\max\sigma),\cM(\min\sigma))$. Analogously, if $T$ is the morphism $\cM(c_1)\to \cM(c_0)$ induced by the morphism $c_0\to c_1$, then $T\Gamma^{\partial_0} $ belongs to the cochain group $C^q_{\HH}(\cA(\max\sigma),\cM(\min\sigma))$. The induced simplicial differential $d_{\mathrm{simp}}\colon C^{p-1,q}(\cA,\cM)\to C^{p,q}(\cA,\cM)$ on the bigraded module $C^{*,*}(\cA,\cM)$ is finally defined as follows:
\begin{equation}\label{eq:diffHH}
(d_{\mathrm{simp}}\Gamma)^{\sigma}\coloneqq T\Gamma^{\partial_0\sigma}+\sum_{r=1}^{p-1}(-1)^r\Gamma^{\partial_r\sigma} +(-1)^p\Gamma^{\partial_p\sigma}\phi\ .
\end{equation}
The Hochschild differential and the simplicial differential anti-commute, hence $C^{*,*}(\cA,\cM)$, equipped with these differentials, is a double cochain complex -- cf.~\cite[Section~7]{zbMATH03865538}.

\begin{defn}
    The (diagrammatic) Hochschild  cohomology $\HH_{\GS}^*(\cA,\cM)$ of the diagram $\cA$ with coefficients in $\cM$ is the cohomology of the total complex of the double complex~$(C^{*,*}(\cA,\cM), d_{\mathrm{simp}}, d_{\HH}) $. 
\end{defn}

\begin{rem}\label{rem:spseqdoublecompl}
    If $C$ is a double cochain complex, there are two natural filtrations on the total complex $\Tot (C)$. The first filtration  is the \emph{column-wise filtration}
$F^I_p(\Tot (C))_n:= \bigoplus_{r\geq p} C^{r,n-r} $.
The second (row-wise) filtration~$F^{II}$ is the complementary one:
$F^{II}_q(\Tot(C))_n:=\bigoplus_{r\geq q} C^{n-r,r} $. Both filtrations yield spectral sequences and, if the double cochain complex is bounded, the associated spectral sequences converge to the cohomology of the total complex~$\Tot(C)$ -- see, e.g., \cite [Theorem~2.15]{user}. 
\end{rem}

In view of Remark~\ref{rem:spseqdoublecompl}, to compute diagrammatic Hochschild cohomology groups, we can use the spectral sequences associated with the double cochain complex~$(C^{*,*}(\cA,\cM), d_{\mathrm{simp}}, d_{\HH}) $. 
\begin{notation}\label{not:spseq}
    In the follow-up, we consider the filtration obtained by first taking the Hochschild cohomology differential in the $q$-direction, then the simplicial differential in the $p$-direction. Unless otherwise specified, we shall denote the spectral sequence obtained this way with $\{IE_r^{*,*}\}_r$. 
\end{notation}

\subsection{Diagrammatic Hochschild cohomology via natural systems}\label{sec:natsyst}
In this subsection, we show that the second page of the spectral sequence $\{IE_r^{*,*}\}_r$  converging to the Hochschild cohomology $\HH_{\GS}^*(\cA,\cM)$ of a diagram~$\cA$ on a finite category~$\bC$, with coefficients in $\cM$, has a description in terms of Baues-Wirsching cohomology of categories. We begin by recalling the concept of a natural system. 

    Let $\bC$ be a finite category, $\cA$ a presheaf of $k$-algebras on $\bC$ and $\cM$ a $\cA$-bimodule. Observe that, due to the lack of functoriality of Hochschild cohomology (see  Remark~\ref{rem:functo}), the presheaves $\cA$ and~$\cM$ {do not} generally yield a local system of coefficients on $\bC$. In fact,  for given simplices $\tau\subseteq \sigma $ in the nerve of~$\bC$, {there is no obvious} functorial map  
\[
C_{\HH}^q(\cA(\max \sigma ),\cM(\min \sigma)) \to C_{\HH}^q(\cA(\max \tau ),\cM(\min \tau))
\]
induced by $\cA$ and  $\cM$. As a consequence, we can not directly describe the spectral sequence $\{IE_r^{*,*}\}_r$ associated with the double complex ~$(C^{*,*}(\cA,\cM), d_{\mathrm{simp}}, d_{\HH}) $ in terms of homology with twisted coefficients. 
 However, the complexes $
C_{\HH}^q(\cA(\max - ),\cM(\min -)) $ yield  a \emph{natural  system}~\cite{BAUES1985187} on~$\bC$, as we now shall explain. 

For a small category $\bC$, the twisted arrow category $\Tw\bC$ is the category with objects the set of morphisms of $\bC$ and whose $\Hom$-sets $\Hom_{\Tw\bC}(f,g)$ are given by the sets of pairs $(\alpha,\beta)$ of morphisms of $\bC$ making the diagram
 \begin{center}
 \begin{tikzcd}
d \arrow[r, "\alpha"] & d' \\
 c\arrow[u, "f"]  & c' \arrow[l, "\beta"] \arrow[u, "g"']
 \end{tikzcd}    
 \end{center}
 commute. The composition is defined by juxtaposition:
 \[
 (\alpha',\beta')\circ(\alpha,\beta)=(\alpha'\circ\alpha,\beta\circ\beta') \ .
 \]
Note that there is a bifunctor $\Tw\bC\to\bC^{\op}\times \bC$ given by sending $f\colon c\to d$ to $(c,d)$.

\begin{defn}[\cite{BAUES1985187}]
   A \emph{natural system} on a category $\bC$ is a functor $F\colon \Tw\bC\to \mathbf{Ab}$.
\end{defn}

When the morphisms $\alpha$ or $\beta$ above are the identities of $\bC$, we set $\alpha_*\coloneqq F(\alpha,1)$ and $\beta^*\coloneqq F(1,\beta)$. These are the homomorphisms induced by pre- and post-composition with each arrow $f$ of $\bC$.

Given a natural system $F$ on a category $\bC$, we define the cohomology groups $H_{BW}^*(\bC,F)$ of $\bC$ with coefficients in $F$ as follows. The group of $n$-cochains is given by
\[
C^n_{BW}(\bC,F)\coloneqq \prod_{{c_n\xrightarrow{f_n} c_{n-1}\xrightarrow{f_{n-1}} \dots \xrightarrow{f_1} c_0}}F(f_n\circ f_{n-1}\circ\dots\circ f_1)
\]
which we can think of as the values of $F$ at the elements of the nerve of $\bC$, and we can think of its elements as functions~$\Gamma$. The coboundary $\delta_{BW}$ is defined by the formula
\begin{align*}
(\delta_{BW} \Gamma)(f_1,\dots,f_n)=& {f_1}_*\Gamma(f_2,\dots,f_n)+\sum_{i=1}^{n-1}(-1)^i\Gamma(f_1,\dots,f_{i}f_{i+1},\dots,f_n)+\\ & + (-1)^n {f_n}_*\Gamma(f_1,\dots,f_{n-1}) \ .
\end{align*}
We refer to \cite{BAUES1985187} for more details on the construction, along with a description of its properties.

\begin{defn}
    The (\emph{Baues-Wirshing}) cohomology $H^*_{BW}(\bC;F)$ of ~$\bC$ with values in the natural system~$F$ on $\bC$ is the cohomology of the cochain complex $(C^*_{BW}(\bC,F),\delta_{BW})$.
\end{defn}

We now go back to diagrammatic Hochschild cohomology. Let $\bC$ be a finite category, $\cA$ a presheaf of $k$-algebras on $\bC$ and $\cM$ a $\cA$-bimodule.
For each morphism $f\colon c\to d$  of $\bC$, we define 
\begin{equation}\label{eq:eqonobjs}
    \cC^q_{\HH}(\cA,\cM)(f)\coloneqq C^q_{\HH}(\cA(d),\cM(c)) \ .
\end{equation} 
If $g\colon c'\to d'$ is another morphism in $\bC$ and $(\alpha,\beta)\colon f\to g$ is a morphism in $\Tw\bC$,  in view of Remark~\ref{rem:functo} we get a natural map 
\begin{equation}\label{eq: compnatssys}
    \cC^q_{\HH}(\cA,\cM)(\alpha,\beta)\colon\cC^q_{\HH}(f)=C^q_{\HH}(\cA(d),\cM(c))\to C^q_{\HH}(\cA(d'),\cM(c'))=\cC^q_{\HH}(g)
\end{equation}
given by scalar restrictions via $\alpha$ and $\beta$.

\begin{lemma}\label{lemma:natsystHH}
    For each $q\in \N$, $\cC^q_{\HH}(\cA,\cM)$ is a natural system on $\bC$. 
\end{lemma}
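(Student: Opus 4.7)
The plan is to verify that the assignment $\cC^q_{\HH}(\cA,\cM)$ defined on objects by~\eqref{eq:eqonobjs} and on morphisms by~\eqref{eq: compnatssys} satisfies the functor axioms, namely preservation of identities and composition in the twisted arrow category $\Tw\bC$.

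First, I would make the morphism assignment fully explicit. Given $(\alpha,\beta)\colon f\to g$ in $\Tw\bC$ with $\alpha\colon d\to d'$ and $\beta\colon c'\to c$ in $\bC$, the presheaf structure on $\cA$ provides a $k$-algebra homomorphism $\alpha^{*}\coloneqq\cA(\alpha)\colon\cA(d')\to\cA(d)$, while the presheaf structure on $\cM$ provides a $k$-linear map $\beta_{*}\coloneqq\cM(\beta)\colon\cM(c)\to\cM(c')$. For $\phi\in C^q_{\HH}(\cA(d),\cM(c))=\Hom_k(\cA(d)^{\otimes q},\cM(c))$, the induced cochain is
\[
\cC^q_{\HH}(\cA,\cM)(\alpha,\beta)(\phi)(a_1,\dots,a_q)\coloneqq \beta_{*}\bigl(\phi(\alpha^{*}(a_1),\dots,\alpha^{*}(a_q))\bigr),
\]
for $a_1,\dots,a_q\in\cA(d')$. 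This formula is manifestly $k$-multilinear in the arguments, so its output is indeed an element of $C^q_{\HH}(\cA(d'),\cM(c'))$, and the assignment is well defined. The key observation here is that the variances conspire correctly: the covariance of $\alpha$ in $\Tw\bC$ combined with the contravariance of $\cA$ yields a map in the direction needed for pre-composition, while the contravariance of $\beta$ in $\Tw\bC$ combined with the contravariance of $\cM$ yields a map in the direction needed for post-composition. This is precisely why the twisted arrow category is the natural home for such data.

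Next, I would verify the two functor axioms. The identity axiom for $(\id_d,\id_c)\colon f\to f$ is immediate from $\cA(\id_d)=\id_{\cA(d)}$ and $\cM(\id_c)=\id_{\cM(c)}$, which make the formula above reduce to the identity map. For composition, given $(\alpha,\beta)\colon f\to g$ and $(\alpha',\beta')\colon g\to h$ in $\Tw\bC$, whose composite is $(\alpha'\circ\alpha,\beta\circ\beta')\colon f\to h$, the contravariance of $\cA$ and $\cM$ gives $\cA(\alpha'\circ\alpha)=\alpha^{*}\circ(\alpha')^{*}$ and $\cM(\beta\circ\beta')=(\beta')_{*}\circ\beta_{*}$. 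Substituting these into the explicit formula and unwinding the definitions, a direct calculation shows that the composite of the two induced maps equals the map assigned to $(\alpha'\circ\alpha,\beta\circ\beta')$, completing the verification.

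The argument is essentially an exercise in tracking variances, so I do not anticipate any substantial obstacle. Once the explicit formula for the action on morphisms is written down, both functor axioms follow mechanically from the separate functoriality of $\cA$ and $\cM$, together with the definition of composition in $\Tw\bC$ recalled in the preceding discussion.
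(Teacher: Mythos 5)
Your proposal is correct and follows essentially the same route as the paper's own proof: both verify the functor axioms (identity and composition) for the assignment defined by Equations~\eqref{eq:eqonobjs} and~\eqref{eq: compnatssys}, with your version simply making the formula $\phi\mapsto\beta_{*}\circ\phi\circ(\alpha^{*})^{\otimes q}$ and the variance bookkeeping more explicit than the paper does.
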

\begin{proof}
    For each $q\in \N$,  $\cC^q_{\HH}(\cA,\cM)$ is defined on the objects and morphisms of the twisted arrow category of $\bC$ by the Equations~\eqref{eq:eqonobjs} and \eqref{eq: compnatssys}. When applied to an identity morphism $(1_{c},1_c)$ of $\Tw\bC$, by Eq.~\eqref{eq: compnatssys}, the induced map $\cC^q_{\HH}(\cA,\cM)\colon \cC^q_{\HH}(\cA,\cM)(1_c)\to \cC^q_{\HH}(\cA,\cM)(1_c)$ is the identity morphism of the group~$C^q_{\HH}(\cA(c),\cM(c)) $. Furthermore,  $\cC^q_{\HH}(\cA,\cM)$ preserves the compositions in $\Tw\bC$ by Eq.~\eqref{eq: compnatssys}. Therefore,  $\cC^q_{\HH}(\cA,\cM)$ yields a functor $\cC^q_{\HH}(\cA,\cM)\colon\Tw\bC\to \Ab$ on  $\Tw\bC$; which means, a natural system on $\bC$.
\end{proof}

A direct comparison of the definitions gives the following proposition.

\begin{prop}\label{prop:agreementchain}
Let  $\cA$ be a presheaf of $k$-algebras on $\bC$ and $\cM$ a presheaf of $\cA$-bimodules.
    Then, 
    \[
    (C^{q,*}(\cA,\cM),d_{\mathrm{simp}})\cong (C^*_{BW}(\bC,\cC^q_{\HH}(\cA,\cM)), \delta_{BW}) 
     \ ,
    \]
  for all $q\in \N$.
\end{prop}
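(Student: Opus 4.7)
The plan is to exhibit a degree-wise isomorphism of cochain complexes by directly matching their building blocks and then verifying that the differentials agree. First, I would match the cochain groups in each degree~$p$. A $p$-simplex of the nerve is a chain $\sigma = (c_0 \to c_1 \to \dots \to c_p)$, while an indexing chain for $C^p_{BW}(\bC, F)$ is written as $(c_p \xrightarrow{f_p} \dots \xrightarrow{f_1} c_0)$; these are the same data up to a relabeling of indices. Under this identification, $\max\sigma = c_p$ is the source of the composition $f_1 \circ \dots \circ f_p$ and $\min\sigma = c_0$ is its target, so by the definition of the natural system $F = \cC^q_{\HH}(\cA,\cM)$ from Equation~\eqref{eq:eqonobjs}, the factor $F(f_1 \circ \dots \circ f_p) = C^q_{\HH}(\cA(c_0), \cM(c_p)) = C^q_{\HH}(\cA(\max\sigma), \cM(\min\sigma))$ is exactly the factor in $C^{q,p}(\cA, \cM)$ corresponding to $\sigma$. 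Taking the product over all simplices gives a natural isomorphism of $k$-modules in each degree.

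Second, I would verify that the differentials agree. The Gerstenhaber--Schack simplicial differential in Equation~\eqref{eq:diffHH} has three kinds of terms: the first, $T\Gamma^{\partial_0\sigma}$, involves post-composition by $\cM$ applied to $c_0 \to c_1$; the last, $(-1)^p \Gamma^{\partial_p\sigma}\phi$, involves pre-composition of the Hochschild inputs by $\cA$ applied to $c_{p-1} \to c_p$; and the interior terms $(-1)^r \Gamma^{\partial_r\sigma}$ involve no module or algebra action. These three types of terms correspond, respectively, to the last, the first, and the interior terms of the Baues--Wirsching differential. The key observation is that, by construction, the natural system $\cC^q_{\HH}(\cA,\cM)$ acts on a Hochschild cochain precisely by the restriction-of-scalars maps in Equation~\eqref{eq: compnatssys}---that is, by the maps induced by $\cA$ on the inputs and by $\cM$ on the output---so the actions $\alpha_* = F(\alpha,1)$ and $\beta^* = F(1,\beta)$ unpack exactly to the $\phi$-precomposition and $T$-postcomposition appearing in the Gerstenhaber--Schack formula; for interior faces the morphisms $\alpha,\beta$ are identities, and both formulas produce $\Gamma$ on the corresponding face unchanged.

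Finally, I would address the sign bookkeeping. Because the Baues--Wirsching indexing runs in the opposite direction from the nerve indexing, the $r$-th face on the nerve side corresponds to the $(p-r)$-th position on the Baues--Wirsching side, so a face-by-face comparison shows that the two differentials, under the naive identification of simplices, differ by an overall sign depending on $p$. This discrepancy can be absorbed by a degree-wise rescaling of the identification (for instance, by a factor of $(-1)^{p(p+1)/2}$ in degree $p$), yielding a genuine isomorphism of cochain complexes. I expect the main obstacle---such as it is---to be precisely this sign bookkeeping, as the essential mathematical content is a direct unpacking of definitions: the natural system $\cC^q_{\HH}(\cA,\cM)$ was introduced precisely to encode the pre- and post-composition operations that appear in the Gerstenhaber--Schack differential.
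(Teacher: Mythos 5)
Your proof is correct and follows essentially the same route as the paper's: both arguments are a direct unpacking of the definitions, matching the indexing sets (nerve simplices versus Baues--Wirsching chains), the factors ($C^q_{\HH}(\cA(\max\sigma),\cM(\min\sigma))$ versus $\cC^q_{\HH}(\cA,\cM)$ evaluated on the composite), and the three types of terms in the two differentials. The one place where you go beyond the paper is the sign bookkeeping, and you are right to do so: under the naive relabeling the face indexed by $r$ on the nerve side carries sign $(-1)^r$ in $d_{\mathrm{simp}}$ but $(-1)^{p-r}$ in $\delta_{BW}$, so the two differentials into degree $p$ differ by a uniform factor $(-1)^p$ rather than agreeing on the nose; your rescaling by $(-1)^{p(p+1)/2}$ in degree $p$ is exactly the standard fix and does yield an isomorphism of complexes. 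The paper's proof simply asserts that ``the two differentials act the same way on the cochains,'' so your version is the more careful one; since the statement only claims an isomorphism, both are fine. One small slip in your first paragraph: you call $\max\sigma=c_p$ the \emph{source} of the composite while simultaneously using the Baues--Wirsching labels, which contradicts your own displayed identification $F(f_1\circ\dots\circ f_p)=C^q_{\HH}(\cA(\max\sigma),\cM(\min\sigma))$ (in nerve labels the composite runs from $\min\sigma$ to $\max\sigma$); the displayed conclusion is the correct one, so this is only a labeling inconsistency, not a gap.
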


\begin{proof}
    Let $q$ be a natural number. By Lemma~\ref{lemma:natsystHH}, $\cC^q_{\HH}(\cA,\cM)$ is a natural system on $\bC$. The $n$-cochains of $(C^*_{BW}(\bC,\cC^q_{\HH}(\cA,\cM)), \delta) $ are precisely the $n$-cochains of the cochain complex $(C^{q,*}(\cA,\cM),d_{\mathrm{simp}})$ defined in Eq.~\eqref{eq:doublechaincomplexHH}. A comparison of the differential $\delta_{BW}$ with the differential of Eq.~\eqref{eq:diffHH} shows that the two differentials act the same way on the cochains, yielding the required identification.  
\end{proof}

Note that the identification of Proposition~\ref{prop:agreementchain} preserves the cochain complexes' structure also as graded vector spaces over the base field $k$, but we shall not use this fact. 

\begin{rem}
    Given   a category~$\bC$,   a presheaf of $k$-algebras $\cA$ on $\bC$ and $\cM$ a presheaf of $\cA$-bimodules, we can also define the functor
\begin{equation}\label{eq:natusystHH}
\cHH^q(\cA,\cM)\colon \Tw\bC\to \mathbf{Ab}    
\end{equation}
which associates to a morphism $f\colon c\to d$ of $\bC$ the Abelian group  $\HH^q(\cA(d),\cM(c))$. By Remark~\ref{rem:functo}, we can extend the definition of $\cHH^q(\cA,\cM)$ from the objects of $\Tw\bC$ to its morphisms, 
as in Eq.~\eqref{eq: compnatssys}. Then, also $\cHH^q(\cA,\cM)$ yields a natural system on $\bC$.
\end{rem}

Let $IE_1^{*,*} $ be the first page of the spectral sequence considered in Notation~\ref{not:spseq}, that is the spectral sequence obtained from the double complex $(C^{*,*}(\cA,\cM), d_{\mathrm{simp}}, d_{\HH}) $ after taking first the homology with respect to the vertical (Hochschild) differential. 
By Proposition~\ref{prop:agreementchain}, we get an identification 
\begin{equation}\label{eq: firstpage}
(IE^1_{q,*},d_{\mathrm{simp}})=(C^*_{BW}(\bC,\cHH^q(\cA,\cM)), \delta_{BW})  
\end{equation}
between the rows of the first page of the spectral sequence $IE^{*,*}$ and the Baues-Wirshing cochain complex of $\bC$ with coefficients in the natural system $\cHH^q(\cA,\cM)$.
As a consequence, we have a description of the rows of the second page $IE_2^{*,q}$ of such spectral sequence in terms of the Baues-Wirshing cohomology groups of $\bC$, with coefficients in the natural systems $\cHH^q(\cA,\cM)$. We can summarize it as follows:

\begin{prop}\label{prop:jfj}
Let $\bC$ be  a small category, $\cA$  a presheaf of $k$-algebras on $\bC$ and $\cM$ a presheaf of $\cA$-bimodules.
    Then, for all $p,q\in \N$, the cohomology groups appearing in the second page of the spectral sequence $IE_2^{*,*}$  are given by Baues-Wirshing cohomology groups:
\[
    IE_2^{p,q}\cong H^p_{BW}(\bC; \cHH^q(\cA,\cM)) \ .
    \]
    Hence, they can be computed as the cohomology of $\bC$ with coefficients in the natural system $\cHH^q(\cA,\cM)$.    
\end{prop}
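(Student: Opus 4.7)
The plan is to obtain the statement as an essentially direct consequence of Proposition~\ref{prop:agreementchain} combined with the construction of the spectral sequence $\{IE_r^{*,*}\}_r$ fixed in Notation~\ref{not:spseq}. Recall that this spectral sequence is the one associated with the filtration of $\Tot(C^{*,*}(\cA,\cM))$ obtained by first taking the vertical (Hochschild) differential $d_{\HH}$ and then the horizontal (simplicial) one $d_{\mathrm{simp}}$; thus $IE_1^{p,q}$ is computed as the vertical cohomology of the bigraded module $C^{*,*}(\cA,\cM)$ at bidegree $(p,q)$.

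First I would compute $IE_1^{p,q}$ explicitly. By Equation~\eqref{eq:doublechaincomplexHH}, the $(p,q)$-spot is the product $\prod_{\sigma\in \Nerve(\bC)_p} C^q_{\HH}(\cA(\max\sigma),\cM(\min\sigma))$, and $d_{\HH}$ acts factor-wise on each simplex $\sigma$. Since cohomology commutes with products over the finite indexing set $\Nerve(\bC)_p$, this gives
\[
IE_1^{p,q}\ \cong\ \prod_{\sigma\in \Nerve(\bC)_p} \HH^q(\cA(\max\sigma),\cM(\min\sigma))\ =\ C^p_{BW}(\bC,\cHH^q(\cA,\cM)),
\]
where the second identification uses the definition of the natural system $\cHH^q(\cA,\cM)$ applied to the composed morphism $\min\sigma\to\max\sigma$ associated with each simplex $\sigma$.

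Next I would identify the induced horizontal differential on $IE_1^{*,q}$ with the Baues-Wirsching coboundary $\delta_{BW}$ with coefficients in $\cHH^q(\cA,\cM)$. This is the key step, and it is where I would need to be careful: one must check that the scalar-restriction maps $\phi^*$ and $T_*$ appearing in $d_{\mathrm{simp}}$ (see Eq.~\eqref{eq:diffHH}) commute with $d_{\HH}$, so that they descend to Hochschild cohomology classes and coincide there with the structure maps $\alpha_*$ and $\beta^*$ of the natural system $\cHH^q(\cA,\cM)$. Once this is in place, the formula for the induced differential on $IE_1^{*,q}$ coincides term-by-term with $\delta_{BW}$, giving
\[
(IE_1^{*,q},d_{\mathrm{simp}})\ \cong\ (C^*_{BW}(\bC,\cHH^q(\cA,\cM)),\delta_{BW}),
\]
which is precisely Equation~\eqref{eq: firstpage}.

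Finally I would conclude by taking cohomology with respect to $d_{\mathrm{simp}}$: by the construction of the spectral sequence, $IE_2^{p,q}=H^p(IE_1^{*,q},d_{\mathrm{simp}})$, and by the previous identification this is exactly $H^p_{BW}(\bC;\cHH^q(\cA,\cM))$. The main obstacle in the argument is strictly contained in the second step: verifying that the horizontal structure maps descend to Hochschild cohomology and assemble into the natural system $\cHH^q(\cA,\cM)$; however, this is essentially the content of Lemma~\ref{lemma:natsystHH} applied to cohomology rather than to cochains, and so the proof reduces to invoking the chain-level identification of Proposition~\ref{prop:agreementchain} at the level of the $E_1$-page.
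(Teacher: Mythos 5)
Your proposal is correct and follows essentially the same route as the paper: the paper likewise obtains $IE_2^{p,q}\cong H^p_{BW}(\bC;\cHH^q(\cA,\cM))$ by first identifying the rows of the double complex with the Baues--Wirsching cochain complex at the cochain level (Proposition~\ref{prop:agreementchain}), then passing to vertical (Hochschild) cohomology to get Equation~\eqref{eq: firstpage}, and finally taking cohomology with respect to $d_{\mathrm{simp}}$. The verification you rightly flag as the key step --- that the scalar-restriction maps commute with $d_{\HH}$ and descend to the structure maps of the natural system $\cHH^q(\cA,\cM)$ --- is exactly what the paper handles via Lemma~\ref{lemma:natsystHH} and the comparison of differentials in Proposition~\ref{prop:agreementchain}.
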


To the authors' knowledge, Proposition~\ref{prop:jfj} was  first observed in \cite{robinson2008cohomology} (see also the thesis~\cite{robinson2011cohomologylambdaringspsirings} 
and, in particular, Section~5.7) and, more recently, albeit in a different form, it was used in \cite{mundinger2023hochschild}. In view of the properties of Baues-Wirshing cohomology groups, we can further identify the rows of the second page of the spectral sequence $IE^{*,*}$ with the more classical functor cohomology groups. In fact,  by \cite[Theorem 4.4 and Remark~8.7]{BAUES1985187}, we have the isomorphisms
\begin{equation}\label{eq:isonat=fun}
H^n_{BW}(\bC,F)\cong \Ext_{\Tw\bC}^n(\bZ,F)\cong \mathrm{lim}_{\Tw\bC}^n F    
\end{equation}
between the cohomology of $\bC$ with coefficients in a natural system $F$, the Ext-groups, and the functor cohomology groups of the category $\Tw\bC$ with coefficients in $F$ (seen as a functor on $\Tw\bC$). Therefore, in view of Proposition~\ref{prop:jfj} we get the following:

\begin{thm}\label{thm:spseqtoHH}
    Let $\bC$ be  a finite category, $\cA$  a presheaf of $k$-algebras on $\bC$ and $\cM$ of $\cA$-bimodules.
    Then, there is a first-quadrant spectral sequence
    \[
    IE_2^{p,q}\cong\mathrm{lim}_{\Tw\bC}^p \cHH^q(\cA,\cM)
    \]
    converging to the diagrammatic Hochschild cohomology $\HH_{\GS}^*(\cA,\cM)$. 
\end{thm}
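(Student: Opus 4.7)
The plan is to assemble the theorem from the pieces already developed in the excerpt, with essentially no new computation required. First I would start from the double cochain complex $(C^{*,*}(\cA,\cM), d_{\mathrm{simp}}, d_{\HH})$ introduced in Equation~\eqref{eq:doublechaincomplexHH}, whose total complex computes $\HH_{\GS}^*(\cA,\cM)$ by definition. Since $\bC$ is finite, the bigraded module $C^{p,q}(\cA,\cM)$ is concentrated in the first quadrant and each column is bounded in $p$; thus the column-wise filtration $F^{I}$ described in Remark~\ref{rem:spseqdoublecompl} is bounded and the resulting spectral sequence $\{IE_r^{*,*}\}_r$, set up as in Notation~\ref{not:spseq}, converges to the cohomology of the total complex, hence to $\HH_{\GS}^*(\cA,\cM)$.

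Next, I would identify the second page. Taking the vertical (Hochschild) differential first, the first page is, row by row, the cochain complex $(C^{q,*}(\cA,\cM), d_{\mathrm{simp}})$ with coefficients replaced by the Hochschild cohomology groups in $q$-direction; this is precisely the content of Equation~\eqref{eq: firstpage}, which uses Proposition~\ref{prop:agreementchain}. Taking now the horizontal cohomology, Proposition~\ref{prop:jfj} yields the identification
\[
IE_2^{p,q} \cong H^p_{BW}(\bC; \cHH^q(\cA,\cM)),
\]
so it only remains to rewrite the right-hand side as a higher limit over $\Tw\bC$.

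To do this, I would invoke the comparison of Baues--Wirsching cohomology with functor cohomology recorded in Equation~\eqref{eq:isonat=fun}, namely
\[
H^n_{BW}(\bC,F) \cong \Ext_{\Tw\bC}^n(\bZ,F) \cong \mathrm{lim}_{\Tw\bC}^n F,
\]
applied to the natural system $F = \cHH^q(\cA,\cM)$ (which is a functor on $\Tw\bC$ by Lemma~\ref{lemma:natsystHH} adapted to the Hochschild cohomology, as noted in the remark surrounding Equation~\eqref{eq:natusystHH}). Combining this with the previous isomorphism gives $IE_2^{p,q} \cong \mathrm{lim}_{\Tw\bC}^p \cHH^q(\cA,\cM)$, as desired.

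There is essentially no technical obstacle here: every step is either definitional, a direct invocation of an earlier result, or the standard convergence of the spectral sequence of a bounded first-quadrant double complex. The only thing worth double-checking in the write-up is that the natural system structure used in Proposition~\ref{prop:jfj} for $\cHH^q(\cA,\cM)$ (on cohomology) really agrees with the one obtained from $\cC^q_{\HH}(\cA,\cM)$ (on cochains) after passing to vertical cohomology, which is ensured by the naturality of the restriction-of-scalars maps in Equation~\eqref{eq: compnatssys} with respect to the Hochschild differential; this is immediate but should be mentioned explicitly for clarity.
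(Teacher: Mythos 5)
Your proposal is correct and follows essentially the same route as the paper's own proof: convergence of the first-quadrant spectral sequence of Notation~\ref{not:spseq}, identification of the first page via Eq.~\eqref{eq: firstpage} and Proposition~\ref{prop:agreementchain}, the second page via Proposition~\ref{prop:jfj}, and the final translation to higher limits over $\Tw\bC$ by Eq.~\eqref{eq:isonat=fun}. Your closing remark about checking compatibility of the natural system structures on cochains and on cohomology is a sensible point of care, but introduces nothing beyond what the paper already does implicitly.
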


\begin{proof}
Consider the spectral sequence  $\{IE_r^{*,*}\}_r$ from Notation~\ref{not:spseq}. 
By construction, it converges to $\HH_{\GS}^*(\cA,\cM)$ and its first page is identified with the cochain complex associated to $\bC$ with coefficients in the natural system $\cHH^q(\cA,\cM)\colon \Tw\bC\to \mathbf{Ab}$, as by Eq.~\eqref{eq: firstpage}. Then, for each $q\in \N$, application of the simplicial differential yields an identification of the rows of the second page $IE_2^{*,q}$ with the cohomology of $\bC$ with coefficients in  $\cHH^q(\cA,\cM)$. The statement 
now follows from Eq.~\eqref{eq:isonat=fun}.
\end{proof}

We provide an illustrative example. 

\begin{example}\label{ex:[n]}
Consider the category $[n]$, which is the category with objects the natural numbers $0,\dots, n$ and with a morphism $i\to j$ if and only if $i\leq j$. A presheaf of $k$-algebras on $[n]$ associates to each number~$i$, with $i\leq n$, a $k$-algebra~$A_i$ and for $i\leq j$  a morphism of $k$-algebras $A_j\to A_i$. Therefore, for each $i\leq j$, we have $\cHH^q(\cA,\cA)(i\to j)=\HH^q(A_j,A_i)$. Consider indices $i', j'$ with $i'\leq i$ and $j\leq j'$;  then, we get a morphism 
 \begin{center}
 \begin{tikzcd}
j \arrow[r, "j\leq j'"] & j' \\
 i\arrow[u, "i\leq j"]  & i' \arrow[l, "i'\leq i"] \arrow[u, "i'\leq j'"']
 \end{tikzcd}    
 \end{center}
in the twisted arrow category $\Tw[n]$. The natural system $\cHH^i(\cA,\cA)$ gives
a map
$$\cHH^q(\cA,\cA)((i,j)\to(i',j'))\colon\HH^q(A_j,A_i)\to \HH^q(A_{j'},A_{i'})$$
(recall that Hochschild cohomology is contravariant in the first argument and covariant in the second one, and that $\cA$ is contravariant on $[n]$) between the Hochschild cohomology groups. Furthermore, this map is induced by scalar restrictions. In the case $n=2$, it yields the following diagram of $k$-algebras (where all compositions have been omitted):
 \begin{center}
 \begin{tikzcd}
& & \HH^i(A_2,A_0) & &  \\
& \HH^i(A_1,A_0) \arrow[ur]&  & \HH^i(A_2,A_1) \arrow[ul] &  \\
\HH^i(A_0,A_0) \arrow[ur, ""]& & \HH^i(A_1,A_1)\arrow[ul] \arrow[ur] & &\HH^i(A_2,A_2)\arrow[ul]
 \end{tikzcd}    
 \end{center}
By Theorem~\ref{thm:spseqtoHH}, we have   $ IE_2^{p,q}=\mathrm{lim}_{\Tw[n]}^p \cHH^q(\cA,\cA)$, hence the $q$-th line of the second page of such spectral sequence are the higher limits of the natural system $\cHH^q(\cA,\cA)$.
    \end{example}

The category $[n]$ of Example~\ref{ex:[n]} is a \emph{free category}, which means it is a category freely generated by a directed graph. For such categories, there is a bound on the (Baues-Wirshing) cohomological dimension. We first need a definition:

\begin{defn}
    Let $\bC$ be a small category. The (possibly infinite) natural number
    \[
    \dim \bC\coloneqq \min\{N\mid H^k_{BW}(\bC;D)=0  \text{ for all } k> N \text { and all natural systems } D\} 
    \]
is called the    \emph{dimension} of $\bC$.
\end{defn}

The dimension of the category is related to its homological properties. In particular, free categories have vanishing higher Baues-Wirshing cohomology; in fact, if $\bC$ is a free category, then  $\dim\bC\leq 1$ by~\cite[Theorem~6.3(A)]{BAUES1985187}.

\begin{cor}\label{cor:highercolumns}
    Let $\bC$ be a free category. Then, 
     $
    IE_2^{p,q}    =0 
    $
    for all $p\geq 2$ and $ q\geq 0$.
\end{cor}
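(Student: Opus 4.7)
The plan is to combine the identification of the second page of the spectral sequence $\{IE_r^{*,*}\}_r$ with Baues-Wirsching cohomology, together with the known bound on the Baues-Wirsching cohomological dimension of a free category.

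First, I would invoke Proposition~\ref{prop:jfj} (equivalently, Theorem~\ref{thm:spseqtoHH}), which identifies the entries of the second page as Baues-Wirsching cohomology groups of $\bC$ with coefficients in the natural system $\cHH^q(\cA,\cA)$:
\[
IE_2^{p,q} \cong H^p_{BW}(\bC;\cHH^q(\cA,\cA)).
\]
This reduces the vanishing statement to a purely categorical vanishing result about $H^p_{BW}(\bC;-)$ for $p\geq 2$.

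Next, I would appeal to the definition of the dimension $\dim\bC$ of a small category as the smallest $N$ such that $H^k_{BW}(\bC;D)=0$ for all $k>N$ and all natural systems $D$ on $\bC$. By the cited result of Baues-Wirsching (\cite[Theorem~6.3(A)]{BAUES1985187}), a free category has $\dim\bC \leq 1$. Applying this to $D=\cHH^q(\cA,\cA)$, which by Lemma~\ref{lemma:natsystHH} (via the remark around Eq.~\eqref{eq:natusystHH}) is indeed a natural system on $\bC$, we conclude that $H^p_{BW}(\bC;\cHH^q(\cA,\cA))=0$ whenever $p \geq 2$. Combining with the first step yields $IE_2^{p,q}=0$ for all $p\geq 2$ and all $q\geq 0$.

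There is no serious obstacle here: the statement is essentially an immediate consequence of Proposition~\ref{prop:jfj} together with the dimension bound for free categories. The only care required is to verify that $\cHH^q(\cA,\cA)$ is genuinely a natural system, so that the general vanishing result applies; this was already established earlier in the section.
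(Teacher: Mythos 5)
Your proof is correct and follows exactly the paper's own argument: identify $IE_2^{p,q}$ with $H^p_{BW}(\bC;\cHH^q(\cA,\cM))$ via Proposition~\ref{prop:jfj}, then invoke the Baues--Wirsching bound $\dim\bC\leq 1$ for free categories from \cite[Theorem~6.3(A)]{BAUES1985187}. The only cosmetic difference is that you state the coefficients as $\cHH^q(\cA,\cA)$ where the paper allows a general bimodule $\cM$; the argument is unaffected.
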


\begin{proof}
    By assumption, $\bC$ is a free category, and by \cite[Theorem~6.3(A)]{BAUES1985187} its dimension is at most~$1$. Therefore,  $H^p_{BW}(\bC,\cHH^q(\cA,\cM))=\mathrm{lim}_{\Tw\bC}^p \cHH^q(\cA,\cM)=0 $ for all $p\geq 2$. The result follows.
\end{proof}

In view of Corollary~\ref{cor:highercolumns}, the second page of the spectral sequence abutting to the diagrammatic Hochschild cohomology is concentrated in two non-trivial columns. Therefore, computation of the Hochschild cohomology $\HH^n_{\GS}(\cA,\cM)$ results in solving the extension problems only involving the cohomology groups $\mathrm{lim}_{\Tw\bC}^0 \cHH^n(\cA,\cM)$
and $\mathrm{lim}_{\Tw\bC}^1 \cHH^{n-1}(\cA,\cM)$. 
To further simplify computations of these higher limits, we can restrict, for example, to considering presheaves of symmetric Frobenius algebras or incidence algebras. In fact, in such a case, the higher limits on the twisted arrow category reduce to higher limits on $\bC$. In the next section, we shall focus on diagrams involving homological epimorphisms.

\section{Diagrams of homological epimorphisms }
\label{section:homologicalepi}
In this section, we first recall the notion of  \emph{homological epimorphisms} of $k$-algebras, following~\cite{GEIGLE1991273, 10.21099/tkbjm/1496165029, suárezalvarez2007applications,HERMANN2016687}, and some of their properties. Then, we analyze the diagrammatic Hochschild cohomology with respect to diagrams of algebras and homological epimorphisms.

 \subsection{Homological epimorphisms} Let $\Mod_A$ be the category of finitely generated (left) modules over a $k$-algebra $A$. For a morphism $\phi\colon A\to B$ of $k$-algebras, denote by $\phi^*\colon \Mod_B\to \Mod_A$ the restriction of scalars functor, which identifies $B$-modules as $A$-modules via~$\phi$. If $M,N$ are $B$-modules, the  natural map $\Hom_B(M,N)\to \Hom_A(M,N)$ given by  restriction of scalars induces  natural homomorphisms $\Ext^i_B(M,N)\to \Ext^i_A(M,N)$ of Ext-groups. 
 Likewise, there are natural morphisms $\Hom_{A^e}(B,M)\to \Hom_{A^e}(A,M)$ and $\Ext^i_{A^e}(B,M)\to \Ext^i_{A^e}(A,M)$ for all $B$-modules $M$.

Recall that a morphism $\phi\colon A\to B$ of $k$-algebras is called an \emph{epimorphism} if it is an epimorphism in the category of rings. This is equivalent to asking that the restriction functor~$\phi^*\colon \Mod_B\to \Mod_A$ is full and faithful. A morphism $\phi\colon A\to B$ is an epimorphism if and only if the multiplication map $\Tor_0^A(B,B)=B\otimes_A B\to B$ is an isomorphism of $B$-modules~\cite[Proposition~1.1]{SILVER196744}. 

\begin{example}\label{ex:surjepi}
    A surjective map of algebras is also an epimorphism. Vice versa, finite epimorphisms are also surjective. 
\end{example}

Note that not all epimorphisms are surjective; for example, the inclusion $\,bZ\to\bQ$ is an epimorphism, but it is not surjective. 

\begin{rem}[{\cite[Corollary~1.2]{SILVER196744}}]
If $\phi\colon A\to B$ is an epimorphism, then $\phi$ maps the center of $A$ into the center of $B$.
\end{rem}

For a ring $R$, consider the homotopy category $K(R)$ of chain complexes of $R$-modules and the derived category $D(R) = D(\Mod_R)$, that is the category obtained from $K(R)$ by inverting the quasi-isomorphisms. We shall only consider bounded chain complexes, and we denote by $D^b(A)$ and  $D^b(B)$ the bounded derived categories associated to the $k$-algebras $A$ and $B$. A morphism $\phi\colon A\to B$ of $k$-algebras yields also a derived restriction of scalars functor $D^b(\phi^*)\colon D^b(B)\to D^b(A)$ between the associated derived categories.

\begin{defn}[{\cite[Definition~4.5]{GEIGLE1991273}}]
    A morphism $\phi\colon A\to B$  of $k$-algebras is a \emph{homological epimorphism} if the induced functor
    \[
    D^b(\phi)\colon D^b(B)\to D^b(A)
    \]
    is fully faithful. 
\end{defn}

The notion of homological epimorphism extends the concept of epimorphism of algebras. In fact, if $\phi\colon A\to B$ is a  epimorphism of $k$-algebras with $B$ $A$-flat, then $\phi$ is a homological epimorphism~\cite[Corollary~4.7 (1)]{GEIGLE1991273}.  There are various characterizations of homological epimorphisms also in terms of $\Ext$ and $\Tor$ functors. For the following characterization we refer to~\cite[Theorem~4.4]{GEIGLE1991273} and \cite[Lemma~2.1]{10.21099/tkbjm/1496165029};

\begin{lemma}\label{lemma:vanishingTor}
    Let $\phi\colon A\to B$ be an epimorphism of algebras. Then, the following are equivalent:
    \begin{itemize}
        \item $\phi$ is a homological epimorphism;
        \item $\phi^\op$ is a homological epimorphism;
        \item $\Tor_i^A(B,M)=0$
    for all $i\geq 1$ and all $B$-modules $M$;
    \item $\Tor_i^A(B,B)=0$
    for all $i\geq 1$;
    \item $\Ext^i_A(B,M)=0$
    for all $i\geq 1$ and all $B$-modules $M$;
    \item $\Ext^i_A(B,B)=0$
    for all $i\geq 1$;
    \item the natural morphisms 
    \[
    \Ext^i_B(M,N)\to\Ext^i_A(M,N)
    \]
    are isomorphisms for all $i\geq 1$ and all $B$-modules $M,N$.
    \end{itemize} 
\end{lemma}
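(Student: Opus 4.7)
The plan is to establish the equivalences by cycling through the conditions (labelling them (1)--(7) in the order of the statement), using dimension shifting on one side and the base-change spectral sequence to transport vanishing from $\Tor$ to $\Ext$. First I would record the preliminary consequence of $\phi$ being an epimorphism: by associativity of the tensor product, the isomorphism $B\otimes_A B\cong B$ (multiplication) extends to $B\otimes_A M\cong M$ for every $B$-module $M$, and dually $\Hom_A(B,N)=\Hom_B(B,N)$. This identifies $\Tor_0^A(B,-)$ with the identity on $\Mod_B$ and makes the $i=0$ case of the Ext-isomorphism in~(7) automatic. The equivalence $(1)\Leftrightarrow (7)$ is then the translation of fully-faithfulness of $D^b(\phi^*)\colon D^b(B)\to D^b(A)$ into derived $\Hom$'s, restricted to positive degrees; and the specializations $(7)\Rightarrow (5)\Rightarrow (6)$ (setting $M=B$ and using $\Ext^i_B(B,-)=0$ for $i\geq 1$, then $N=B$) and $(3)\Rightarrow (4)$ (setting $M=B$) are immediate.

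For $(4)\Rightarrow (3)$ I would use dimension shifting: for a $B$-module $N$ pick a short exact sequence $0\to K\to F\to N\to 0$ with $F$ free over $B$; since (4) together with the compatibility of $\Tor$ with direct sums yields $\Tor_j^A(B,F)=0$ for $j\geq 1$, the long exact sequence of $\Tor^A(B,-)$, combined with the preliminary identification $B\otimes_A-=\id$ on $\Mod_B$, gives $\Tor_1^A(B,N)=0$ and $\Tor_{i+1}^A(B,N)\cong \Tor_i^A(B,K)$ for $i\geq 1$; induction on $i$ then finishes. For $(3)\Rightarrow (7)$ I would invoke the base-change spectral sequence of Theorem~\ref{thm:change_ring_ss},
\[
E_2^{p,q}=\Ext_B^p(\Tor_q^A(B,M),N)\Rightarrow \Ext_A^{p+q}(M,N),
\]
which under (3) collapses to the $p$-axis with $E_2^{p,0}=\Ext^p_B(M,N)$, producing the isomorphisms of (7). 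The equivalence $(1)\Leftrightarrow (2)$ then follows from the now-established equivalence with (4) and the symmetry of $\Tor$ in its two arguments, which renders (4) invariant under swapping sides.

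The main obstacle is closing the cycle from (6) back to one of the stronger conditions: a priori $\Ext^i_A(B,B)=0$ asserts vanishing only on the single coefficient module $B$, while (4)--(5) and (7) demand vanishing on arbitrary $B$-module coefficients. Here I would exploit the finite-dimensionality convention on $k$-algebras together with the $k$-linear duality $D=\Hom_k(-,k)$: an exact involution interchanging left and right $B$-modules, under which the tensor--Hom adjunction yields the natural isomorphism $\Ext^i_A(B,DB)\cong D\Tor_i^A(B,B)$. The task thus reduces to showing $\Ext^i_A(B,DB)=0$, which by a dimension-shifting argument using a finite-rank free $B$-presentation of the finitely generated $B$-module $DB$ (where $\Ext^i_A(B,-)$ commutes with the finite direct sums involved) propagates from (6) along the syzygies of $DB$ in the finite-dimensional setting. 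This duality-based step is the most delicate part of the argument, and for its precise form I would follow the treatments in the cited works of Geigle--Lenzing and Hermann.
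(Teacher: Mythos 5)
The paper does not actually prove this lemma: it is quoted from the literature, with pointers to \cite[Theorem~4.4]{GEIGLE1991273} and \cite[Lemma~2.1]{10.21099/tkbjm/1496165029}, so there is no internal proof to compare yours against, and your attempt has to stand on its own. Most of it does. The preliminary identifications $B\otimes_A M\cong M$ and $\Hom_A(M,N)=\Hom_B(M,N)$ for $B$-modules are correct consequences of $\phi$ being an epimorphism; $(4)\Rightarrow(3)$ by dimension shifting in the second variable of $\Tor$ works (a free presentation of $M$ \emph{lowers} the relevant degree there); $(3)\Rightarrow(7)$ via the degenerate base-change spectral sequence is right (modulo identifying the edge map with the restriction map, which is standard); $(7)\Rightarrow(5)\Rightarrow(6)$ and $(3)\Rightarrow(4)$ are trivial specializations; $(1)\Leftrightarrow(7)$ is the definition up to the routine d\'evissage from modules to bounded complexes; and $(1)\Leftrightarrow(2)$ via the side-symmetry of $(4)$ is fine. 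Your duality $\Ext^i_A(B,DB)\cong D\Tor_i^A(B,B)$ with $DB\coloneqq\Hom_k(B,k)$ is also correct, and it is exactly how the cycle should be closed: applying $(5)$ to the finitely generated $B$-module $M=DB$ gives $(4)$ directly. Had you routed the argument that way, conditions $(1),(2),(3),(4),(5),(7)$ would all be linked without ever touching $(6)$.

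The genuine gap is your treatment of $(6)$, which is the only nontrivial entry point left once the above is in place. Taking a finite free $B$-presentation $0\to K\to B^n\to DB\to 0$ and ``propagating $(6)$ along the syzygies of $DB$'' runs in the wrong direction: since $(6)$ kills $\Ext^{\ge 1}_A(B,B^n)$, the long exact sequence gives $\Ext^i_A(B,DB)\cong\Ext^{i+1}_A(B,K)$ for $i\ge 1$, so each step replaces the problem by the same problem one degree higher for the next syzygy $\Omega^j_B(DB)$, and the induction never terminates unless $DB$ has finite projective dimension over $B$ --- which fails for a general finite-dimensional algebra, $DB$ being the injective cogenerator. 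The obstruction is not an artifact of your bookkeeping: every formal variant hits the same wall (for instance, feeding $\Ext^*_A(B,B)=0$ back into the base-change spectral sequence with $M=N=B$ only produces conditions such as $\Hom_B(\Tor_1^A(B,B),B)=0$, and $\Hom_B(X,B)=0$ does not force $X=0$ unless $B$ is self-injective --- take $X$ the simple at the source of the $A_2$-quiver algebra). So the implication $(6)\Rightarrow(4)$ is precisely where the real content of the lemma sits, and your argument does not establish it; you need either to reproduce the actual argument from the cited sources for this one implication or to flag it as unproved, since deferring ``the precise form'' to Geigle--Lenzing and Hermann is deferring the only step that is not formal.
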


As consequence of Lemma~\ref{lemma:vanishingTor}, if $\phi$ is a homological epimorphism then also $\phi^e\colon A^e\to B^e$ is a homological epimorphism -- see~\cite[Proposition~2.3 (3)]{10.21099/tkbjm/1496165029}. Furthermore, restriction of scalars along a homological epimorphism induces maps $\HH^i(B)=\Ext^i_{B^e}(B,B)\to\Ext^i_{A^e}(B,B)$ which are isomorphisms for all $i\geq 1$~\cite[Proposition~2.3 (4)]{10.21099/tkbjm/1496165029}. A more general result, first proven in \cite[Proposition~3.1]{10.21099/tkbjm/1496165029} and extended in \cite[Proposition~5.3]{suárezalvarez2007applications}, shows the following connection to  Hochschild cohomology:

\begin{prop}\label{prop:isoHH}
    Let $\phi \colon A\to B$ be an epimorphism of  $k$-algebras. If $\phi$ is also a homological epimorphism, then there is a natural isomorphism 
    \[
    \HH^*(B,-)\to \HH^*(A,-) 
    \] 
    of functors on the category of $B$-bimodules. 
\end{prop}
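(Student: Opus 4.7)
The plan is to derive the statement from the base-change spectral sequence of Corollary~\ref{cor:changeringspseq}, exploiting the vanishing characterizations of homological epimorphisms supplied by Lemma~\ref{lemma:vanishingTor}. For a $B$-bimodule $N$ (viewed as an $A$-bimodule through $\phi$), Corollary~\ref{cor:changeringspseq} produces a spectral sequence
\[
E_2^{p,q} = \Ext^p_{B^e}(\Tor_q^A(B,B), N) \Rightarrow \HH^{p+q}(A,N) \ ,
\]
natural in $N$. My plan is to show that this spectral sequence collapses at $E_2$ onto the $p$-axis and that its $E_2^{*,0}$-column recovers $\HH^*(B,N)$.

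The first step is to apply the two hypotheses to kill off all but one row. Since $\phi$ is a homological epimorphism, Lemma~\ref{lemma:vanishingTor} gives $\Tor_q^A(B,B) = 0$ for every $q \geq 1$, so only the row $q=0$ may be non-zero. Since $\phi$ is also an epimorphism, the multiplication map induces an isomorphism of $B$-bimodules $\Tor_0^A(B,B) = B \otimes_A B \cong B$. Consequently, the spectral sequence degenerates at the second page, and the edge morphism yields isomorphisms
\[
\HH^n(B,N) = \Ext^n_{B^e}(B,N) = E_2^{n,0} = E_\infty^{n,0} \cong \HH^n(A,N)
\]
for every $n\geq 0$, exactly the maps already recorded in Eq.~\eqref{eq:spseq}.

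It then remains to upgrade these pointwise isomorphisms into a natural transformation of functors $\HH^*(B,-) \to \HH^*(A,-)$ on $B$-bimodules. For this, I would invoke the naturality of the base-change spectral sequence in the second variable, already asserted in Corollary~\ref{cor:changeringspseq}: the edge morphisms $\HH^n(B,N) \to \HH^n(A,N)$ are natural in $N$, and the previous step shows that their components are isomorphisms. There is no substantial obstacle; the only point to verify, which is more bookkeeping than difficulty, is that the identification $B \otimes_A B \cong B$ used to rewrite the $E_2^{*,0}$-column is compatible with the $B^e$-module structure appearing in $\Ext^*_{B^e}(-,N)$, so that the collapse isomorphism really coincides with the edge map of Eq.~\eqref{eq:spseq} rather than with some auxiliary comparison map. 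This compatibility is built into the construction of the Grothendieck spectral sequence recalled earlier in the paper.
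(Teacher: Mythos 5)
Your proposal is correct and follows essentially the same route as the paper's own proof: both invoke the base-change spectral sequence of Corollary~\ref{cor:changeringspseq}, use Lemma~\ref{lemma:vanishingTor} to kill the rows $q\geq 1$, identify $\Tor_0^A(B,B)\cong B$ via the epimorphism hypothesis, and conclude by naturality of the spectral sequence in $N$. Your extra remark about checking that the identification $B\otimes_A B\cong B$ is compatible with the $B^e$-module structure is a reasonable piece of bookkeeping that the paper leaves implicit.
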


\begin{proof}
    In view of Lemma~\ref{lemma:vanishingTor}, the groups $\Tor_i^A(B,M)$ vanish for all positive $i$ and $B$-bimodule $M$. Furthermore, the group $\Tor_0^A(B,B)$ is isomorphic to $B$, as $B$-bimodules. Then, the spectral sequence of Corollary~\ref{cor:changeringspseq} degenerates at the second page;  all the higher differentials are trivial. Naturality of  the change of rings spectral sequences 
    yields a transformation
    \[
    \HH^n(B,-)=\Ext^n_{B^e}(B,-)\to \HH^n(A,-)    
    \]
    which is a natural isomorphism of graded modules. 
\end{proof}

Provided $A$ and $B$ are finite-dimensional $k$-algebras, the morphism between the associated Hochschild cohomology groups  in Proposition~\ref{prop:isoHH} can be described more explicitly:

\begin{prop}\label{prop:isoHH2}
    Let $\phi \colon A\to B$ be an epimorphism of finite-dimensional $k$-algebras with $\ker\phi$ a $A^e$-projective module. If $\phi$ is also a homological epimorphism, then the restriction of scalars induces isomorphisms 
    \[
    \HH^n(B,M)\to \HH^n(A,M) 
    \] 
for all $B$-bimodules $M$ and all $n\geq 0$. 
    
\end{prop}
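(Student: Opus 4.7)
The plan is to factor the restriction-of-scalars map at the level of Ext-groups as
\[ \HH^n(B, M) = \Ext^n_{B^e}(B, M) \xrightarrow{(1)} \Ext^n_{A^e}(B, M) \xrightarrow{(2)} \Ext^n_{A^e}(A, M) = \HH^n(A, M) \ , \]
where $(1)$ is restriction of scalars along $\phi^e\colon A^e\to B^e$ and $(2)$ is induced by $\phi\colon A\twoheadrightarrow B$ (which is surjective by Example~\ref{ex:surjepi}, since it is a finite epimorphism) in the first argument of $\Ext$. The composite agrees with $\phi^*$ by direct inspection on the bar cochain complex, so it is enough to show each factor is an isomorphism.

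For factor $(1)$, the induced map $\phi^e\colon A^e\to B^e$ is itself a homological epimorphism, as recalled in the discussion preceding Proposition~\ref{prop:isoHH}. Then Lemma~\ref{lemma:vanishingTor} applied to $\phi^e$ yields the claimed isomorphism in positive degrees, while the case $n = 0$ follows from the full-faithfulness of restriction of scalars along an epimorphism.

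For factor $(2)$, I would apply $\Hom_{A^e}(-, M)$ to the short exact sequence $0\to \ker\phi\to A\to B\to 0$ of $A^e$-modules. The $A^e$-projectivity of $\ker\phi$ forces $\Ext^i_{A^e}(\ker\phi, M) = 0$ for $i\geq 1$, which immediately gives the desired isomorphism in degrees $n\geq 2$. In degree zero, any $A^e$-linear map $f\colon A\to M$ with values in a $B$-bimodule $M$ satisfies $f(x) = \phi(x) f(1) = 0$ for $x\in \ker\phi$, so the restriction map $\Hom_{A^e}(A, M)\to \Hom_{A^e}(\ker\phi, M)$ is zero and $(2)$ is an isomorphism in degree zero too.

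The main obstacle is the case $n = 1$, where the five-term exact sequence leaves only a short exact sequence
\[ 0\to \Hom_{A^e}(\ker\phi, M) \to \Ext^1_{A^e}(B, M) \to \Ext^1_{A^e}(A, M)\to 0 \ . \]
To kill the leftmost term, I would invoke the tensor-hom adjunction $\Hom_{A^e}(\ker\phi, M) \cong \Hom_{B^e}(B\otimes_A \ker\phi\otimes_A B, M)$, valid since $M$ is a $B^e$-module. Tensoring $0\to \ker\phi\to A\to B\to 0$ of left $A$-modules with $B$ on the left over $A$ and using $\Tor^A_1(B, B) = 0$ (Lemma~\ref{lemma:vanishingTor}) together with $B\otimes_A A \cong B \cong B\otimes_A B$ (the latter by the epimorphism hypothesis) forces $B\otimes_A\ker\phi = 0$, whence $\Hom_{A^e}(\ker\phi, M) = 0$ and the proof concludes.
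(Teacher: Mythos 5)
Your proof is correct and follows essentially the same route as the paper's: factor the map as $\Ext^n_{B^e}(B,M)\to\Ext^n_{A^e}(B,M)\to\Ext^n_{A^e}(A,M)$, handle the first factor with Lemma~\ref{lemma:vanishingTor}, and handle the second via the long exact sequence of $0\to\ker\phi\to A\to B\to 0$ using $A^e$-projectivity of $\ker\phi$. Your justification that $\Hom_{A^e}(\ker\phi,M)=0$ (via $B\otimes_A\ker\phi=0$) fills in a step the paper merely asserts, so this is a slightly more complete version of the same argument.
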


For completeness, we include the proof, taken from \cite[Proposition~3.1]{10.21099/tkbjm/1496165029}.

\begin{proof}
   Consider the short exact sequence of $A^e$-modules
\[
0\to \ker \phi\to A\xrightarrow{\phi} B\to 0 \ .
\]
For any $B$-bimodule $M$,  apply the contravariant functor $\Hom_{A^e}(-,M)$. Then, we get a long exact sequence involving  $\Ext$-groups over $A^e$.
In particular,  $\phi$ induces  morphisms $\Hom_{A^e}(B,M)\to\Hom_{A^e}(A,M)$ and $\Ext^n_{A^e}(B,M)\to \Ext^n_{A^e}(A,M)$. We have $\Hom_{A^e}(\ker \phi, M)=0$ and  as $\ker\phi$ is a projective $A^e$-module, the groups $\Ext_{A^e}(\ker\phi,M)$ vanish. Then, by Lemma~\ref{lemma:vanishingTor}, the homological epimorphism $\phi$ induces isomorphisms $\Ext_{B^e}^n(B,M)\to \Ext_{A^e}^n(B,M) $; hence, the  morphisms \[\HH^n(B,M)=\Ext^n_{B^e}(B,M)\to \Ext^n_{A^e}(A,M)=\HH^n(A,M)\] for all $B$-bimodules~$M$ and all $n\geq 0$ follow.
\end{proof}

As a consequence of Proposition~\ref{prop:isoHH}, we get the following corollary -- see also~\cite{suárezalvarez2007applications}. 

\begin{cor}\label{cor:isoHH}
    Let $\phi \colon A\to B$ be an epimorphism of $k$-algebras. If $\phi$ is also a homological epimorphism, then it induces isomorphisms $\HH^i(B,B)\to \HH^i(A,B)$ for all $i\geq 0$. 
\end{cor}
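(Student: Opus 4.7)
The plan is to derive this statement as a direct specialization of Proposition~\ref{prop:isoHH}. That proposition already establishes, under the hypothesis that $\phi\colon A\to B$ is both an epimorphism and a homological epimorphism, a natural isomorphism $\HH^*(B,-)\to\HH^*(A,-)$ of functors defined on the category of $B$-bimodules, where in the right-hand side a $B$-bimodule is regarded as an $A$-bimodule by restriction of scalars along $\phi$. The corollary is obtained by evaluating this natural transformation at the canonical $B$-bimodule $M=B$, which immediately yields isomorphisms $\HH^i(B,B)\to\HH^i(A,B)$ for every $i\geq 0$.

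I would then briefly unpack why this evaluation is justified and what the resulting map is. The isomorphism in Proposition~\ref{prop:isoHH} arises from the edge morphism of the base-change spectral sequence of Corollary~\ref{cor:changeringspseq}, whose degeneration at the $E_2$-page is guaranteed by two inputs: since $\phi$ is an epimorphism, the multiplication map $\Tor_0^A(B,B)=B\otimes_A B\to B$ is an isomorphism of $B$-bimodules; and since $\phi$ is a homological epimorphism, Lemma~\ref{lemma:vanishingTor} gives $\Tor_q^A(B,B)=0$ for all $q\geq 1$. Therefore the spectral sequence collapses on the $p$-axis and the edge map $\HH^n(B,N)\to\HH^n(A,N)$ is an isomorphism for every $B$-bimodule $N$; taking $N=B$ produces the claim. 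By the naturality of the base-change spectral sequence in the second argument, this is precisely the restriction-of-scalars morphism $\phi^*$ discussed in Remark~\ref{rem:functo}.

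There is essentially no obstacle here: the proof is a one-line specialization of the previous proposition. The only thing worth double-checking in writing it up is that the $A$-bimodule structure on $B$ intended in $\HH^i(A,B)$ is indeed the one obtained from the $B$-bimodule structure by restriction along $\phi$, which is exactly the convention under which Proposition~\ref{prop:isoHH} is stated, so nothing further is required.
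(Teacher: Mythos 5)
Your proposal is correct and follows exactly the paper's own proof, which is the one-line specialization of Proposition~\ref{prop:isoHH} to the $B$-bimodule $M=B$. The additional unpacking of the spectral-sequence degeneration is accurate but not needed beyond what the proposition already provides.
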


\begin{proof}
    Apply Proposition~\ref{prop:isoHH} with $B=M$ as $B$-module.
\end{proof}

More precisely, the isomorphisms in Corollary~\ref{cor:isoHH} can be written as the composition
\begin{equation}\label{eq:extiso}
\HH^i(B)=\Ext^i_{B^e}(B,B)\to \Ext^i_{A^e}(B,B)\to \Ext^i_{A^e}(A,B)=\HH^i(A,B)
\end{equation}
where the first map is the natural isomorphism of Lemma~\ref{lemma:vanishingTor}, induced by restriction of scalars, and the second map is the morphism 
induced by $\phi$ by applying the functor $\Hom_{A^e}(-,B)$.

Let now $I$ be the kernel of a morphism $\phi\colon A\to B$ of $k$-algebras and consider the exact sequence 
\[
0\to I\to A\to B\to 0 \ .
\]
After applying the functor $\Hom_{A^e}(A,-)$, we get the long exact sequence
\[
0\to \HH^0(A,I)\to \HH^0(A)\to \HH^0(A,B)\to\HH^1(A,I)\to\HH^1(A)\to \HH^1(A,B)\to\dots
\]
and from this long exact sequence, for each $i\geq 0$, the commutative diagrams
\begin{equation}\label{eq:digrHH}
 \begin{tikzcd}
 & & & \HH^i(A,B) \arrow[dr, dotted, "\cong"]\arrow[rr]&  & \HH^{i+1}(A,I) \\
 \HH^{i}(A,I)\arrow[rr] & & \HH^i(A,A) \arrow[ur]\arrow[rr, dotted]&  & \HH^i(B,B) \arrow[ur,dotted]  &   
 \end{tikzcd}    
\end{equation}
 The dotted isomorphism is the inverse of the isomorphism in Eq.~\eqref{eq:extiso}, and the other dotted arrows are the compositions of the resulting maps, making all the triangular diagrams commute. The obtained map $\HH^i(A,A)\to\HH^i(B,B)$ also has a more familiar description, in terms of change of scalars, as we now shall explain. 
First, if $M$ is a $B$-bimodule, then a morphism $\phi\colon A\to B$ of $k$-algebras induces a $A$-bimodule structure on~$M$, which we make explicit by writing $\phi^*(M)$. 
Consider also the induced morphism $\phi^e\colon A^e\to B^e$.

 \begin{rem}\label{rem:la}
     The restriction functor ${\phi^e}^*$ along a morphism $\phi^e\colon A^e\to B^e$ has a left adjoint given by 
     \[
     B^e\otimes_{A^e} - \cong B\otimes_A - \otimes_A B \ . 
     \]
 \end{rem}

 The morphism $\phi\colon A\to B$ induces a map $\phi^*\colon \HH^*(B,M)\to \HH^*(A,\phi^*(M))$ of Hochschild cohomology groups. In view of Eq.~\eqref{eq:HHExt}, this can be written as $\Ext_{B^e}^*(B,M)\to \Ext_{A^e}^*(A,\phi^*(M))$.
 Using  the bar resolution $\mathbb{B}A$ of $A$, for each $n$, we have 
 \[
 \Ext_{A^e}^n(A,\phi^*(M))= H^n\Hom_{A^e}(\mathbb{B}A,\phi^*(M)) \ .
 \]
  Then, the adjuction  of Remark~\ref{rem:la} -- see also \cite[Lemma~8.2]{HERMANN2016687} -- induces an isomorphism
  \[
  H^n\Hom_{A^e}(\mathbb{B}A,\phi^*(M))\cong H^n\Hom_{B^e}(B\otimes_A\mathbb{B}A\otimes_A B,M)
  \]
  of cohomology groups, and, by \cite[Lemma~7.13]{HERMANN2016687}, $B\otimes_A\mathbb{B}A\otimes_A B$ yields a projective resolution of $B$ over $B^e$ -- provided $\phi$ is a homological epimorphism. As a consequence, we get an isomorphism $H^n\Hom_{B^e}(B\otimes_A\mathbb{B}A\otimes_A B,M)\cong\Ext^n_{B^e}(B,M)$. The induced map
  \[
 \Ext_{A^e}^n(A,\phi^*(M))\to  \Ext^n_{B^e}(B,M)
  \]
 yields an inverse of the isomorphism~\eqref{eq:extiso}
 when $M=B$, and the following version of Corollary~\ref{cor:isoHH}: 

\begin{lemma}[{\cite[Lemma~8.2]{HERMANN2016687}}]\label{lemma:fakfka}
    Let $A$ be a $k$-algebra, $I\subseteq A$ a two-sided ideal, and $\pi\colon A\to B$ the quotient. Assume that both $A$ and $B\coloneqq A/I$ are projective as $k$-modules. Then, if $\Tor^{A}_i(B,B)=0$ for all $i\geq 1$,  the functor
    \[
    B\otimes_A -\otimes_A B\colon \Mod(A^e)\to \Mod(B^e) \ ,
    \]
adjoint to the restriction of the scalars functor, gives rise to an isomorphism 
    \[
    \HH^*(A,B)\xrightarrow{\cong} \HH^*(B)
    \]
    of graded modules over $\HH^*(A)$.
\end{lemma}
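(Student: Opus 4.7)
The plan is to realize both sides of the claimed isomorphism as the cohomology of $\Hom$-complexes built from a single projective resolution, namely the normalized bar resolution $\mathbb{B}A\to A$ of $A$ in $\Mod(A^e)$, and to compare them via the adjunction recorded in Remark~\ref{rem:la}. Concretely, I would compute $\HH^n(A,B)=\Ext^n_{A^e}(A,B)$ as $H^n\Hom_{A^e}(\mathbb{B}A,B)$, where $B$ carries the $A$-bimodule structure pulled back along $\pi^e\colon A^e\to B^e$. The adjunction between restriction of scalars along $\pi^e$ and $B^e\otimes_{A^e}(-)\cong B\otimes_A(-)\otimes_A B$ yields a natural isomorphism of cochain complexes
\[
\Hom_{A^e}(\mathbb{B}A,B)\;\cong\;\Hom_{B^e}(B\otimes_A\mathbb{B}A\otimes_A B,\,B) \ .
\]

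The heart of the argument, and the main obstacle, is to verify that $B\otimes_A\mathbb{B}A\otimes_A B = B^e\otimes_{A^e}\mathbb{B}A$ is a projective resolution of $B$ in $\Mod(B^e)$. Termwise this is straightforward: the degree-$n$ component equals $B\otimes A^{\otimes n}\otimes B\cong B^e\otimes_k B^{\otimes n}$, which is $B^e$-projective because $B$, and hence $B^{\otimes n}$, is $k$-projective by hypothesis. Exactness is where the $\Tor$-vanishing assumption enters: the homology of $B^e\otimes_{A^e}\mathbb{B}A$ computes $\Tor^{A^e}_*(B^e,A)$, and the Cartan--Eilenberg identification $\Tor^{A^e}_*(B^e,A)\cong\Tor^A_*(B,B)$ (\cite[Corollary~IX.4.4]{cartanEilenberg}, already invoked in the proof of Corollary~\ref{cor:changeringspseq}) reduces the claim to the hypothesis. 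Thus the higher homology vanishes, and in degree zero one has $B\otimes_A B\cong B$ since $\pi$ is surjective and hence an epimorphism of $k$-algebras (Example~\ref{ex:surjepi}). Taking $\Hom_{B^e}(-,B)$-cohomology of this resolution computes $\Ext^*_{B^e}(B,B)=\HH^*(B)$, so composing with the displayed natural isomorphism produces $\HH^*(A,B)\xrightarrow{\cong}\HH^*(B)$.

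It remains to upgrade the isomorphism to one of graded $\HH^*(A)$-modules. I would observe that both module structures can be realized at the level of resolutions through Yoneda composition, and that the adjunction above intertwines this composition with its counterpart over $B^e$, via the comparison map $\mathbb{B}A\to B\otimes_A\mathbb{B}A\otimes_A B$ of $A^e$-projective resolutions of $A$ and $B$ respectively. This compatibility, combined with the standard description of the cup product via tensor products of resolutions, is essentially bookkeeping once the resolution in the previous paragraph is in place; I expect no genuine difficulty here beyond checking signs and naturality.
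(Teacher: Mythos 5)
Your proposal is correct and follows essentially the same route as the paper, which proves this statement by citing Hermann and sketching exactly your argument in the preceding discussion: compute $\HH^*(A,B)$ via the bar resolution $\mathbb{B}A$, transport across the adjunction $B^e\otimes_{A^e}(-)\cong B\otimes_A(-)\otimes_A B$ of Remark~\ref{rem:la}, and identify $B\otimes_A\mathbb{B}A\otimes_A B$ as a $B^e$-projective resolution of $B$. The only difference is that you actually verify the resolution property (termwise projectivity from $k$-projectivity of $B$, acyclicity from $\Tor^{A^e}_*(B^e,A)\cong\Tor^A_*(B,B)$ and the hypothesis), whereas the paper delegates this step to \cite[Lemma~7.13]{HERMANN2016687}.
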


In view of 
\cite[Theorem~8.11]{HERMANN2016687}, 
the resulting composition
\[
\HH^*(A,A)\to \HH^*(A,B)\xrightarrow{\cong} \HH^*(B,B)
\]
is induced by the derived tensor product $B\otimes_A^{\mathbb{L}} -\otimes_A^{\mathbb{L}} B$, concluding our description of the diagram \eqref{eq:digrHH}.
 
 In the assumptions of Lemma~\ref{lemma:fakfka}, we get a long exact sequence relating the Hochschild cohomology groups of $A$, $I$, and~$B$:

 \begin{cor}[{\cite[Theorem~8.11]{HERMANN2016687}}]\label{cor:lesquotients}
     Let $\pi\colon A\to A/I$ and set $B\coloneqq A/I$. If $\pi$ is a homological epimorphism, there is an induced  long exact sequence
\begin{equation}\label{eq:lesHH}
0\to \HH^0(A,I)\to \HH^0(A)\to \HH^0(B)\to\HH^1(A,I)\to\HH^1(A)\to\HH^1(B)\to\cdots
\end{equation}
of Hochschild cohomology groups. 
 \end{cor}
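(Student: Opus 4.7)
The plan is to obtain the long exact sequence by applying a standard Ext long exact sequence to the short exact sequence of $A$-bimodules
\[
0 \to I \to A \xrightarrow{\pi} B \to 0
\]
and then using Lemma~\ref{lemma:fakfka} to replace the relative term $\HH^*(A,B)$ by $\HH^*(B)$.

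More precisely, the first step is to view the displayed short exact sequence as a sequence in the category of $A^e$-modules, which is permissible because $I$, $A$, and $B=A/I$ are all $A$-bimodules and the maps are bimodule maps. Applying the contravariant functor $\Hom_{A^e}(A,-)$ and taking derived functors, I would get the long exact sequence of $\Ext$-groups
\[
\cdots \to \Ext^n_{A^e}(A,I) \to \Ext^n_{A^e}(A,A) \to \Ext^n_{A^e}(A,B) \to \Ext^{n+1}_{A^e}(A,I) \to \cdots
\]
beginning with the $\Hom$-groups in degree zero. Via the identification $\HH^n(A,M) \cong \Ext^n_{A^e}(A,M)$ from Equation~\eqref{eq:HHExt}, this immediately rewrites as the desired sequence except that the term $\HH^n(B)$ appears as $\HH^n(A,B)$.

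The second, and slightly more delicate, step is to identify $\HH^n(A,B)$ with $\HH^n(B)$ in a way compatible with the connecting maps. Since $\pi$ is a homological epimorphism, Lemma~\ref{lemma:fakfka} (equivalently, the vanishing of $\Tor_i^A(B,B)$ for $i\geq 1$ combined with the base-change description from Corollary~\ref{cor:changeringspseq}) produces natural isomorphisms $\HH^n(A,B) \xrightarrow{\cong} \HH^n(B)$ of graded $\HH^*(A)$-modules, induced by the derived tensor product $B\otimes_A^{\mathbb L}-\otimes_A^{\mathbb L} B$. Substituting these isomorphisms into the long exact sequence above produces the claimed sequence.

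The main obstacle, really the only nontrivial point, is to check that the identification $\HH^*(A,B) \cong \HH^*(B)$ is compatible with the maps in the long exact sequence, in particular that it intertwines the map $\HH^n(A) \to \HH^n(A,B)$ induced by $\pi$ with a well-defined map $\HH^n(A) \to \HH^n(B)$ and the connecting map $\HH^n(A,B) \to \HH^{n+1}(A,I)$ with a map out of $\HH^n(B)$. This is precisely the content of the commutative diagram~\eqref{eq:digrHH} preceding the statement, where the dotted isomorphism is the inverse of~\eqref{eq:extiso}; once naturality of the base-change isomorphism in the coefficient module is invoked, the two long exact sequences become isomorphic and the proof is complete.
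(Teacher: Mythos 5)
Your proposal is correct and follows essentially the same route as the paper: apply $\Hom_{A^e}(A,-)$ to the short exact sequence $0\to I\to A\to B\to 0$ to obtain the $\Ext$-long exact sequence, then substitute $\HH^*(B)$ for $\HH^*(A,B)$ via Lemma~\ref{lemma:fakfka} using the commutative diagram~\eqref{eq:digrHH}. (One trivial slip: $\Hom_{A^e}(A,-)$ is covariant, not contravariant, which is in fact what makes the sequence run in the stated direction.)
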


It is evident from Lemma~\ref{lemma:fakfka} and Corollary~\ref{cor:lesquotients} the importance of homological epimorphisms in Hochschild cohomology computations. Furthermore, homological epimorphisms are also related to the homological properties of the ideals at hand. For example, in view of Corollary~\ref{cor:lesquotients} and in the assumption that the map $\pi\colon A\to A/I$ is a homological epimorphism, to compare the Hochschild cohomologies of the algebras $A$ and $B$, we need to compute also the relative cohomology groups $\HH^*(A,I)$. We now recall the notion of homological ideal, which dates back to Auslander (see~\cite{Auslander1992HomologicalTO}, where such ideals were called \emph{strong idempotent ideals}).

\begin{defn}
    An ideal $I$ of $A$ is a \emph{homological ideal} if the quotient $\pi\colon A\to A/I$ is a homological epimorphism. 
\end{defn}

If $\phi\colon A\to B$ is a epimorphism and we set $I\coloneqq \ker\phi$, then we have $\Tor_1^A(B,B)\cong I/I^2$; hence, if $\phi$ is also a homological epimorphism, Lemma~\ref{lemma:vanishingTor} implies that $I$ is idempotent. We have a partial converse, which will be useful to us in the next section:

\begin{lemma}[{\cite[Proposition~2.2 (c)]{10.21099/tkbjm/1496165029}}]\label{lemma:homideals}
    Let $I$ be an ideal of $A$ which is a projective $A$-module, with $I/I^2=0$. Then, $\pi\colon A\to A/I$ is a homological epimorphism. 
\end{lemma}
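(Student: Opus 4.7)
The plan is to invoke the Tor-criterion for homological epimorphisms provided by Lemma~\ref{lemma:vanishingTor}. Setting $B \coloneqq A/I$, the map $\pi$ is surjective, hence is automatically an epimorphism of $k$-algebras by Example~\ref{ex:surjepi}; so the only point left to verify is that $\Tor_i^A(B,B)=0$ for every $i\geq 1$.

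My strategy is to feed the defining short exact sequence
\[
0 \to I \to A \to B \to 0
\]
into the long exact sequence of $\Tor_*^A(-,B)$. Two of the hypotheses enter here through standard vanishing inputs: $A$ is flat over itself, so $\Tor_i^A(A,B)=0$ for $i\geq 1$, and $I$ is $A$-projective by assumption, so $\Tor_j^A(I,B)=0$ for $j\geq 1$. For every $i\geq 2$ these two inputs sandwich $\Tor_i^A(B,B)$ between vanishing terms in the long exact sequence, so dimension shifting yields the desired vanishing at once.

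The remaining genuinely substantive case is $i=1$. The tail of the long exact sequence furnishes an injection $\Tor_1^A(B,B) \hookrightarrow I \otimes_A B$, so I am reduced to showing $I\otimes_A (A/I)=0$. This is where the assumption $I/I^2=0$ enters: tensoring the defining sequence of $B$ on the left with $I$ and using the flatness of $I$ to kill the resulting first $\Tor$-term, the image of the multiplication map $I\otimes_A I \to I\otimes_A A\cong I$ is precisely $I^2$, so $I\otimes_A (A/I)\cong I/I^2$, which vanishes by hypothesis. The main (essentially the only) non-routine step is this identification $I\otimes_A(A/I)\cong I/I^2$; everything else is a formal chase in the LES, after which Lemma~\ref{lemma:vanishingTor} closes the argument.
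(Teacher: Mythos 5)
Your argument is correct and follows essentially the same route as the paper's proof: apply $-\otimes_A B$ to the short exact sequence $0\to I\to A\to A/I\to 0$, use projectivity of $I$ to dimension-shift away $\Tor_i^A(B,B)$ for $i\geq 2$, and identify $\Tor_1^A(B,B)$ with $I/I^2=0$. Your write-up just makes two points explicit that the paper leaves implicit, namely that surjectivity of $\pi$ gives the epimorphism hypothesis needed for Lemma~\ref{lemma:vanishingTor}, and the right-exactness computation showing $I\otimes_A(A/I)\cong I/I^2$ (for which the flatness of $I$ is not actually needed).
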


\begin{proof}
    Take the short exact sequence
    \[
    0\to I\to A\to A/I\to 0
    \]
    and apply the functor $-\otimes_A B$. We get $\Tor_i^A(A/I,A/I)\cong \Tor_{i-1}^A(I,A/I)$ for all $i\geq 2$. On the other hand, $\Tor_1^A(A/I,A/I)\cong I/I^2$. Then, as $I$ is a projective $A$-module and $I/I^2=0$, all the $\Tor$-groups vanish.  
\end{proof}

 We now let $\Alg^{\Hepi}_k$ be the category of $k$-algebras and homological epimorphisms of algebras which are also surjections.
When $k$ is a commutative ring, we also consider the category $\Alg^{\Hepi,\mathrm{pr}}_k$, which is the full subcategory of $\Alg^{\Hepi}_k$  on algebras that are projective as $k$-modules; if $k$ is a field, the two categories coincide.   It was shown in \cite[Lemma~A.2]{arXiv:2104.06516} that Hochschild cohomology is functorial with respect to homological epimorphisms of (dg) categories, and the same arguments apply to the case of $k$-algebras, giving the following:
\begin{prop}\label{prop:coherentdiagrams}
      Let $k$ be a commutative ring. Then, for each $i\in \N$, Hochschild cohomology  
     \[
     \HH^i\colon \Alg^{\Hepi,\mathrm{pr}}_k\to \mathbf{Ab}
     \]
     yields a functor.
 \end{prop}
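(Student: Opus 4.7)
The plan is to define $\HH^i$ on morphisms and verify the two functorial axioms, relying throughout on the restriction-of-scalars isomorphism from Corollary~\ref{cor:isoHH}. For a surjective homological epimorphism $\phi\colon A\to B$ in $\Alg^{\Hepi,\mathrm{pr}}_k$, I would define $\HH^i(\phi)\colon \HH^i(A)\to\HH^i(B)$ as the composition
\[
\HH^i(A,A)\xrightarrow{\phi_*}\HH^i(A,\phi^*(B))\xrightarrow{\cong}\HH^i(B,B),
\]
where the first arrow is obtained by covariance of $\HH^i(A,-)$ applied to the $A$-bimodule map $\phi\colon A\to \phi^*(B)$, and the second is the inverse of the restriction-of-scalars isomorphism from Corollary~\ref{cor:isoHH}, which exists precisely because $\phi$ is a homological epimorphism. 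This is exactly the dotted composite $\HH^i(A,A)\to\HH^i(B,B)$ already appearing in diagram~\eqref{eq:digrHH}. Preservation of identities is immediate: $(\id_A)_*=\id$ and the isomorphism associated to $\id_A$ is the identity.

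For preservation of composition, given $A\xrightarrow{\phi}B\xrightarrow{\psi}C$ in $\Alg^{\Hepi,\mathrm{pr}}_k$, I would first check that $\psi\circ\phi$ again lies in $\Alg^{\Hepi,\mathrm{pr}}_k$: the composition of surjections is a surjection, and the composition $D^b(\phi)\circ D^b(\psi)=D^b(\psi\phi)$ of fully faithful functors is fully faithful, so $\psi\phi$ is a homological epimorphism. Then I would verify the identity $\HH^i(\psi)\circ\HH^i(\phi)=\HH^i(\psi\phi)$ by splitting it into two pieces: (i) the functoriality of $\HH^i(A,-)$ in the bimodule argument, which gives $(\psi\phi)_*=\psi_*\circ\phi_*$ at the level of $\HH^i(A,A)\to\HH^i(A,\phi^*\psi^*(C))$, and (ii) the naturality, in the $B$-bimodule argument, of the isomorphism $\HH^i(A,-)\cong\HH^i(B,-)$ of functors from Proposition~\ref{prop:isoHH}, which commutes $\psi_*$ past the restriction-of-scalars isomorphism associated to $\phi$.

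The principal obstacle is ensuring that the inverse restriction-of-scalars isomorphisms from Corollary~\ref{cor:isoHH} themselves compose coherently, i.e.\ that the isomorphism $\HH^i(A,C)\xrightarrow{\cong}\HH^i(C,C)$ coming from $\psi\phi$ agrees with the two-step composition $\HH^i(A,C)\xrightarrow{\cong}\HH^i(B,C)\xrightarrow{\cong}\HH^i(C,C)$ coming from $\phi$ and then from $\psi$. This is cleanest to verify via the explicit description in Lemma~\ref{lemma:fakfka}: the inverse isomorphism is induced by the derived bimodule tensor product $B\otimes^{\mathbb{L}}_A-\otimes^{\mathbb{L}}_A B$, which is well-defined because all algebras in sight are $k$-projective, and the natural transitivity isomorphism
\[
C\otimes^{\mathbb{L}}_B\bigl(B\otimes^{\mathbb{L}}_A-\otimes^{\mathbb{L}}_A B\bigr)\otimes^{\mathbb{L}}_B C\cong C\otimes^{\mathbb{L}}_A-\otimes^{\mathbb{L}}_A C
\]
of iterated derived tensor products then yields the required compatibility after passing to cohomology, completing the proof that $\HH^i$ is a functor on $\Alg^{\Hepi,\mathrm{pr}}_k$.
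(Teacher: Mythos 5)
Your proof is correct and follows essentially the same route as the paper, which delegates the argument to the cited Lemma~A.2 of arXiv:2104.06516 and summarizes it as: the extension-of-scalars functor $B\otimes^{\mathbb{L}}_A-\otimes^{\mathbb{L}}_A B$, left adjoint to restriction of scalars, carries diagonal bimodules to diagonal bimodules along homological epimorphisms. Your composite $\HH^i(A,A)\xrightarrow{\phi_*}\HH^i(A,B)\xrightarrow{\cong}\HH^i(B,B)$ is exactly the map the paper identifies (via diagram~\eqref{eq:digrHH} and Lemma~\ref{lemma:fakfka}) with the one induced by that derived tensor product, and your transitivity argument for preservation of composition is the same coherence the cited lemma supplies.
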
 
 
 The primary argument relies on the fact that the extension of scalar functors, which serves as the left adjoint to the restriction of scalars, maps diagonal bimodules to diagonal bimodules through homological epimorphisms, see~\cite[Lemma~A.1]{arXiv:2104.06516} and \cite[Proposition~3.4]{zbMATH07286825}. 
   We wish to recall here that functoriality for Hochschild cohomology of categories was first, and more extensively, shown by Keller~\cite{keller2003derived,zbMATH07431618}, but more general results have also been shown more recently -- see, eg.~\cite{zbMATH07522563,arXiv:2509.14620}.

\subsection{Diagrammatic Hochschild cohomology with homological epimorphisms}

Let  $\bC$ be a small category and $\cA$ a presheaf of $k$-algebras on $\bC$.
We say that $\cA$ is surjective if all the induced maps $\cA_{c,d}\colon \cA(d)\to\cA(c)$ are surjective. Analogously, we  define homological epimorphisms for presheaves:

\begin{defn}
    The presheaf $\cA\colon \bC^\op\to \Alg_k$ is a \emph{homological epimorphism} if the induced map $\cA_{c,d}\colon \cA(d)\to\cA(c)$ is a homological epimorphism of $k$-algebras for all morphisms $c\to d$ in $\bC$.
\end{defn}

A homological homomorphism $\cA$ is a functor $\cA\colon \bC^\op\to \Alg_k^\Hepi$.
In view of 
Proposition~\ref{prop:coherentdiagrams}, 
composition of Hochschild cohomology in degree~$i$, for $i\geq 0$, with a presheaf $\cA\colon \bC^\op\to\Alg^{\Hepi}_k$ 
yields by composition a functor 
\[
\HH^i(\cA(-), \cA(-))=\HH^i\circ\cA\colon\bC^\op\to\Ab
\]
which, by slight abuse of notation, we denote by $\HH^i(\cA,\cA)$. 

\begin{thm}\label{thm:selfduality}
Let $\cA\colon \bC^\op\to\Alg^{\Hepi, \mathrm{pr}}_k$ be a surjective homological epimorphism. 
Then,  the higher limits $\mathrm{lim}_{\Tw\bC}^p \cHH^q(\cA,\cA)$ and $ \mathrm{lim}_{\bC}^p \HH^q(\cA(-),\cA(-))$ are isomorphic. In particular, we have a chain of isomorphisms
\[
H^p_{BW}(\bC,\cHH^q(\cA,\cA))\cong\mathrm{lim}_{\Tw\bC}^p \cHH^q(\cA,\cA)\cong \mathrm{lim}_{\bC}^p \HH^q(\cA,\cA) \cong H^p(\bC,\HH^q(\cA,\cA)) \ ,
\]
for all $p,q\geq 0$. 
\end{thm}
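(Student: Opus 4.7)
The strategy is to leverage the functoriality of Hochschild cohomology on $\Alg^{\Hepi,\mathrm{pr}}_k$ (Proposition~\ref{prop:coherentdiagrams}) to show that the natural system $\cHH^q(\cA,\cA)$ on $\Tw\bC$ factors, up to a natural isomorphism, through the source projection $s\colon\Tw\bC\to\bC^\op$, and then to apply a cofinality argument to reduce higher limits over $\Tw\bC$ to higher limits over $\bC^\op$ -- which are, by convention, the functor cohomology groups $H^p(\bC,\HH^q(\cA,\cA))$.

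For the first step, I would construct a natural isomorphism $\cHH^q(\cA,\cA)\cong \HH^q(\cA,\cA)\circ s$ of natural systems. Pointwise, for each arrow $f\colon c\to d$ of $\bC$, the homological epimorphism $\cA(f)\colon\cA(d)\to\cA(c)$ yields, via Corollary~\ref{cor:isoHH}, an isomorphism $\HH^q(\cA(c),\cA(c))\xrightarrow{\cong}\HH^q(\cA(d),\cA(c))=\cHH^q(\cA,\cA)(f)$. The delicate point -- which I expect to be the main technical obstacle -- is the naturality of this assignment with respect to a morphism $(\alpha,\beta)\colon f\to g$ of $\Tw\bC$, where $\alpha\colon d\to d'$ and $\beta\colon c'\to c$ lie in $\bC$. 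Writing $(\alpha,\beta)=(1_{d'},\beta)\circ(\alpha,1_c)$ in $\Tw\bC$ splits the compatibility square into two cases. The $(\alpha,1_c)$ case reduces to the compatibility of Corollary~\ref{cor:isoHH} with the composition of homological epimorphisms $\cA(d')\to\cA(d)\to\cA(c)$, which is visible from the explicit formula~\eqref{eq:extiso} as the Ext-functor map induced by the algebra maps. The $(1_{d'},\beta)$ case expresses the naturality of the Hochschild functoriality of Proposition~\ref{prop:coherentdiagrams}, and is handled by unfolding the extension-of-scalars adjunction $B\otimes_A - \otimes_A B\dashv{\phi^e}^\ast$ of Remark~\ref{rem:la} through Lemma~\ref{lemma:fakfka}.

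For the second step, I would verify that the source projection $s\colon\Tw\bC\to\bC^\op$ admits a left adjoint $\iota\colon c\mapsto 1_c$: the bijection $\Hom_{\Tw\bC}(1_c,f)\cong\Hom_{\bC^\op}(c,s(f))$ sending $(\alpha,\beta)\mapsto\beta$ is immediate from the definition of the twisted arrow category. Consequently, for every $c\in\bC^\op$ the comma category $c\downarrow s$ has the unit $(1_c,1_c)$ as an initial object and is therefore contractible. By the cohomological form of Quillen's Theorem~A (homotopy-initiality of $s$), restriction along $s$ induces isomorphisms $\lim^p_{\bC^\op}F\xrightarrow{\cong}\lim^p_{\Tw\bC}(F\circ s)$ for every functor $F\colon\bC^\op\to\Ab$ and every $p\geq 0$. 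Applying this with $F=\HH^q(\cA(-),\cA(-))$ and combining with the natural-system isomorphism from the first step and with the identifications in Eq.~\eqref{eq:isonat=fun} yields the claimed chain of isomorphisms.
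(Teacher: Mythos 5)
Your first step is essentially the paper's own argument: the paper constructs precisely the natural isomorphism between $\cHH^q(\cA,\cA)$ and $(\HH^q\circ\cA)\circ\pi$, where $\pi$ is the source projection $\Tw\bC\to\bC^\op$, using Corollary~\ref{cor:isoHH} and Lemma~\ref{lemma:fakfka} pointwise and checking compatibility on a general morphism $(\alpha,\beta)$ of $\Tw\bC$ exactly as you propose. The gap is in your second step. The assignment $c\mapsto 1_c$ is not a left adjoint to $s$; it is not even a functor $\bC^\op\to\Tw\bC$, since a morphism $1_c\to 1_{c'}$ in $\Tw\bC$ is a pair $(\alpha,\beta)$ with $\alpha\circ\beta=1_{c'}$, and such an $\alpha$ need not exist over a given $\beta$. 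Concretely, the claimed bijection $\Hom_{\Tw\bC}(1_c,f)\cong\Hom_{\bC^\op}(c,s(f))$ fails: $\Hom_{\Tw\bC}(1_c,f)$ is the set of factorizations of $f$ through $c$, and for $\bC=[1]$, $c=1$, $f=1_0$ this set is empty while $\Hom_{[1]^\op}(1,0)$ is a singleton. There is a second, independent slip: for comparing (higher) \emph{limits} along $s$ the relevant comma categories are those of objects $(f,\,s(f)\to c)$ lying \emph{over} $c$, not the categories $(c\downarrow s)$ of objects under $c$ whose contractibility governs colimits. So as written, neither the adjunction nor the cofinality criterion you invoke is available.

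The statement you are after in that step is nonetheless true, and it is exactly what the paper outsources to Baues--Wirsching: for a natural system pulled back along $\Tw\bC\to\bC^\op$, Baues--Wirsching cohomology agrees with functor cohomology. If you want a self-contained argument in the spirit of yours, observe that $s$ is a Grothendieck (op)fibration whose fibre over $c$ is the coslice category of morphisms with source $c$, which has $1_c$ as initial object and is therefore contractible; the relevant comma categories are homotopy equivalent to these fibres (one can also check their contractibility directly via the zig-zag of natural transformations $\mathrm{id}\Rightarrow R\Leftarrow\mathrm{const}$ with $R(f,\phi)=(f\phi,1_c)$). Even more economically, the Baues--Wirsching cochain complex with coefficients in a pulled-back natural system is \emph{literally} the Roos complex of the paper's preliminary section, so no cofinality argument is needed at all. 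With either repair, your overall route coincides with the paper's.
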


\begin{proof}
We only need to show the second isomorphism.    Let $c\to d$ be an arrow of $\bC$ and consider the morphism of $k$-algebras $\cA_{c,d}\colon \cA(d)\to\cA(c)$ induced by $c\to d$. By assumption, this is a surjective homological epimorphism. As shown in the previous section, for all $i\in \N$ we have  natural isomorphisms $  \HH^i(\cA(d),\cA(c))\cong \HH^i(\cA(c),\cA(c))$, and by Proposition~\ref{prop:coherentdiagrams},  we get  a functor 
\[
\HH^i\circ \cA\colon \bC^\op\to \Ab
\]
assigning to an object $c$ of $\bC$ the Hochschild cohomology group $\HH^i(\cA(c),\cA(c))$;  to a morphism $c\to d$ of $\bC$ it assigns the map $\HH^i(\cA(d))\to\HH^i(\cA(c))$ induced by extension of scalars.

Consider now the natural system $\cHH^q(\cA,\cA)$ of Eq.~\eqref{eq:natusystHH}. We want to show that there is a natural isomorphism of functors $\eta\colon \cHH^q(\cA,\cA)\to\pi\circ\HH^q\circ \cA$, given by the diagram
\begin{center}
 \begin{tikzcd}
\Tw\bC \arrow[rr, "{\cHH^q(\cA,\cA)}"]\arrow[d, "\pi"'] & & \Ab \\
 \bC^\op\arrow[urr, "\HH^q\circ \cA"']  & & 
 \end{tikzcd}    
 \end{center} 
 where $\pi\colon \Tw\bC\to\bC^\op$ is the standard projection. In fact, for any object $c\to d$ of $\Tw\bC$ and $q\in\N$, we have an isomorphism $\eta(c\to d)\colon \HH^q(\cA(d),\cA(c))\xrightarrow{\cong} \HH^q(\cA(c),\cA(c))$: this holds by Corollary~\ref{cor:isoHH}, and, more explicitly, by (the inverse of the map in) Lemma~\ref{lemma:fakfka}. 
For a morphism  
\begin{equation}\label{eq:mortw}
 \begin{tikzcd}
d \arrow[r, "\alpha"] & d' \\
 c\arrow[u, "f"]  & c' \arrow[l, "\beta"] \arrow[u, "g"']
 \end{tikzcd}    
 \end{equation} 
 in $\Tw\bC$, we get the commutative diagram of algebras
 \begin{center}
 \begin{tikzcd}
\cA(d) \arrow[d, "f"'] & \cA(d') \arrow[d, "g"]\arrow[l, "\alpha"']\\
 \cA(c) \arrow[r, "\beta"'] & \cA(c')  
 \end{tikzcd}    
 \end{center} 
 All maps are surjective homological epimorphisms. Recall that  Hochschild cohomology is covariant in the second entry and contravariant in the first entry. Then, consider the resulting diagram
 \begin{center}
 \begin{tikzcd}
 \HH^i(\cA(d),\cA(c)) \arrow[d, "\eta", "\cong"']\arrow[r, "\beta_*"]& \HH^i(\cA(d),\cA(c'))\arrow[r,"\alpha^*"]  & \HH^i(\cA(d'),\cA(c')) \arrow[d, "\cong", "\eta"'] \\
 \HH^i(\cA(c),\cA(c)) \arrow[r, "\beta_*"]& \HH^i(\cA(c),\cA(c'))\arrow[u,"f^*"] \arrow[r, dotted, bend right, "\cong"]& \HH^i(\cA(c'),\cA(c'))\arrow[l, "\beta^*"']
 \end{tikzcd}    
 \end{center}
 where all maps in the right square are isomorphisms by restriction of scalars. The upper composition yields the value of the functor $\cHH^i$ at the morphism \eqref{eq:mortw} of the twisted arrow category. The rightmost bottom map $\beta^*$ is inverse to the map $\HH^i(\cA(c),\cA(c'))\to \HH^i(\cA(c'),\cA(c'))$ of Lemma~\ref{lemma:fakfka}, and, such inverse, is coherent with the natural transformation $\eta$. In view of \cite[Theorem~8.11]{HERMANN2016687}, the resulting composition $\HH^i(\cA(c),\cA(c))\to \HH^i(\cA(c'),\cA(c'))$ is the map between the Hochschild cohomology groups of \cite[Theorem~7.6]{HERMANN2016687} induced by the derived tensor product. We get a commutative diagram
  \begin{center}
 \begin{tikzcd}
 \HH^i(\cA(d),\cA(c)) \arrow[d, "\cong"']\arrow[rrr]& & & \HH^i(\cA(d'),\cA(c')) \arrow[d, "\cong"]  \\
 \HH^i(\cA(c)) \arrow[rrr, "\cA(c')\otimes_{\cA(c)}^{\mathbb{L}} -\otimes_{\cA(c)}^{\mathbb{L}} \cA(c')"]& & & \HH^i(\cA(c'))
 \end{tikzcd}    
 \end{center}
 which implies that 
 the natural system $\cHH^i(\cA,\cA)$ is obtained -- up to natural isomorphisms -- by pulling back the functor $\HH^i(\cA,\cA)$ along the composition 
 \[\pi\colon \Tw\bC\to \bC^{\op}\times\bC\to \bC^\op \  . \] In such a case, Baues-Wirsching cohomology groups with coefficients in a natural system agree with functor cohomology groups (see~\cite{BAUES1985187}), and as isomorphic natural systems yield isomorphic Baues-Wirsching cohomology groups, the statement follows. 
\end{proof}

As a corollary of Theorem~\ref{thm:spseqtoHH} and Theorem~\ref{thm:selfduality}, we get the main result of the section, which clarifies the structure of the spectral sequence converging to  diagrammatic Hochschild cohomology in terms of functor cohomology groups (rather than the more general Baues-Wirshing cohomology groups);

\begin{cor}\label{cor:ssseqwithlims}
Let $\bC$ be a finite category,  $\cA$  a presheaf of $k$-algebras on $\bC$ which is a surjective homological epimorphism. 
    Then, there is a  spectral sequence of cohomological type
    \[
    IE_2^{p,q}=\mathrm{lim}_{\bC}^p \HH^q(\cA(-),\cA(-))
    \]
    converging to the diagrammatic Hochschild cohomology $\HH_{\GS}^*(\cA,\cA)$ of $\cA$.    
\end{cor}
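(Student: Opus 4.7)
The plan is to derive this corollary by directly combining Theorem~\ref{thm:spseqtoHH} with Theorem~\ref{thm:selfduality}. No new machinery needs to be introduced; the argument amounts to a substitution of the $E_2$-page, together with verifying that the hypotheses of the two theorems align.

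First, I would invoke Theorem~\ref{thm:spseqtoHH} with $\cM = \cA$. This immediately produces a first-quadrant spectral sequence
\[
IE_2^{p,q} \cong \mathrm{lim}_{\Tw\bC}^p \cHH^q(\cA,\cA)
\]
converging to the diagrammatic Hochschild cohomology $\HH_{\GS}^*(\cA,\cA)$. Convergence and the first-quadrant property will be inherited directly from this application, so nothing further is required for them.

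Next, I would apply Theorem~\ref{thm:selfduality} to identify the $E_2$-page with higher limits over $\bC$ itself rather than over the twisted arrow category. For this step to be legitimate, I need the presheaf $\cA$ to land in $\Alg_k^{\Hepi,\mathrm{pr}}$. This is automatic under the standing conventions of the paper: since $k$ is a field, every $k$-algebra is projective as a $k$-module, so the categories $\Alg_k^{\Hepi}$ and $\Alg_k^{\Hepi,\mathrm{pr}}$ coincide, and the assumption that $\cA$ is a surjective homological epimorphism is exactly the hypothesis needed. Theorem~\ref{thm:selfduality} then supplies the chain of isomorphisms
\[
\mathrm{lim}_{\Tw\bC}^p \cHH^q(\cA,\cA) \;\cong\; \mathrm{lim}_{\bC}^p \HH^q(\cA(-),\cA(-))
\]
for all $p,q \geq 0$, which substituted into the previous spectral sequence yields the desired identification of $IE_2^{p,q}$.

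Because the proof is entirely a splicing of two previously established results, I do not anticipate any genuine technical obstacle. The only point requiring care is the bookkeeping around the projectivity hypothesis in Theorem~\ref{thm:selfduality}, which as noted is trivially satisfied under the paper's blanket assumption that $k$ is a field; if one later wished to relax that assumption to a commutative ring, then one would need to add projectivity of $\cA(c)$ over $k$ for each object $c \in \bC$ to the statement of the corollary.
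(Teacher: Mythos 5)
Your proposal is correct and follows exactly the route the paper takes: Corollary~\ref{cor:ssseqwithlims} is stated there as an immediate consequence of splicing Theorem~\ref{thm:spseqtoHH} (with $\cM=\cA$) into Theorem~\ref{thm:selfduality}, which is precisely your argument. Your remark that $\Alg^{\Hepi}_k$ and $\Alg^{\Hepi,\mathrm{pr}}_k$ coincide under the standing assumption that $k$ is a field is a correct and worthwhile piece of bookkeeping that the paper leaves implicit.
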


The advantage of Corollary~\ref{cor:ssseqwithlims} is that it allows us to use properties of functor cohomology in computing the pages of the spectral sequence. In fact, we have the following direct corollary;

\begin{cor}\label{cor:spseqterminal}
    Let $\bC$ be a category with terminal object $\bar{c}$, and $\cA$  a presheaf of $k$-algebras on $\bC$ which is a surjective homological epimorphism.
    Then, the spectral sequence $IE_r^{*,*}$ has 2-term given by
    \[
    IE_2^{p,q}=
    \begin{cases}
    0 &\text{ if } p>0\\
    \HH^q(\cA(\bar{c}),\cA(\bar{c})) &\text{ otherwise} 
    \end{cases}
    \]
    for all $p,q \in \N$. 
\end{cor}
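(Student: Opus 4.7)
The plan is to combine Corollary~\ref{cor:ssseqwithlims} with the vanishing result for higher limits on categories with terminal objects stated in Remark~\ref{rem:fucntorterminal}. First, I would invoke Corollary~\ref{cor:ssseqwithlims}, which identifies the second page as $IE_2^{p,q} \cong \mathrm{lim}_{\bC}^p \HH^q(\cA(-),\cA(-))$. The composite $\HH^q(\cA(-),\cA(-))\colon \bC^\op \to \mathbf{Ab}$ is a genuine contravariant functor on $\bC$ by Proposition~\ref{prop:coherentdiagrams}, which applies since $\cA$ is a surjective homological epimorphism valued in $\Alg_k^{\Hepi,\mathrm{pr}}$ (recall that over the field $k$ every $k$-algebra is $k$-projective, so $\Alg_k^{\Hepi}=\Alg_k^{\Hepi,\mathrm{pr}}$).

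Next, since $\bC$ has a terminal object $\bar{c}$, the contravariant version of Remark~\ref{rem:fucntorterminal} applies verbatim to $\HH^q(\cA(-),\cA(-))$: its higher limits $\mathrm{lim}_{\bC}^p$ vanish for all $p \geq 1$, and the $p=0$ limit equals the value of the functor at the terminal object, namely $\HH^q(\cA(\bar{c}),\cA(\bar{c}))$. Substituting these values into the spectral sequence identification from Corollary~\ref{cor:ssseqwithlims} immediately gives the claimed description of $IE_2^{p,q}$.

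I do not anticipate any significant obstacle here, as the corollary follows by direct substitution once the two cited results are in place. The only subtlety worth pointing out is that the application of Remark~\ref{rem:fucntorterminal} requires the contravariant reformulation (since $\cA$ is a presheaf, not a covariant diagram), and hence the relevant hypothesis is the presence of a \emph{terminal} object of $\bC$ (which becomes initial in $\bC^\op$); this matches precisely the assumption in the statement.
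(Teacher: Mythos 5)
Your proposal is correct and follows exactly the paper's own argument: identify $IE_2^{p,q}$ with the higher limits $\mathrm{lim}_{\bC}^p \HH^q(\cA(-),\cA(-))$ via Corollary~\ref{cor:ssseqwithlims}, then apply the contravariant version of Remark~\ref{rem:fucntorterminal} for the terminal object. Your extra remarks on the role of Proposition~\ref{prop:coherentdiagrams} and on $\Alg_k^{\Hepi}=\Alg_k^{\Hepi,\mathrm{pr}}$ over a field are accurate clarifications of hypotheses the paper leaves implicit.
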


\begin{proof}
    The statement follows from Remark~\ref{rem:fucntorterminal} and Corollary~\ref{cor:ssseqwithlims}:  functor cohomology, which agrees with the higher limits by~\cite{Gabriel1967CalculusOF},  is $0$ in positive degrees, and it is given by the evaluation of the functor at the terminal object in degree~0. 
\end{proof}

In particular, if the category $\bC$ has a terminal object, and $\cA$ is a presheaf of $k$-algebras on $\bC$ which is a surjective homological epimorphism, then the associated spectral sequence collapses at the second page, where it agrees with the standard Hochschild cohomology of the terminal object. As a consequence, the target diagrammatic Hochschild cohomology also reduces to the standard Hochschild cohomology of the terminal object. This fact extends the preliminary result of Corollary~\ref{cor:highercolumns} to all (not necessarily free)  categories with a terminal object -- but at the cost of restricting the class of algebra morphisms at hand. 

\begin{rem}\label{rem:colimits}
    Recall that the higher (co)limits compute the (co)homology groups of the homotopy (co)limits, hence Corollary~\ref{cor:spseqterminal} can be stated by saying that $IE_2^{*,q}$ is given by the cohomology groups of the homotopy limit of $\HH^q(\cA(-),\cA(-))$. One may ask whether this is the same as computing the limit, hence whether the second page can be described in terms of Hochschild cohomology of the limit algebra (which is $\cA(\overline{c})$ in the case of a terminal object $\overline{c}$). However, this is not the case because Hochschild cohomology does not commute with taking limits of algebras. For counterexamples,  
    see, e.g.~\cite [Example~1.2.1]{zbMATH05574750}.  
\end{rem}

Let $\phi\colon B\to A$ be a surjective homological epimorphism and  $[1]$  the arrow category  $0\to 1$.

\begin{example}
     Consider the presheaf $\cA\colon [1]^{\op}\to\Alg^{\Hepi}_k $ defined by $\cA(0)=A$, $\cA(1)=B$, and on the unique non-trivial morphism  by $\cA_{0,1}=\cA(1\to 0)=\phi$. To compute the diagrammatic Hochschild cohomology of $\cA$, consider the natural system $\cHH^q(\cA,\cA)$ of Eq.~\eqref{eq:natusystHH}. By Theorem~\ref{thm:selfduality}, this is equivalent to considering the system $\HH^q(\cA,\cA)=\HH^q\circ \cA \circ \pi$, where the projection functor $\pi\colon \Tw\bC\to \bC^\op$ sends a morphism $c\to d$ in $[1]$ to the source~$c$.  The associated spectral sequence $IE_r^{*,*}$  collapses at the second page. The $0$-th column of the first page is given by $IE_1^{0,*}=\HH^*(A)\oplus\HH^*(B)$, whereas the non-degenerate part of the first column is given by $IE_1^{1,*}=\HH^*(B,A)$; note that this group can be identified with $ \HH^*(A,A)$ as $\phi\colon B\to A$ is a surjective homological epimorphism.
    A direct computation shows that the second page of the spectral sequence has only one non-trivial column, which is the $0$-th column; in particular, we have $IE_2^{0,*}=\HH^*(B)$; as also predicted by Corollary~\ref{cor:spseqterminal}. The object~$1$ is indeed the terminal object of the category $[1]$, and the diagrammatic Hochschild cohomology of $\cA$ agrees with the Hochschild cohomology of $B$. 
\end{example}

Arguing as in the previous example, more generally, we get:

\begin{example}
    Let  $\bC=[n]$ be the category from Example~\ref{ex:[n]}, and $\cA$ a surjective homological epimorphism on $[n]$. Then  the diagrammatic Hochschild cohomology $\HH_{\GS}^*(\cA,\cA)$ is the classical Hochschild cohomology of $\cA(n)$.
\end{example}

\section{Diagrams of  incidence algebras}\label{sectionapplications}

Motivated by applications in applied topology, in this section, we use the general theory of homological epimorphisms to the special case of diagrams of incidence algebras (of simplicial complexes). 

Let $P$ be a finite poset. 
Observe that it can be seen as a category $\mathbf{P}$ with the same objects of $P$, by declaring a morphism $p\to q$ in $\mathbf{P}$ if and only if $p\leq q$ in $P$. The nerve of $P$ is defined as the nerve of the category $\mathbf{P}$; that is, the simplicial complex $\Nerve(P)$ in which the $n$-simplices are given by the chains $\sigma=p_0<\dots<p_n$ of elements of $P$. 
Let $k$ be a field. To a category $\bC$ we associate a $k$-algebra $k\bC$ as follows. 

\begin{defn}
   The \emph{category algebra} $k\bC$ is the free $k$-module with basis the set of morphisms of $\bC$. The product on the basis elements is given by
    \[f \cdot g = 
    \begin{cases}
        f \circ g & \text{when the composition exists in $\bC$}\\
        0 & \text{otherwise}    
    \end{cases}\]
    and then it is linearly extended to the whole $k\bC$. 
\end{defn}

Examples of category algebras are the classical group algebras -- viewing a group as a category with a single object -- and the incidence algebras of posets~\cite{rota}. In Rota's original definition, the incidence algebra of a poset $P$ is given by all the possible $k$-valued functions on the intervals of $P$ with product $f\ast g(x,y)\coloneqq \sum_{x\leq z\leq y}f(x,z)g(z,y)$. However, the two definitions are equivalent for (locally) finite posets. In the following, by incidence algebra of a poset~$P$, we shall mean the category algebra of the (category associated with the) poset~$P$, and we write $I(P):=kP$. 

\begin{rem}
    The incidence algebra of a poset $P$ is finite-dimensional (as a $k$-algebra) if and only if $P$ is finite.
\end{rem}

\begin{rem}\label{rem:covfunct}
    Taking category algebras is not generally functorial. However, it is functorial if we restrict to functors between categories which are injective on objects~\cite[Proposition~2.2.3]{XU2007153}. In particular, a map of posets $P\to Q$ that is injective induces a morphism $I(P)\to I(Q)$ between the associated incidence algebras.
\end{rem}

Recall that a subset $Q\neq \emptyset$ of a poset $(P, \leq)$ is called a \emph{lower ideal} if for all $x\in Q$ and $y\leq x$, 
then also $y\in Q$. If $Q$ is a lower ideal of $P$, an induced map $I(P)\to I(Q)$ is obtained by restriction. Taking incidence algebras does not extend to a functor on the whole category of (finite) posets, not even if we restrict to injective maps as in Remark~\ref{rem:covfunct}. To sidestep this issue, we consider functors with the unique lifting of factorizations~\cite[Section~5]{zbMATH06126476}:

\begin{defn}
    A functor $F\colon \bC\to\bD$ has \emph{unique lifting of factorizations} (ULF) if whenever $f$ is a map of $\bC$ and $F(f)=g_1\circ g_2$ in $\bD$, then there exist unique maps $f_1$ and $f_2$ in $\bC$ such that $f=f_1\circ f_2$, $F(f_1)=g_1$ and $F(f_2)=g_2$.
\end{defn}

\begin{example}
Let $V$ be a finite set of vertices. As it is easy to verify, there is a correspondence:

\begin{itemize}
  \item Given a simplicial complex $K \subseteq \mathcal{P}(V)$ with $\varnothing \in K$, its inclusion
  \[
    i \colon K \hookrightarrow \mathcal{P}(V)
  \]
  is a ULF functor of posets.
  \item Conversely, given a ULF functor $F \colon P \to \mathcal{P}(V)$ from a poset $P$ with an initial element mapped to $\varnothing$, its image
  \[
    K = \operatorname{Im}(F) \subseteq \mathcal{P}(V)
  \]
  is a simplicial complex (a lower-closed subset).
\end{itemize}

Thus:

\begin{quote}
Simplicial complexes $K \subseteq \mathcal{P}(V)$ are exactly the images of ULF functors $F \colon P \to \mathcal{P}(V)$ from posets with initial element mapping to $\varnothing$.
\end{quote}
\end{example}

In the notation of \cite[Section~5]{zbMATH06126476}, there is a category $\mathbf{Cat}^*$ of finite categories and ULF-functors. Let $\mathbf{Poset}^*$ be the full subcategory of $\mathbf{Cat}^*$ consisting of finite posets (seen as categories) and ULF-functors. Then, the induced restriction maps between the associated incidence algebras yield a contravariant functor
\[
{\mathbf{Poset}^*}^\op \to \mathbf{Alg}_k
\]
sending a poset $P$ to its incidence algebra, and sending an ULF-functor (seen as a map of posets) $F\colon Q\to P$ to the map of incidence algebras $F^*\colon  I(P)\to I(Q)$ induced by restriction. 
More explicitly, $F^*(g)(x,y)=g(F(x),F(y))$ for all $g\in I(P)$ and $x,y\in Q$, with $x\leq y$.
\begin{example}
     Let $P$ be a poset and $Q\subseteq P$ a lower ideal. Then, the inclusion map $\iota\colon Q\to P$ is ULF.
\end{example}

The following result was previously shown in  \cite[Section~5.7]{suárezalvarez2007applications}:

\begin{prop}\label{prop:homepiincalg}
    Let $P$ be a poset and $Q\subseteq P$ a lower ideal. Then, the induced map
    \[
    I(P)\to I(Q)
    \]
between incidence algebras is a homological epimorphism.
\end{prop}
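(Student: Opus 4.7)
The plan is to apply Lemma~\ref{lemma:homideals} by exhibiting the kernel $J := \ker(\pi\colon I(P) \to I(Q))$ as a projective, idempotent ideal of $I(P)$. Here $\pi$ is the restriction map induced by the inclusion $Q \hookrightarrow P$; this is a well-defined algebra morphism precisely because $Q$ is a lower ideal, so any chain $x \leq z \leq y$ in $P$ with $x, y \in Q$ automatically has $z \in Q$, and the convolution sums in $I(P)$ and $I(Q)$ match.

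First I would identify $J$ on basis elements. Writing $e_{x,y}$ for the basis element corresponding to a pair $x \leq y$ in $P$, one reads off from the definition that $\pi(e_{x,y}) = e_{x,y}$ when both $x, y \in Q$ and $\pi(e_{x,y}) = 0$ otherwise. The lower-ideal hypothesis implies that $y \in Q$ together with $x \leq y$ forces $x \in Q$; hence the a priori two-sided vanishing condition ``$x \notin Q$ or $y \notin Q$'' collapses to ``$y \notin Q$''. This yields the clean description
\[
J \;=\; \bigoplus_{\substack{x \leq y \text{ in } P \\ y \notin Q}} k\, e_{x,y}.
\]

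Next I would decompose $J$ as a right $I(P)$-module using the complete orthogonal idempotents $\{e_y := e_{y,y} : y \in P\}$, with $\sum_y e_y = 1$. A direct computation in the category algebra gives $e_y \cdot I(P) = \mathrm{span}_k\{e_{x,y} : x \leq y\}$, and collecting these over $y \notin Q$ recovers $J$ on the nose:
\[
J \;=\; \bigoplus_{y \notin Q} e_y \cdot I(P).
\]
Each summand is a direct summand of the right regular module $I(P) = \bigoplus_{y \in P} e_y \cdot I(P)$, hence projective, so $J$ is projective as a right $I(P)$-module. For the idempotency $J^2 = J$, observe that whenever $y \notin Q$ we have $e_y \in J$ and $e_y \cdot e_{x,y} = e_{x,y}$ in $I(P)$; this exhibits every basis element of $J$ as a product of two elements of $J$, proving $J \subseteq J^2$ (the reverse inclusion is automatic).

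With both hypotheses of Lemma~\ref{lemma:homideals} verified, the conclusion that $\pi$ is a homological epimorphism follows. The one genuine subtlety is the very first step: the lower-ideal hypothesis is what collapses the two-sided condition on $J$ into a condition on the second coordinate alone, which is precisely what aligns $J$ with the right-module summand $\bigoplus_{y \notin Q} e_y \cdot I(P)$ of $I(P)$. Once this alignment is secured, both projectivity and idempotency are immediate combinatorial consequences of the category-algebra structure, and no further computation with $\Tor$ or $\Ext$ is needed beyond what Lemma~\ref{lemma:homideals} already packages.
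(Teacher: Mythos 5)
Your proof is correct and takes essentially the same route as the paper: the paper likewise identifies the kernel with the (idempotent, projective) ideal generated by the idempotent $e=\sum_{y\in P\setminus Q}e_{y,y}$ and then invokes Lemma~\ref{lemma:homideals}. Your version merely makes explicit the basis description of the kernel and the decomposition $\ker\pi=\bigoplus_{y\notin Q}e_y\cdot I(P)$, which is the same ideal written as a summand of the regular module.
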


\begin{proof}
    The kernel of the map $I(P)\to I(Q)$ is generated by all paths in $P$ which go through a vertex in $P\setminus Q$. As $Q\subseteq P$ is a lower ideal, this implies that it is in fact generated by all paths starting at an object of $Q$. If $e$ denotes the idempotent $e\coloneqq \sum_{x\in P\setminus Q} x$, then the kernel of the map $I(P)\to I(Q)$ can be identified with the ideal of $I(P)$ generated by $e$. In particular, it is idempotent and projective as an $I(P)$-module. The result now follows from Lemma~\ref{lemma:homideals}.
\end{proof}

Recall that, given a finite simplicial complex $\Sigma$, the face poset $F(\Sigma)$ of $\Sigma$ is the poset on the set of simplices of $\Sigma$ with partial relation induced by containment: $\sigma \leq \tau$ if and only if $\tau$ is a face of $\sigma$ in $\Sigma$. A simplicial map $\alpha \colon \Sigma\to\Sigma'$ induces an order-preserving map $F(\alpha) \colon F(\Sigma)\to F(\Sigma')$. Furthermore, the image of $F(\alpha)$ is a lower ideal of  $F(\Sigma')$ because they are face posets of simplicial complexes. 
We now consider incidence algebras arising from the face posets of simplicial complexes. For simplicity, we denote by $I(\Sigma)$ the incidence algebra of the face poset of~$\Sigma$. If $\Sigma\subseteq \Sigma'$ is a subcomplex, then we get a map $I(\Sigma')\to I(\Sigma)$ and a  functor
\begin{equation}\label{eq:functorI}
I\colon \mathbf{SimpComp}^{\op}\to \mathbf{Alg}_k  
\end{equation}
from the (opposite) category of finite simplicial complexes and injective simplicial maps, to $k$-algebras. By Proposition~\ref{prop:homepiincalg}, the functor $I$ refines to a functor  $\mathbf{SimpComp}^{\op}\to \Alg_k^\Hepi$.

Hochschild cohomology of incidence algebras of simplicial complexes and simplicial cohomology are closely related~\cite{GERSTENHABER1983143,zbMATH04118524, zbMATH06676099}; we recall the precise statement:

\begin{thm}\label{thm:HHis SC}
    There is a functor $\Sigma\mapsto I(\Sigma)$ from finite simplicial complexes to associative unital $k$-algebras inducing an isomorphism
    \[
    \H^*(\Sigma;k)\cong \HH^*(I(\Sigma),I(\Sigma))
    \]
    between the simplicial cohomology of $\Sigma$ and Hochschild cohomology of $I(\Sigma)$. 
\end{thm}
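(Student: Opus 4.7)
The plan is to reduce the statement to the classical Gerstenhaber--Schack identification for incidence algebras of posets, and then exploit the fact that the nerve of the face poset of $\Sigma$ is its barycentric subdivision.

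First, I would recall that for any finite poset $P$, there is a natural isomorphism
\[
\HH^*(I(P), I(P)) \cong \H^*(\Nerve(P); k) \ ,
\]
with $\Nerve(P)$ the order complex of $P$. The standard argument is via Cibils' minimal projective resolution of $I(P)$ over $I(P)^e$: the associated Hochschild cochain complex, after reduction through the primitive idempotents indexed by the elements of $P$, becomes canonically isomorphic to the simplicial cochain complex $C^*(\Nerve(P); k)$, with the Hochschild coboundary matching the alternating face operators on $\Nerve(P)$. Specializing to $P = F(\Sigma)$ and noting that $\Nerve(F(\Sigma)) = \mathrm{Sd}(\Sigma)$ by direct inspection (an $n$-simplex is a strict chain of simplices of $\Sigma$), one obtains
\[
\HH^*(I(\Sigma), I(\Sigma)) \cong \H^*(\mathrm{Sd}(\Sigma); k) \cong \H^*(\Sigma; k) \ ,
\]
the last isomorphism being the classical barycentric-subdivision invariance coming from the canonical homeomorphism $|\mathrm{Sd}(\Sigma)| \cong |\Sigma|$.

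For the functoriality clause, I would use the preceding setup: an injective simplicial map $\alpha\colon \Sigma \hookrightarrow \Sigma'$ induces an order-preserving, ULF injection of face posets whose image is a lower ideal of $F(\Sigma')$. By Proposition~\ref{prop:homepiincalg}, the associated algebra map $I(\Sigma') \to I(\Sigma)$ is a surjective homological epimorphism, so by Proposition~\ref{prop:coherentdiagrams} the assignment $\Sigma \mapsto \HH^*(I(\Sigma), I(\Sigma))$ becomes a (contravariant) functor on injective simplicial maps. Naturality of the Gerstenhaber--Schack identification with respect to ULF poset maps, together with naturality of the barycentric-subdivision comparison, then shows that the isomorphism above intertwines $\alpha^*$ on simplicial cohomology with the induced Hochschild cohomology map.

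The main obstacle I expect is not the bijective identification of cochain complexes, which is combinatorial and well-documented, but rather verifying that all these identifications are genuinely natural with respect to the algebra functor $I$. In particular, one has to track how the restriction/extension-of-scalars morphism between Cibils' resolutions for $I(\Sigma')$ and $I(\Sigma)$ corresponds, after passage to cohomology, to the pullback $\mathrm{Sd}(\alpha)^*$ on simplicial cochains, and from there to $\alpha^*$ under the subdivision equivalence. This naturality bookkeeping is what ultimately upgrades the isomorphism from a statement about individual complexes to a natural transformation of functors, and it is the step that genuinely uses the homological epimorphism framework developed in Section~\ref{section:homologicalepi}.
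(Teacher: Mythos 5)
The paper does not actually prove this statement: Theorem~\ref{thm:HHis SC} is recalled verbatim from the cited literature (Gerstenhaber--Schack and the follow-up references), so there is no in-paper argument to compare against. Your proposal is a correct reconstruction of the standard proof from those sources, and it is worth recording how it relates to them. The identification $\HH^*(I(P),I(P))\cong \H^*(\Nerve(P);k)$ for a finite poset $P$ is exactly the Cibils/Gerstenhaber--Schack computation: replacing the bar resolution by the bar resolution relative to the separable subalgebra $E=\prod_{x\in P}k e_x$, the cochain group $\Hom_{E^e}(\bar{A}^{\otimes_E n},A)$ has basis indexed by strict chains $x_0<\cdots<x_n$ (each mapping into the one-dimensional space $e_{x_0}Ae_{x_n}$), and the Hochschild coboundary becomes the alternating sum of face maps; this gives precisely $C^*(\Nerve(P);k)$. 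One small correction: this relative bar resolution is not the \emph{minimal} resolution of $I(P)$ over $I(P)^e$, so you should not call it that, but nothing in your argument depends on minimality. Specializing to $P=F(\Sigma)$, using $\Nerve(F(\Sigma))=\mathrm{Sd}(\Sigma)$ and subdivision invariance, is exactly how the original proof concludes. Your functoriality discussion goes slightly beyond what the cited sources state, by rederiving naturality from the homological-epimorphism machinery of Section~\ref{section:homologicalepi} (Propositions~\ref{prop:homepiincalg} and~\ref{prop:coherentdiagrams}); this is coherent with the paper's framework and in fact matches how the map $\HH^*(I(\Sigma'))\to\HH^*(I(\Sigma))$ is realized on the reduced complexes, namely as restriction of cochains from chains in $F(\Sigma')$ to chains in $F(\Sigma)$, i.e.\ as $\mathrm{Sd}(\alpha)^*$. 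Note, however, that where the paper uses Theorem~\ref{thm:HHis SC} (the proof of Theorem~\ref{thm:towardshighPH}) only the isomorphism for the single complex $\Sigma_n$ is needed, so the naturality bookkeeping you flag as the main obstacle, while worth doing, is not load-bearing for the paper's applications.
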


In the same notation of  Example~\ref{ex:[n]}, let $[n]$ be the category with objects the numbers $0,\dots, n$ and morphisms $i\to j$ if and only if $i\leq j$. As customary in topological data analysis, we are interested in filtrations of simplicial complexes;

\begin{defn}
    An $[n]$-indexed filtration in a category $\bD$ is a functor $\mathcal{F}\colon [n]\to \bD $.
\end{defn}

Recall that $\mathbf{SimpComp}$ denotes the category of finite simplicial complexes and injective simplicial maps.
If ${\Sigma}\colon [n]\to \mathbf{SimpComp}$, $i\mapsto \Sigma_i$, is a filtration in simplicial complexes,  we can apply the functor $I$ of Eq.~\eqref{eq:functorI}, getting by composition a diagram $\mathcal{I}$ of $k$-algebras; for each $i\in [n]$, we have an incidence $k$-algebra $\mathcal{I}(i)=I(\Sigma_i)$, and for each $i\leq j$ a functorial restriction map $I(\Sigma_j)\to I(\Sigma_i)$. For simplicity, we refer to such a functor as a \emph{diagram in incidence algebras}.

\begin{thm}\label{thm:towardshighPH}
    Let $\mathcal{I}\colon [n]^{\op}\to \mathbf{Alg}_k$ be a diagram in incidence algebras of finite simplicial complexes with $\mathcal{I}(i)\coloneqq I(\Sigma_i)$. Then,  we have 
    \[
    \HH^q_{\GS}(\mathcal{I}, \mathcal{I})\cong \H^q(\Sigma_n;k)
    \]
    for all $q\geq 0$. 
\end{thm}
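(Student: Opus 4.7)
The plan is to assemble the tools of Sections~\ref{section:diagrammaticHH} and~\ref{section:homologicalepi} into a direct computation, and then conclude by the classical Gerstenhaber--Schack identification of Theorem~\ref{thm:HHis SC}. The two observations that drive the argument are that the indexing category $[n]$ has a terminal object, namely $n$, and that the presheaf $\mathcal{I}$ takes values in surjective homological epimorphisms; Corollary~\ref{cor:spseqterminal} then forces the associated spectral sequence to collapse at the column $p=0$.

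First, I would verify that $\mathcal{I}\colon [n]^{\op}\to\Alg_k$ factors through $\Alg_k^{\Hepi}$ and sends every morphism to a surjection. For $i\le j$ in $[n]$, the inclusion $\Sigma_i\hookrightarrow\Sigma_j$ exhibits the face poset $F(\Sigma_i)$ as a lower ideal of $F(\Sigma_j)$, and Proposition~\ref{prop:homepiincalg} then produces a homological epimorphism $\mathcal{I}(j\le i)\colon I(\Sigma_j)\to I(\Sigma_i)$. That this restriction map is also surjective is immediate from the basis description of incidence algebras: every basis element of $I(\Sigma_i)$, indexed by an interval of $F(\Sigma_i)$, lifts to the corresponding basis element of $I(\Sigma_j)$ through the inclusion $F(\Sigma_i)\subseteq F(\Sigma_j)$.

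Next, I would apply Corollary~\ref{cor:spseqterminal} with $\bC=[n]$ and terminal object $\bar c=n$. This yields that the spectral sequence $IE_r^{*,*}$ of Notation~\ref{not:spseq} has $E_2$-page concentrated on the axis $p=0$, with
\[
IE_2^{0,q}\cong \HH^q\bigl(I(\Sigma_n),I(\Sigma_n)\bigr),
\]
and all other entries vanishing. Hence the spectral sequence collapses, producing an isomorphism $\HH_{\GS}^q(\mathcal{I},\mathcal{I})\cong\HH^q(I(\Sigma_n),I(\Sigma_n))$ for every $q\ge 0$. Finally, Theorem~\ref{thm:HHis SC} identifies the right-hand side with $\H^q(\Sigma_n;k)$, yielding the claim.

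There is no genuine obstacle in the argument: the difficulty lies entirely in having set up the preceding machinery. The only item that requires care is reconciling conventions when invoking the cited results, namely the direction of the restriction maps $I(\Sigma_j)\to I(\Sigma_i)$, the use of the terminal object of $[n]$ rather than the initial object of $[n]^{\op}$, and the identification of subcomplexes with lower ideals of face posets needed for Proposition~\ref{prop:homepiincalg}. Once these matchings are verified, the proof is a direct combination of Proposition~\ref{prop:homepiincalg}, Corollary~\ref{cor:spseqterminal}, and Theorem~\ref{thm:HHis SC}.
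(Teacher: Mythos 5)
Your proposal is correct and follows essentially the same route as the paper: verify via Proposition~\ref{prop:homepiincalg} that the restriction maps are (surjective) homological epimorphisms, invoke the collapse of the spectral sequence at the terminal object $n$ (the paper cites Corollary~\ref{cor:ssseqwithlims} and then the terminal-object observation, which is exactly the content of Corollary~\ref{cor:spseqterminal} that you use directly), and conclude with Theorem~\ref{thm:HHis SC}. The only difference is that you make the surjectivity check explicit, which the paper leaves implicit.
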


\begin{proof}
    The diagram $\mathcal{I}$ is a diagram of incidence algebras of finite simplicial complexes and injective simplicial maps. Therefore, each inclusion $\Sigma_i\to \Sigma_j$ induces an epimorphism between the associated incidence algebras. Furthermore, the induced maps $\mathcal{I}(j)\to \mathcal{I}(i) $ are homological epimorphisms by Proposition~\ref{prop:homepiincalg}. Hence, we can apply Corollary~\ref{cor:ssseqwithlims}. Since the category~$[n]$ has a terminal object~$n$, the second page of the  associated spectral sequence $IE_2^{p,q}$ is given by 
    \[
    IE_2^{p,q}=
    \begin{cases}
    0 &\text{ if } p>0\\
    \HH^q(I(\Sigma_n),I(\Sigma_n)) &\text{ otherwise} 
    \end{cases}
    \]
    and converges to the diagrammatic Hochschild cohomology of $\mathcal{I}$. The obtained  Hochschild cohomology groups are identified with the standard simplicial cohomology groups of $\Sigma_n$ by Theorem~\ref{thm:HHis SC}, and the statement follows.
\end{proof}

Arguing as above, we get the following, more general, statement:

\begin{thm}\label{thm:terminal}
    Let $\bC$ be a finite category with terminal object $\bar{c}$, and let  $\mathcal{I}=I\circ \Sigma\colon \bC^\op \to \Alg^{\Hepi}_k$ be a diagram of incidence algebras of finite simplicial complexes (and injective simplicial maps). Then,  
        \[
    \HH^q_{\GS}(\mathcal{I}, \mathcal{I})\cong \H^q(\Sigma(\bar{c});k)
    \]
    for all $q\geq 0$.
\end{thm}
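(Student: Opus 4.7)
The plan is to reduce this statement to the two main results already developed, namely Corollary~\ref{cor:spseqterminal} (collapse of the spectral sequence when the indexing category has a terminal object) and Theorem~\ref{thm:HHis SC} (identification of Hochschild cohomology of an incidence algebra of a simplicial complex with simplicial cohomology). The strategy is essentially the same as in the proof of Theorem~\ref{thm:towardshighPH}, but carried out for an arbitrary finite category with a terminal object rather than for $[n]$.

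First I would verify that $\mathcal{I}$ actually lands in $\Alg_k^{\Hepi}$ with surjective transition maps. For each morphism $c\to d$ in $\bC$, the simplicial map $\Sigma(d)\to \Sigma(c)$ is injective, so the image is a subcomplex, and under the face poset functor we get a lower ideal inclusion. By Proposition~\ref{prop:homepiincalg}, the induced restriction map of incidence algebras $I(\Sigma(c))\to I(\Sigma(d))$ is a homological epimorphism; it is moreover surjective (in particular an epimorphism in the sense of Example~\ref{ex:surjepi}), because restriction of functions to a lower ideal is surjective. Hence $\mathcal{I}\colon \bC^\op\to\Alg_k^{\Hepi}$ is a surjective homological epimorphism.

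Second, I would invoke Corollary~\ref{cor:spseqterminal}. Since $\bC$ has terminal object $\bar{c}$, the second page of the spectral sequence $IE_2^{p,q}$ from Theorem~\ref{thm:spseqtoHH} is concentrated in the column $p=0$, where it reads
\[
IE_2^{0,q}\cong \HH^q(\mathcal{I}(\bar{c}),\mathcal{I}(\bar{c}))=\HH^q(I(\Sigma(\bar{c})),I(\Sigma(\bar{c}))).
\]
Thus the spectral sequence collapses at the $E_2$-page, and the abutment yields $\HH^q_{\GS}(\mathcal{I},\mathcal{I})\cong \HH^q(I(\Sigma(\bar{c})),I(\Sigma(\bar{c})))$.

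Finally I would apply Theorem~\ref{thm:HHis SC} to identify this with the simplicial cohomology $\H^q(\Sigma(\bar{c});k)$, completing the proof. There is no real obstacle here, since every ingredient has been put in place: the only point requiring a moment of care is checking that the transition maps are indeed surjective homological epimorphisms (so that Corollary~\ref{cor:spseqterminal}, rather than just Theorem~\ref{thm:spseqtoHH}, applies), but this follows immediately from Proposition~\ref{prop:homepiincalg} together with the observation that images of injective simplicial maps are lower ideals of the target face poset.
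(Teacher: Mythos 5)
Your proposal is correct and follows exactly the route the paper intends: the paper proves Theorem~\ref{thm:towardshighPH} by combining Proposition~\ref{prop:homepiincalg}, Corollary~\ref{cor:ssseqwithlims}/\ref{cor:spseqterminal}, and Theorem~\ref{thm:HHis SC}, and then states Theorem~\ref{thm:terminal} with the remark ``arguing as above,'' which is precisely your argument for a general finite category with terminal object. The only quibble is a harmless directional slip in your first paragraph (for $c\to d$ in $\bC$ the simplicial inclusion is $\Sigma(c)\hookrightarrow\Sigma(d)$ and the restriction map is $I(\Sigma(d))\to I(\Sigma(c))$), which does not affect the substance since your conclusion correctly evaluates at the terminal object.
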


\subsection{Incidence algebras and colimits.}\label{sec:colim}

In this section, we compare the colimit of a diagram of simplicial complexes with the limit of the associated incidence algebras.

Let $P$ be a finite poset and $\Sigma\colon P\to \mathbf{SimpComp}$ be a diagram of finite simplicial complexes with $\Sigma_p\coloneqq\Sigma(p)$ and  $\Sigma_{pq}\coloneqq\Sigma(p\leq q)\colon \Sigma_p\hookrightarrow \Sigma_q$ injective maps. 
It is easy to see that the colimit \(\operatorname{colim}_P \Sigma\) exists  and is again a finite simplicial complex. In fact, the category of simplicial complexes is cocomplete, and because \(P\) is finite, one may linearly order its elements and build the colimit inductively as a sequence of pushouts:
each step glues one finite complex along a finite subcomplex (the image of its predecessors under the injective maps).
Pushouts of finite simplicial complexes along finite subcomplexes remain finite,
so by induction, the resulting colimit is finite. For completeness, we also provide a more constructive proof.

First, consider the disjoint union of all complexes in the diagram:
\[
\bigsqcup_{p \in P} \Sigma_p \ .
\]
Define an equivalence relation \(\sim\) generated by the identifications
\[
x \sim \Sigma(p \le q)(x) \qquad \text{for all } (p \le q) \in P,\; x \in \Sigma_p \ .
\]
Since every \(\Sigma(p \le q)\) is injective, this relation never identifies distinct points inside the same \(\Sigma_p\), but only glues points of different complexes along the diagram morphisms.
Define the set of vertices of the candidate colimit \(K\) as
\[
V := \left(\bigsqcup_{p \in P} V(\Sigma_p)\right)\big/ \sim \ ,
\]
where $V(\Sigma_p)$ denotes the set of vertices of $\Sigma_p$.
If \(\sigma = \{v_0, \dots, v_k\}\) is a \(k\)-simplex in some \(\Sigma_p\), we set \([\sigma] := \{[v_0], \dots, [v_k]\} \subseteq V\) and define
\[
S(K) := \{\, [\sigma] \mid \sigma \text{ is a simplex of some } \Sigma_p \,\} \ .
\]
This is well-defined because the diagram maps are simplicial: if \(\sigma\) is a simplex in \(\Sigma_p\) and \(p \le q\), then \(\Sigma(p \le q)(\sigma)\) is a simplex in \(\Sigma_q\), and these are identified under \(\sim\).
Moreover, \(S(K)\) is closed under taking faces: if \([\sigma] \in S(K)\) and \(\tau\) is a face of \(\sigma\) in some \(\Sigma_p\), then \([\tau] \in S(K)\).
Hence \(K = (V, S(K))\) is a simplicial complex.

For each \(p \in P\) there is a canonical inclusion
\[
\iota_p \colon\Sigma_p \longrightarrow K
\]
sending a vertex \(v \in \Sigma_p\) to its equivalence class \([v]\) and it is extended simplicially.
These maps satisfy the relation \(\iota_q \circ \Sigma(p \le q) = \iota_p\).
Now let \(L\) be a simplicial complex and \(\phi_p \colon \Sigma_p \to L\) a compatible cone such that
\(\phi_q \circ \Sigma(p \le q) = \phi_p\).
Define \(\Phi \colon K \to L\) on vertices by
\[
\Phi([v]) := \phi_p(v) \qquad (v \in V(\Sigma_p)) \ .
\]
This is well defined because if \(v \sim \Sigma(p \le q)(v)\), then \(\phi_q(\Sigma(p \le q)(v)) = \phi_p(v)\).
Extend \(\Phi\) simplicially.
Then \(\Phi \circ \iota_p = \phi_p\), and \(\Phi\) is unique with this property. That is,  \(K\) satisfies the universal property of the colimit of \(\Sigma\). Furthermore, observe that:

\begin{itemize}
    \item The vertex set \(V\) of \(K\) is a quotient of the finite disjoint union \(\bigsqcup_{p\in P} V(\Sigma_p)\).  
    Since each \(V(\Sigma_p)\) is finite and \(P\) is finite, this union is finite.  
    Therefore \(V\) is finite.
    \item Similarly, the set of simplices \(S(K)\) is a quotient of the finite disjoint union \(\bigsqcup_{p\in P} S(\Sigma_p)\), hence finite.
\end{itemize}

Therefore \(K\) has finitely many vertices and finitely many simplices, so it is a finite simplicial complex.

\medskip
We now come back to the relation between the colimit of a diagram of simplicial complexes and the limit of the associated incidence algebras. Let \(P\) be a finite poset and 
$\Sigma\colon P\to \mathbf{SimpComp}$ is a functor to the category of finite simplicial complexes
and injective simplicial maps. Let $K$ be its colimit.
For each \(p\in P\) we have an injective simplicial map
\[
i_p \colon \Sigma_p \hookrightarrow K,
\]
which induces an injective order-embedding
\[
\iota_p \colon {F}(\Sigma_p) \hookrightarrow {F}(K)
\]
between the corresponding face posets.
Let \(I(X)\) denote the incidence algebra (over \(k\)) of a finite poset~\(X\).
For each relation \(p\le q\) in \(P\), the inclusion
\({F}(\Sigma_p)\hookrightarrow {F}(\Sigma_q)\)
induces a restriction homomorphism
\[
r_{q\to p} \colon I({F}(\Sigma_q)) \longrightarrow I({F}(\Sigma_p))\ ,
\]
so that we obtain a contravariant diagram
\[
\Sigma^* \colon P^{\op} \longrightarrow \mathbf{Alg}_k\ .
\]
Restriction along \(\iota_p\) gives a map 
\[
\rho_p \colon I({F}(K)) \longrightarrow I({F}(\Sigma_p)) \ .
\]

\begin{lemma}
If \(\sigma \le \tau\) in \({F}(K)\), then there exists \(p\in P\) such that
\(\tau \in \iota_p({F}(\Sigma_p))\),
and for this \(p\) the entire interval
\([\sigma,\tau] = \{x : \sigma\le x\le \tau\}\)
is contained in \(\iota_p(F(\Sigma_p))\).
\end{lemma}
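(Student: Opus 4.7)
The plan is to choose $p$ so that $\sigma$ itself lifts to $\Sigma_p$, and then exploit the fact that $\iota_p(F(\Sigma_p))$ is automatically closed under taking faces in $F(K)$. First, I would observe that every simplex of $K$ arises as the image under some $\iota_p$ of a simplex of $\Sigma_p$: this is immediate from the defining description $S(K)=\{[\sigma]\mid \sigma \text{ a simplex of some } \Sigma_p\}$ given in the colimit construction above. Applying this to our $\sigma$, there exist $p\in P$ and $\sigma'\in F(\Sigma_p)$ with $\iota_p(\sigma')=\sigma$, and this is the index $p$ I propose as witness for the lemma.

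Next, I would show that for this choice of $p$, every $x\in F(K)$ with $\tau\subseteq x\subseteq \sigma$ (equivalently, $\sigma\le x\le \tau$ in the face-poset order) lies in $\iota_p(F(\Sigma_p))$. The key ingredient is that $\iota_p$ is injective on vertices --- this was asserted in the construction of $K$ under the hypothesis that all diagram maps are injective --- and therefore restricts to a bijection from $V(\sigma')$ onto $V(\sigma)$. Given such an $x$, the vertex set $V(x)$ is a subset of $V(\sigma)$, hence lifts uniquely to a subset $V(x')\subseteq V(\sigma')$. Since $\Sigma_p$ is a simplicial complex and $\sigma'$ one of its simplices, every subset of $V(\sigma')$ is itself a simplex of $\Sigma_p$; thus $x'\in F(\Sigma_p)$ and $\iota_p(x')=x$. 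Specializing to $x=\tau$ yields $\tau\in\iota_p(F(\Sigma_p))$, and running the same argument for arbitrary $x\in[\sigma,\tau]$ gives the second claim.

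The only delicate point is the injectivity of $\iota_p$ on vertices, equivalently the assertion that the equivalence relation $\sim$ in the colimit construction does not identify two distinct vertices of the same $\Sigma_p$. This was stated in the construction as a consequence of the injectivity of all the $\Sigma(p\le q)$, so I would simply invoke it rather than reprove it here. Once that point is taken on board, the rest of the argument is entirely formal: closure of the simplicial complex $\Sigma_p$ under faces promotes the vertex-level inclusion $V(x')\subseteq V(\sigma')$ into a genuine simplex of $\Sigma_p$ lifting $x$, and the injectivity of $\iota_p$ guarantees that this lift is unique.
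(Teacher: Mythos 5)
Your proof is correct and follows essentially the same line as the paper's: lift one endpoint of the interval to some $\Sigma_p$ and then use closure of the simplicial complex $\Sigma_p$ under faces (together with injectivity of $\iota_p$ on vertices) to capture the whole interval. The only difference is one of care rather than substance: under the paper's stated convention that $\sigma\le\tau$ means $\tau\subseteq\sigma$, the interval $[\sigma,\tau]$ consists of faces of $\sigma$, so lifting $\sigma$ (as you do) is the right choice, whereas the paper's one-line proof speaks of lifting $\tau$ and its faces; your version resolves this small inconsistency and spells out the unique-lifting step at the vertex level.
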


\begin{proof}
    Every simplex \(\tau\) in the colimit \(K\) comes from some \(\Sigma_p\)
since the diagram maps are injective, and we form a colimit involving injective simplicial maps.
All faces of \(\tau\) lie in the same \(\Sigma_p\),
hence so does the whole interval \([\sigma,\tau]\).
\end{proof}

This lemma shows that every interval in \(F(K)\) is already
an interval inside some \(F(\Sigma_p)\).
Consequently, the family \((\rho_p)_p\) defines a homomorphism
\[
\Theta \colon I(F(K)) \longrightarrow \varprojlim\nolimits_{p\in P} I(F(\Sigma_p)) \ .
\]
\begin{prop}\label{prop:incid_lim}  
For a diagram $\Sigma\colon P\to \mathbf{SimpComp}$ of finite simplicial complexes with injective simplicial maps,
the inverse limit $\varprojlim\nolimits_{p\in P} I(F(\Sigma_p))$ of the corresponding incidence algebras (with restriction maps)
is canonically isomorphic, as a \(k\)-algebra, to the incidence algebra $I(F(K)) $ of the colimit complex.
\end{prop}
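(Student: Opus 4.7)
The plan is to verify that $\Theta$ is a bijection and that it respects the convolution product. The essential input, already recorded in the preceding lemma, is the locality statement: every interval $[\sigma,\tau]$ of $F(K)$ lies entirely inside $\iota_p(F(\Sigma_p))$ for some $p\in P$. This localization is what allows finite sums and evaluations in $I(F(K))$ to be computed inside a single $I(F(\Sigma_p))$.

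Injectivity of $\Theta$ is immediate: if $\Theta(f)=0$, then $\rho_p(f)=0$ for every $p\in P$, and for any interval $[\sigma,\tau]$ in $F(K)$ the lemma provides some $p$ together with a unique preimage $[\tilde\sigma,\tilde\tau]$ in $F(\Sigma_p)$, so that $f([\sigma,\tau])=\rho_p(f)([\tilde\sigma,\tilde\tau])=0$; hence $f=0$.

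For surjectivity, given a compatible family $(f_p)\in\varprojlim_{p\in P} I(F(\Sigma_p))$, I would define $f\in I(F(K))$ on each interval $[\sigma,\tau]$ by choosing any $p$ with $[\sigma,\tau]\subseteq \iota_p(F(\Sigma_p))$ and setting $f([\sigma,\tau]):=f_p([\tilde\sigma,\tilde\tau])$, where $[\tilde\sigma,\tilde\tau]$ is the unique preimage of $[\sigma,\tau]$ under the injective map $\iota_p$. The substantive point is independence from the choice of $p$: two lifts of the same interval of $F(K)$ need not live over comparable indices of $P$, but because $K$ is built as the quotient by the equivalence relation generated by the diagram maps, any two such lifts are joined by a finite zigzag of comparabilities in $P$ and corresponding injective inclusions $F(\Sigma_{p_i})\hookrightarrow F(\Sigma_{q_i})$ matching the intermediate lifts. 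Along each step, the limit relation $r_{q\to p}(f_q)=f_p$ for $p\le q$ forces the consecutive $f_{p_i}$'s to agree on the matched intervals; iteration along the zigzag yields the desired independence and hence a well-defined $f$ with $\rho_p(f)=f_p$ for all $p\in P$.

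Finally, to see that $\Theta$ is an algebra homomorphism it suffices to check that the inverse just constructed is compatible with the convolution $(f\ast g)([\sigma,\tau])=\sum_{\sigma\le\rho\le\tau}f([\sigma,\rho])g([\rho,\tau])$. This is a finite sum over elements of the interval $[\sigma,\tau]$, which by the locality lemma lifts wholesale to $F(\Sigma_p)$; the sum therefore agrees with the convolution computed inside $I(F(\Sigma_p))$, and multiplicativity of $\Theta$ follows. The main technical obstacle is the zigzag well-definedness in the surjectivity step: since $P$ is not assumed filtered, incomparable lifts can genuinely occur, and the bookkeeping must be traced through the vertex-level equivalence defining the colimit and then transferred to whole intervals using the injectivity of all the structure maps.
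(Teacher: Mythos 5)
Your proposal is correct and follows essentially the same route as the paper's proof: injectivity and multiplicativity via the interval-locality lemma, and surjectivity by defining the inverse interval-by-interval with well-definedness established through a finite zigzag of elementary identifications in the colimit, transported by the compatibility relations $r_{q\to p}(f_q)=f_p$. Your explicit flagging of the zigzag well-definedness as the substantive point (since $P$ need not be filtered) matches, and slightly sharpens, the corresponding paragraph in the paper's argument.
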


\begin{proof}
    We show that $\Theta \colon I(F(K)) \longrightarrow \varprojlim\nolimits_{p\in P} I(F(\Sigma_p))$ is an isomorphism of $k$-algebras. We have already shown that it is a homomorphism. It is to be shown that it is injective and surjective.

\textbf{Injectivity.}
If \(f\neq g\in I(F(K))\),
pick an interval \([\sigma,\tau]\) with \(f(\sigma,\tau)\neq g(\sigma,\tau)\).
Choose \(p\) with \(\tau\in \iota_p(F(\Sigma_p))\);
then \([\sigma,\tau]\subset \iota_p(F(\Sigma_p))\),
so \(\rho_p(f)\neq\rho_p(g)\).
Thus \(\Theta(f)\neq \Theta(g)\).

\medskip
\textbf{Surjectivity.}
Given a compatible family \((f_p)_p \in \varprojlim I(F(\Sigma_p))\),
define \(f\in I(\mathcal{F}(K))\) by
\[
f(\sigma,\tau) := f_p(\sigma,\tau)
\quad\text{for any }p\text{ such that }\tau\in \iota_p(F(\Sigma_p)).
\]

This is well-defined because if $(x_p,y_p)$ and $(x_q,y_q)$ both represent the same interval $([\sigma],[\tau])$ in $F(K)$, they are connected by a finite zig-zag of images and preimages along the simplicial maps in the diagram $\Sigma$.
Along each step $r\leq s$ of that zig-zag, the compatibility condition in the limit $\Sigma^*_{r,s}(f_s) = f_r$ is what we need to transport equalities of values along the chain.

The convolution product is respected:
for \(\sigma\le\tau\), all chains
\(\sigma=x_0\le\cdots\le x_m=\tau\)
lie inside the same \(F(\Sigma_p)\),
so \((f*g)(\sigma,\tau)\) is computed in \(I(F(\Sigma_p))\)
and agrees with the product there.

Therefore \(\Theta\) is an isomorphism of \(k\)-algebras. 
\end{proof}

\begin{rem}
    The argument relies crucially on the fact that comparability
\(\sigma\le\tau\) in a face poset forces both elements,
and indeed the entire interval, to lie in a single simplicial subcomplex.
This ``interval locality'' may fail for arbitrary posets glued along subposets,
so the statement is special to face posets of simplicial complexes.

We observe here also that whether or not the empty simplex is included in each face poset
does not affect the result, provided the convention is consistent
across the diagram.
\end{rem}

Let $P$ be a finite poset without a terminal object, and $\Sigma$ and $\Sigma^*$ be as above. By using Theorem \ref{thm:terminal}, we can compute the diagrammatic cohomology of the diagram $\hat{\Sigma}^*$ obtained by adding a terminal object to $P$ and decorating it with the limit of $\Sigma^*$, with the obvious maps. One may wonder if this cohomology (which is nothing but the Hochschild cohomology of the limit) is isomorphic or not to the diagrammatic cohomology of the original diagram. In general, they are not isomorphic, as we show in the following example.

\begin{example}
    Consider the poset $P$:
\begin{equation}
 \begin{tikzcd}
\{1\} \arrow[r] \arrow[d] 
& \{4\}  \\
 \{3\}  & \{2\} \arrow[l] \arrow[u]
 \end{tikzcd}    
 \end{equation} 
We set $\Sigma\colon P\to \mathbf{SimpComp}$ given by $\Sigma(p)=\{\ast\}$, the singleton, for all $p$, and $\Sigma_{pq}=\mathrm{id}$, for all $p\leq q$:
\begin{equation}
 \begin{tikzcd}
\{\ast\} \arrow[r] \arrow[d] 
& \{\ast\}  \\
 \{\ast\}  & \{\ast\} \arrow[l] \arrow[u]
 \end{tikzcd}    
 \end{equation} 
Then, using \cite{GERSTENHABER1983143}, it follows that $\HH^n_{\GS}=k$ for $n=0,1$, and $\HH^n_{\GS}={0}$ for $n\geq 2$.

Extend now $P$ to $\hat{P}$ by adding a maximal element:
\begin{equation}    
\begin{tikzcd}
\{1\} \arrow[rr] \arrow[dd] \arrow[dr] 
  &  & \{4\} \arrow[dl] \\
  & \{5\} & \\
\{3\} \arrow[ur] 
  &  & \{2\} \arrow[uu] \arrow[ll] \arrow[ul]
\end{tikzcd}
\end{equation}
We extend $\Sigma $ to $\hat{\Sigma}\colon \hat{P}\to \mathbf{SimpComp}$ in the obvious way, namely

\begin{equation}    
\begin{tikzcd}
\{\ast\} \arrow[rr] \arrow[dd] \arrow[dr] 
  &  & \{\ast\} \arrow[dl] \\
  & \{\ast\} & \\
\{\ast\} \arrow[ur] 
  &  & \{\ast\} \arrow[uu] \arrow[ll] \arrow[ul]
\end{tikzcd}
\end{equation}

Then, again, by using \cite{GERSTENHABER1983143}, it follows that $\HH^n_{\GS}=k$ for $n=0$, and $\HH^n_{\GS}={0}$ for $n\geq 1$.

\end{example}

This example shows that adding a maximal element, or terminal object, may change the diagrammatic Hochschild cohomology groups substantially even in easy cases -- see also Remark~\ref{rem:colimits}. 

\subsection{Further perspectives}
We now come to our motivations and applications to topological data analysis. First, we recall that incidence algebras are objects of great interest in persistent homology and topological data analysis; see, for example, the recent works~\cite{{zbMATH07701447}, zbMATH07787684,zbMATH07844814}. Our motivating question in dealing with diagrams of incidence algebras concerns the existence of higher persistent modules associated with a filtration of simplicial complexes. The bridge between the theory developed in the first sections and persistence relies on the following observation:

\begin{rem}
    The first column of the first page of the spectral sequence $IE_1^{*,*}$ yields the (classical) persistent module associated to the filtration $\Sigma\colon [n]\to \mathbf{SimplCompl}$.
\end{rem} 
Then, Theorem~\ref{thm:towardshighPH} implies that, when turning the page of the spectral sequence in the second page, the only surviving groups are the cohomology groups of the terminal object of the filtration. This is not the case for arbitrary indexing diagrams, where the diagram's topology influences the diagrammatic Hochschild cohomology. In complete analogy with what happens with the first column, one may be tempted to call the other columns the  ``higher persistent cohomology groups'' of the diagram. This perspective leads to the   following question: 

\begin{q}
    What do the higher persistent homology groups describe? Can we describe with a closed formula the cohomology groups appearing on the first and second pages of the spectral sequence?
\end{q}

\subsection*{Acknowledgments}
 
LC~was supported by the Starting Grant 101077154 ``Definable Algebraic Topology'' from the European Research Council of Martino Lupini. LC also acknowledges partial support from INdAM-GNSAGA.
This study was carried out within the FAIR - Future Artificial Intelligence Research and received funding from the European Union Next-GenerationEU (PIANO NAZIONALE DI RIPRESA E RESILIENZA (PNRR) – MISSIONE 4 COMPONENTE 2, INVESTIMENTO 1.3 – D.D. 1555 11/10/2022, PE00000013). 
Work supported by the European Union’s Horizon Europe research and innovation program for the project FINDHR (g.a. 101070212). 
This manuscript reflects only the authors’ views and opinions; neither the European Union nor the European Commission can be considered responsible for them.
\bibliographystyle{alpha}
\bibliography{biblio}

\end{document}